\documentclass[12pt]{amsart}
\usepackage[T1]{fontenc}
\usepackage[latin9]{inputenc}
\usepackage{geometry}
\usepackage{mathrsfs}
\usepackage{amstext}
\usepackage{amsthm}
\usepackage{amssymb}
\usepackage{bbm}

\usepackage[svgnames]{xcolor}
\usepackage[bookmarksnumbered=true]{hyperref} 
\hypersetup{
	colorlinks = true,
	linkcolor = Blue,
	anchorcolor = blue,
	citecolor = Green,
	filecolor = blue,
	urlcolor = FireBrick
}

\usepackage{aliascnt}
\usepackage[nameinlink]{cleveref}

\makeatletter
\numberwithin{equation}{section}
\numberwithin{figure}{section}

\theoremstyle{plain}
\newtheorem{theorem}{Theorem}[section]

\theoremstyle{definition}
\newaliascnt{definition}{theorem}
\newtheorem{definition}[definition]{Definition}
\aliascntresetthe{definition}
\crefname{definition}{Definition}{Definitions}

\theoremstyle{definition}
\newaliascnt{question}{theorem}

\aliascntresetthe{question}
\crefname{question}{Question}{Questions}

\theoremstyle{definition}
\newaliascnt{remark}{theorem}
\newtheorem{remark}[remark]{Remark}
\aliascntresetthe{remark}
\crefname{remark}{Remark}{Remarks}

\theoremstyle{plain}
\newaliascnt{corollary}{theorem}
\newtheorem{corollary}[corollary]{Corollary}
\aliascntresetthe{corollary}
\crefname{corollary}{Corollary}{Corollaries}

\theoremstyle{plain}
\newaliascnt{lemma}{theorem}
\newtheorem{lemma}[lemma]{Lemma}
\aliascntresetthe{lemma}
\crefname{lemma}{Lemma}{Lemmas}

\theoremstyle{plain}
\newaliascnt{proposition}{theorem}
\newtheorem{proposition}[proposition]{Proposition}
\aliascntresetthe{proposition}
\crefname{proposition}{Proposition}{Propositions}

\theoremstyle{definition}
\newaliascnt{example}{theorem}

\aliascntresetthe{example}
\crefname{example}{Example}{Examples}

\theoremstyle{definition}
\newaliascnt{assumption}{theorem}

\aliascntresetthe{assumption}
\crefname{assumption}{Assumption}{Assumptions}

\theoremstyle{definition}
\newaliascnt{problem}{theorem}

\aliascntresetthe{problem}
\crefname{problem}{Problem}{Problems}

\theoremstyle{definition}
\newaliascnt{claim}{theorem}

\aliascntresetthe{claim}
\crefname{claim}{Claim}{Claims}

\newtheorem*{definition*}{Definition}
\newtheorem*{proposition*}{Proposition} 
\newtheorem*{remark*}{Remark}
\newtheorem*{example*}{Example}
\newtheorem*{problem*}{Problem}
\newtheorem*{question*}{Question}
\newtheorem*{theorem*}{Theorem}
\newtheorem*{lemma*}{Lemma}
\newtheorem*{corollary*}{Corollary}
\newtheorem*{Acknowledgments*}{Acknowledgments}

\renewenvironment{proof}[1][\proofname]{\medskip \noindent {\bfseries #1. }}{\hfill \qedsymbol\medskip}

\MakeRobust{\ref}
\newcommand{\labeltext}[2]{
\@bsphack
\csname phantomsection\endcsname
\def\@currentlabel{#1}{\label{#2}}
\@esphack
}

\usepackage{scalerel}
\usepackage[usestackEOL]{stackengine}
\def\dashint{\,\ThisStyle{\ensurestackMath{%
  \stackinset{c}{.2\LMpt}{c}{.5\LMpt}{\SavedStyle-}{\SavedStyle\phantom{\int}}}%
  \setbox0=\hbox{$\SavedStyle\int\,$}\kern-\wd0}\int}

\usepackage{enumitem}

\DeclareRobustCommand{\SkipTocEntry}[5]{}

\newcommand{\mR}{\mathbb{R}}   
\newcommand{\mC}{\mathbb{C}}   

\newcommand{\abs}[1]{\lvert #1 \rvert}  
\newcommand{\norm}[1]{\lVert #1 \rVert}  

\newcommand{\ol}[1]{\overline{#1}}

\newcommand{\mT}{\mathcal{T}}
\newcommand{\mH}{\mathcal{H}}

\newcommand{\mL}{\mathcal{L}}

\newcommand{\calS}{\mathcal{S}}

\newcommand{\calC}{\mathcal{C}}

\newcommand{\bfa}{\mathbf{a}}
\newcommand{\bfb}{\mathbf{b}}
\newcommand{\bfc}{\mathbf{c}}
\newcommand{\bfd}{\mathbf{d}}

\newcommand{\bfu}{\mathbf{u}}

\newcommand{\bfi}{\mathbf{i}}

\newcommand{\id}{\mathrm{Id}}

\newcommand{\p}{\partial}

\DeclareMathOperator{\supp}{supp}

\DeclareMathOperator{\curl}{curl}

\newcommand{\rmd}{\mathrm{d}}

\makeatother

\usepackage{babel}

\usepackage{orcidlink}

\begin{document}

\title{On elastic scattering behavior of corner domains} 

\begin{sloppypar}

\begin{abstract}
We investigate the scattering of elastic waves by anisotropic inhomogeneous media at a fixed wave number. We prove that, in two dimensions, every penetrable obstacle with piecewise $C^{1}$ boundary and corner singularities, and, in higher dimensions, every obstacle with edge singularities, scatters every incident wave satisfying suitable assumptions. This work extends our previous result \cite{KSS25AnisotropicII} from scalar equations to systems of equations, a generalization that requires a more delicate blowup analysis and more involved computations. Roughly speaking, our results establish the principle that \emph{corners always scatter} for general anisotropic media with variable coefficients. 
\end{abstract}

\subjclass[2020]{35J47, 35P25, 35R35, 74B05} 
\keywords{free boundary, corners always scatter, harmonic polynomials, blowup solutions} 

\author[Kow]{Pu-Zhao Kow\,\orcidlink{0000-0002-2990-3591}}
\address{Department of Mathematical Sciences, National Chengchi University, Taipei 116, Taiwan}
\email{pzkow@g.nccu.edu.tw} 

\author[Salo]{Mikko Salo\,\orcidlink{0000-0002-3681-6779}}
\address{Department of Mathematics and Statistics, P.O. Box 35 (MaD), FI-40014 University of Jyv\"{a}skyl\"{a}, Finland}
\email{mikko.j.salo@jyu.fi}

\author[Shahgholian]{Henrik Shahgholian\,\orcidlink{0000-0002-1316-7913}}
\address{Department of Mathematics, KTH Royal Institute of Technology, SE-10044 Stockholm, Sweden 
}
\email{henriksh@kth.se}

\maketitle

\tableofcontents{}

\section{Introduction} 

\subsection{Background and preliminaries} \label{subsec_background}
In this paper, we investigate the scattering behavior of elastic waves by a bounded medium with anisotropic inhomogeneities, modeled by a system of partial differential equations.
The material properties of the medium are described by spatially varying anisotropic coefficients. In our previous work \cite{KSS25AnisotropicII}, we studied the corresponding problem for acoustic waves, which are governed by a single partial differential equation. Roughly speaking, our results establish that \emph{corners always scatter incident fields}, provided certain assumptions are satisfied. 
The phenomenon of corner scattering was first investigated in pioneering work \cite{BPS14CornerScattering} and has since been extensively studied. Rather than attempting to provide an exhaustive account of the literature, we refer the reader to the recent survey \cite{CV26Survey} for an overview of scattering and non-scattering phenomena for the linear Helmholtz equation with inhomogeneities of nontrivial contrast, as well as to the recent work \cite{RX2026SplitCorners}. 

The primary goal of this paper is to extend some of these results to the elastic setting. We begin by reviewing several related works on elastic wave scattering \cite{BL10ITEP,DLS21ITPElastic,DFL26ITEP}, all of which are restricted to the isotropic constant-coefficient case. Their restriction to constant coefficients stems from the use of special solutions, known as complex geometric optics (CGO) solutions. To the best of our knowledge, no `corners always scatter' result is currently available for variable elastic coefficients, even in the isotropic setting. The present paper considers the more general case of anisotropic variable coefficients. 

To highlight the novelty of the present work, we briefly discuss some of the challenges that arise in the analysis of general elliptic systems: 
\begin{itemize}
\item The unique continuation property, which plays a fundamental role in establishing uniqueness in scattering theory, is known to fail for general elliptic systems, see \cite{KW16counterexampleUCP}. To the best of our knowledge, whether the unique continuation property holds for general anisotropic elastic systems with rough coefficients remains an open problem.
\item Another major challenge is that free boundary techniques for strongly coupled systems remain relatively underdeveloped. For example, an analogue of \cite[Theorem~1.1]{KSS23Anisotropic} for elastic waves is still unknown. The proof of \cite[Theorem~1.1]{KSS23Anisotropic} relies crucially on a mean value theorem for scalar elliptic equations with anisotropic variable coefficients. To the best of our knowledge, no analogous result is currently available for general elliptic systems with anisotropic variable coefficients. 
\end{itemize} 
We discuss additional difficulties in \Cref{rem:other-difficulties} below.

Before introducing our mathematical setting, we first establish some notation. To express the elasticity system in a compact vector form, we adopt the following tensor notation. 
\begin{enumerate}
\item For two vectors (i.e., first-order tensors) $u=(u_{i})$ and $v=(v_{i})$, we denote the scalar $u\cdot v=\sum_{i=1}^{d}u_{i}v_{i}$ and the matrix (i.e., second-order tensor) $(u\otimes v)_{ij}=u_{i}v_{j}$. Similarly, for any vector-valued function $u(x)=(u_{i}(x))$, we denote the scalar-valued function $\nabla\cdot u=\sum_{i=1}^{d}\partial_{i}u_{i}$ and the matrix-valued function $(\nabla\otimes u)_{ij} = \partial_{i}u_{j}$. 
\item For a vector $u=(u_{i})$ and a matrix $A=(A_{ij})$, we denote the vector $(u\cdot A)_{i}=\sum_{j=1}^{d}u_{j}A_{ij}$. Similarly, for a matrix-valued function $A(x)=(A_{ij}(x))$, we denote the vector-valued function $(\nabla\cdot A)_{i} = \sum_{j=1}^{d}\partial_{j}A_{ij}$. 
\item For a fourth-order tensor $\mathcal{A}=(A_{ijk\ell})$ and a matrix $\mathcal{B}=(B_{ij})$, the matrices $\mathcal{A}:\mathcal{B}$ and $\mathcal{B}:\mathcal{A}$ are defined by $(\mathcal{A}:\mathcal{B})_{ij}=\sum_{k,\ell=1}^{d}A_{ijk\ell}B_{k\ell}$ and  $(\mathcal{B}:\mathcal{A})_{k\ell}=\sum_{i,j=1}^{d}B_{ij}A_{ijk\ell}$, respectively. 
\item For two fourth-order tensors $\mathcal{A}=(A_{ijk\ell})$ and $\mathcal{B}=(B_{ijk\ell})$, we denote $(\mathcal{A}:\mathcal{B})_{ijk\ell} = \sum_{p,q=1}^{d} A_{ijpq}B_{pqk\ell}$. 
\item For two matrices $A=(A_{ij})$ and $B=(B_{ij})$, we denote $A:B = \sum_{i,j=1}^{d}A_{ij}B_{ij}$. 
\item For any matrix $A=(A_{ij})$, we denote $\abs{A}^{2} := A : \overline{A}$. 
\end{enumerate}

We now present the mathematical formulation of the problem. 
Let $D\subset\mR^{d}$ be a bounded Lipschitz domain with $d\ge 2$ such that $\mR^{d}\setminus\overline{D}$ is connected. Within this domain, let $\rho\in L^{\infty}(D)$ be a positive real-valued scalar function. We extend $\rho$ to $\mR^{d}$ by setting $\rho=1$ outside $D$. 
Let $\calC=(C_{ijk\ell})$ be a real-valued elasticity tensor satisfying the symmetry properties 
\begin{equation}
C_{ijk\ell}(x)=C_{k\ell ij}(x) ,\quad C_{ijk\ell}(x)=C_{jik\ell}(x) \quad \text{in $\mR^{d}$} \label{eq:symmetric-4-tensor}
\end{equation}
for all $1\le i,j,k,\ell \le d$. 
Moreover, as in \cite{Cha02elasticITP}, we assume that the very strong ellipticity condition (also known as Legendre condition \cite[(3.16), Definition~3.36]{GM12EllipticSystems})\footnote{Not to be confused with the strong ellipticity condition, also known as the Legendre-Hadamard condition, see \cite[(3.17), Definition~3.36]{GM12EllipticSystems}.
} holds, that is, there exists a constant $c>0$ such that 
\begin{equation}
c\abs{A}^{2} \le A:\calC(x) : \overline{A} \le c^{-1}\abs{A}^{2} \quad \text{for all $x\in\mR^{d}$} \label{eq:positivity}
\end{equation}
and for all real symmetric matrices $A$. For any vector-valued function $u$, we denote 
\begin{equation*}
\mL^{\calC}u = \nabla\cdot(\calC(x):(\nabla\otimes u)). 
\end{equation*}
According to our notations, $\mL^{\calC}u$ is itself a vector-valued function given by 
\begin{equation}
(\mL^{\calC}u)_{i}=\sum_{j,k,\ell=1}^{d}\partial_{j}(C_{ijk\ell}(x)\partial_{k}u_{\ell}) \quad \text{in $\mR^{d}$}\label{eq:elastic-operator}
\end{equation} 
In addition, we assume that $C_{ijk\ell}|_{\overline{D}} \in C^{\infty}(\overline{D})$ and that $\calC|_{\mR^{d}\setminus\overline{D}}=\calC_0$ is constant, in the sense that 
\begin{equation}
C_{ijk\ell}(x) = \left\{\begin{aligned}
& \tilde{C}_{ijk\ell}(x) && \text{in $\overline{D}$,} \\ 
& (C_0)_{ijk\ell} && \text{in $\mR^{d}\setminus\overline{D}$,}
\end{aligned}\right. \label{eq:elastic-tensor}
\end{equation}
for some $\tilde{\calC}=(\tilde{C}_{ijk\ell})$ with $\tilde{C}_{ijk\ell}\in C^{\infty}(\mR^{d})$.

We illuminate the elastic medium $(D,\calC,\rho)$ with an incident elastic field $u^{\rm inc}$ having a fixed frequency $\kappa>0$ that satisfies 
\begin{equation}
(\mL^{\calC_0} + \kappa^{2})u^{\rm inc} = 0 \quad \text{in $\mR^{d}$.} 
\end{equation}
We assume that $\calC_0$ is isotropic, that is, there exist Lam{\'e} constants $\lambda$ and $\mu$ with $\mu>0$ and $\lambda+2\mu>0$, so \eqref{eq:positivity} is satisfied, and 
\begin{equation}
C_{ijk\ell}(x) \equiv (C_0)_{ijk\ell} = \lambda \delta_{ij} \delta_{k\ell} + \mu(\delta_{ik}\delta_{j\ell} + \delta_{i\ell}\delta_{jk}) \quad \text{in $\mR^{d}\setminus\overline{D}$,} \label{eq:isotropic}
\end{equation}
where $\delta_{ij}$ denotes the Kronecker delta. 
Substituting \eqref{eq:isotropic} into \eqref{eq:elastic-operator} yields 
\begin{equation*}
\mL^{\calC_{0}}u = \mu\Delta u + (\lambda+\mu)\nabla(\nabla\cdot u) \quad \text{in $\mR^{d}\setminus\overline{D}$.} 
\end{equation*} 
Similar to the acoustic scattering theory (see, e.g., \cite{CCH23InverseScatteringTransmission,CK19scattering,KG08Factorization}), the elastic scattering theory \cite{Cha02elasticITP} (see also the classical monograph \cite{KGBB79elastic} and related works \cite{BP08elastic,CGK02LSM,CKAGK07Factorization,Haehner1998,KW21CharacterizeNonradiating}) establishes the existence of a unique scattered elastic field $u^{\rm sc} \in (H_{\rm loc}^{1}(\mR^{d}))^{d}$ which satisfies the outgoing Kupradze radiation condition (\Cref{def:Kupradze}). 
The total field $u^{\rm to} = u^{\rm sc} + u^{\rm inc}$ satisfies 
\begin{equation*}
(\mL^{\calC} + \kappa^{2}\rho(x))u^{\rm to} = 0 \quad \text{in $\mR^{d}$.} 
\end{equation*}

Since $\calC$ is $L^{\infty}$ but may have a jump discontinuity across $\p D$, the solution $u^{\rm to}$ is not in general $H^2$ but the difference of the conormal derivatives from interior and exterior is zero. It is now natural to consider the following definition. 

\begin{definition}
We say that the elastic medium $(D,\calC,\rho)$ is \emph{nonscattering} with respect to incident field $u^{\rm inc}$ if $u^{\rm sc}=0$ in $\mR^{d}\setminus\overline{D}$. 
In this case, the scattered field $u^{\rm sc} \in (H_{\rm loc}^{1}(\mR^{d}))^{d}$ satisfies 
\begin{equation}
\left\{\begin{aligned}
& (\mL^{\tilde{\calC}}+\kappa^{2}\rho(x))u^{\rm sc} = -(\mL^{\tilde{\calC}-\calC_0} + \kappa^{2}(\rho(x)-\id))u^{\rm inc} && \text{in $D$,} \\ 
& u^{\rm sc} = 0 ,\quad \mT^{\tilde{\calC}} u^{\rm sc} = \mT^{\tilde{\calC}-\calC_0} u^{\rm inc} && \text{on $\partial D$,} 
\end{aligned}\right. \label{eq:nonscattering}
\end{equation}
where $\nu$ is the inward unit normal vector to $\partial D$ and the \emph{traction operator} $\mT^{\calC}$ is given by 
\begin{equation*}
\mT^{\calC} u := \nu\cdot(\calC:(\nabla\otimes u)). 
\end{equation*} 
\end{definition}

Note that \eqref{eq:nonscattering} is equivalent to the following elastic transmission eigenvalue problem (cf. \cite{CKW21ITPElastic}) 
\begin{equation*}
\left\{\begin{aligned}
& (\mL^{\tilde{\calC}}+\kappa^{2}\rho(x))u^{\rm to} = 0 ,\quad (\mL^{\calC_{0}}+\kappa^{2})u^{\rm inc}=0 && \text{in $D$,} \\ 
& u^{\rm to} = u^{\rm inc} ,\quad \mT^{\tilde{\calC}} u^{\rm to} = \mT^{\calC_0} u^{\rm inc} && \text{on $\partial D$.} 
\end{aligned}\right. 
\end{equation*}
It should be noted that, in the isotropic constant-coefficient case, the vanishing behavior of elastic transmission eigenfunctions near corners was studied in \cite[Theorem~1.5]{BL10ITEP} and further developed in \cite[Theorems~2.1, 2.3, 3.1, and 3.4]{DLS21ITPElastic}. By formally interpreting corners as points with infinite curvature, \cite[Theorem~4.6]{DFL26ITEP} established an exact quantitative characterization of this phenomenon. The results in \cite{BL10ITEP,DLS21ITPElastic,DFL26ITEP} are based on the construction of special solutions known as complex geometric optics (CGO) solutions.

Similar to \cite{KSS23Anisotropic,KSS25AnisotropicII}, the elastic nonscattering problem can be reformulated as a Bernoulli-type free boundary problem of the form
\begin{equation*}
(\mL^{\tilde{\calC}}+\kappa^{2}\rho(x))u^{\rm sc} = f \mL^{d}\lfloor D + g \mH^{d-1}\lfloor\partial D ,\quad u^{\rm sc}|_{\mR^{d}\setminus\overline{D}} = 0, 
\end{equation*}
see \eqref{eq:nonscattering-Bernuolli} below. Hereafter, we denote by $\mL^{d}\lfloor D \equiv \chi_{D}$ the $d$-dimensional Lebesgue measure restricted to $D$, and by $\mH^{d-1}\lfloor\Gamma$ the $(d-1)$-dimensional Hausdorff measure restricted to $\Gamma$. The latter can be understood in the distributional sense:  
\begin{equation*}
(g\mH^{d-1}\lfloor\Gamma)(\phi) := \int_{\Gamma} g\phi\,\rmd \mH^{d-1} \quad \text{for all $\phi\in C_{c}^{\infty}(\mR^{d})$.} 
\end{equation*}

\subsection{Vanishing Bernoulli condition} 

Our first theorem states that any piecewise $C^{1}$ planar obstacle whose boundary has a singular (non-$C^{1}$) point always scatter in many cases: 

\begin{theorem}\label{thm:1}
Let $D$ be a bounded open set in $\mR^{2}$, and suppose that $D$ is simply connected and has piecewise $C^{1}$ boundary (cf. \cite[Definition~1.2]{KSS25AnisotropicII}). 
Let $\kappa>0$, let $\rho\in L^{\infty}(D)$ be a positive real-valued function, and let $\calC$ be the elastic tensor in \eqref{eq:elastic-tensor} satisfying the assumptions stated above. Suppose that there exists $x_0\in\partial D$ such that the contrast $h(x):= \kappa^{2}\left(\rho(x)-1\right)\chi_{D}$ satisfies the non-degeneracy condition at $x_{0}\in\partial D$: 
\begin{equation}
\text{there is $r>0$ such that $h\in C^{\alpha}(\overline{D})\cap B_{r}(x_0)$ and $h(x_0)\neq 0$,} \label{eq:non-degeneracy-condition}
\end{equation}
and the elastic tensor $\tilde{\calC}$ satisfies 
\begin{equation}
\abs{\tilde{\calC}(x)-\calC_0} \le C\abs{x-x_0}^{2+\alpha} ,\quad \max_{ijk\ell} \abs{\nabla \tilde{C}_{ijk\ell}} \le C\abs{x-x_0}^{1+\alpha}. \label{eq:degenerate-condition}
\end{equation}
If $\partial D$ is not $C^{1}$ near $x_0$, then the anisotropic medium $(D,\calC,\rho)$ scatters nontrivially every incident wave $u^{\rm inc}$ satisfying either $u^{\rm inc}(x_{0})\neq 0$ or $\nabla u^{\rm inc}(x_{0})\neq 0$. 
\end{theorem}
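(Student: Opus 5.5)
We argue by contradiction. Suppose $(D,\calC,\rho)$ is nonscattering for some $u^{\rm inc}$ with $u^{\rm inc}(x_0)\neq 0$ or $\nabla u^{\rm inc}(x_0)\neq 0$, so that $u:=u^{\rm sc}$ solves \eqref{eq:nonscattering} and vanishes outside $D$. Translate $x_0=0$. Writing the equation distributionally in $\mR^2$ gives
\begin{equation*}
(\mL^{\tilde{\calC}}+\kappa^{2}\rho)u = f\chi_D + g\,\mH^{1}\lfloor\partial D,
\end{equation*}
where $f=-(\mL^{\tilde\calC-\calC_0}+h)u^{\rm inc}$ and $g=\mT^{\tilde\calC-\calC_0}u^{\rm inc}$.

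\textbf{Step 1: the sources have a nonvanishing leading term.} By \eqref{eq:degenerate-condition}, the coefficient perturbation contributes $|(\mL^{\tilde\calC-\calC_0})u^{\rm inc}|\lesssim |x|^{1+\alpha}$ and $|g|\lesssim|x|^{2+\alpha}$. Let $m\in\{0,1\}$ be the order of the first nonvanishing Taylor term of $u^{\rm inc}$ at $0$. Since $h\in C^\alpha$ with $h(0)\ne0$, we get
\begin{equation*}
f(x)=-h(0)P_m(x)+O(|x|^{m+\alpha}),
\end{equation*}
where $P_m$ is the nonzero vector-valued homogeneous polynomial of degree $m$ from $u^{\rm inc}$. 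The surface term $g$ is of strictly higher order ($\ge 2+\alpha>m+1$), so it will disappear in a blowup of homogeneity $m+2$. This is exactly what the degeneracy assumption \eqref{eq:degenerate-condition} is designed for: the Bernoulli jump vanishes to high order.

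\textbf{Step 2: optimal growth and blowup.} Next I would show $\sup_{B_r}|u|\lesssim r^{m+2}$. The natural route is to use elliptic $C^{1,\beta}$ (or $W^{2,p}$-type) estimates for the system. The Legendre condition \eqref{eq:positivity} gives a Gårding inequality, and the coefficients are smooth in $D$ and close to constant near $0$. Combined with $u=0$ outside $D$, these estimates should yield the growth bound via a Campanato-type iteration comparing $u$ with solutions of the constant-coefficient system $\mL^{\calC_0}$.

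With the bound in hand, take rescalings $u_r(x)=u(rx)/r^{m+2}$. These are bounded in $C^{1,\beta}_{\rm loc}$, and a subsequence converges to $u_0$. Since the boundary is piecewise $C^1$, $D$ blows up to a sector (cone) $\Sigma$ of opening $\theta\in(0,2\pi)\setminus\{\pi\}$, using that $\partial D$ is not $C^1$ at $0$. The limit satisfies
\begin{equation*}
\mu\Delta u_0+(\lambda+\mu)\nabla(\nabla\cdot u_0)=-h(0)P_m\chi_\Sigma,\qquad u_0=0 \text{ outside }\Sigma,
\end{equation*}
and the vanishing of the traction jump persists because of Step 1.

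\textbf{Step 3: ruling out the sector (main obstacle).} I expect the main difficulty here. A nondegeneracy argument, testing against suitable functions, shows $u_0\not\equiv0$. The weak form says that for every $v$ with $\mL^{\calC_0}v=0$ in $\mR^2$,
\begin{equation*}
h(0)\int_{\Sigma} P_m\cdot v\,\rmd x = 0,
\end{equation*}
after cutoff and homogeneity considerations; more precisely, homogeneous Lamé-polynomial solutions $v$ of suitable degree give vanishing moments on truncated sectors. I would construct explicit homogeneous polynomial solutions of the Lamé system in the plane, built from the Kolosov–Muskhelishvili representation with holomorphic $\phi,\psi$ of the form $z^N$. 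I would then compute $\int_{\Sigma\cap B_1}P_m\cdot v$ in polar coordinates.

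The point is to reduce to a trigonometric integral over $(0,\theta)$ that is nonzero unless $\theta\in\{\pi,2\pi\}$. This parallels the harmonic-polynomial argument in \cite{KSS25AnisotropicII}, but it is more involved because the coupling $(\lambda+\mu)\nabla\nabla\cdot$ mixes components. In the case $m=1$, $P_1$ is a matrix-linear field, and one may need to treat its symmetric and skew parts separately, choosing $\phi,\psi$ adapted to each.

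Obtaining a contradiction for every opening angle $\theta\ne\pi$ then shows $u^{\rm sc}\neq0$ outside $D$, which proves the theorem.
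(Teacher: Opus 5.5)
\textbf{The gap is in Step 3.} The moment identity $h(0)\int_{\Sigma\cap B_1}P_m\cdot w\,\rmd x=0$, for homogeneous polynomial solutions $w$ of $\mL^{\calC_0}w=0$, is false for the Lam\'e system. The trigonometric-integral reduction therefore has nothing to stand on.

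Green's formula on $\Sigma\cap B_1$ removes the two rays, since $u_0=\nabla u_0=0$ there. It still leaves the arc term $\int_{\Sigma\cap\partial B_1}(\mT^{\calC_0}u_0\cdot w-u_0\cdot\mT^{\calC_0}w)$. For the Laplacian, with $u_0,w$ homogeneous of degrees $a,N$, this term equals $(a-N)\int u_0\cdot w$, so it vanishes for $N=a$. The elastic traction, however, also contains tangential derivatives. For $N=a$ the arc term is a multiple of $\lambda+\mu$ that does not vanish in general. Moreover, you never showed that $u_0$ is homogeneous; the paper points out that this is not available a priori.

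Here is a concrete counterexample. Take $m=0$, $\Sigma=\{x_2>0\}$ and $-h(0)P_0=(1,0)$. Then $u_0=\frac{1}{2\mu}(x_2^2,0)\chi_{\{x_2>0\}}$ solves the limit problem with zero Cauchy data on $\{x_2=0\}$. Now set
\[
w=\bigl((\mu-\lambda)x_1^2-(3\lambda+5\mu)x_2^2,\;2(\lambda+3\mu)x_1x_2\bigr),\qquad w_1+\bfi w_2=(\lambda+3\mu)z^2-2(\lambda+\mu)\abs{z}^2 .
\]
This $w$ satisfies $\mL^{\calC_0}w=0$ in $\mR^2$ and is homogeneous of degree $2=m+2$. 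Yet $\int_{\{x_2>0\}\cap B_1}(1,0)\cdot w\,\rmd x=-\frac{\pi}{2}(\lambda+\mu)\neq 0$. The identity thus already fails in the half-plane case, so no angle-independent ``cutoff and homogeneity'' argument can produce it. It only holds when $\lambda+\mu=0$, where the system decouples into scalar Laplacians.

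\textbf{What is missing is to use the Cauchy data directly.} The limit equation gives $u_0\in W^{2,p}_{\rm loc}\subset C^{1,\alpha}_{\rm loc}$, so $u_0=\nabla u_0=0$ on both rays. Extend $u_0$ by zero across one ray and use analyticity of solutions of the constant-coefficient system. This shows that $u_0$ coincides in $\Sigma$ with an explicit polynomial fixed by the Cauchy data on that ray. The paper writes this general solution in complex coordinates. A determinant computation then shows that, for opening angle $\neq\pi$, the Cauchy data on the second ray vanish only if $P_m=0$.

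For $m\le 1$ there is a shortcut. After a rotation, place the first ray on $\{x_2=0,\ x_1>0\}$. Let $\mathcal{P}_m$ denote the homogeneous polynomials of degree $m$.
\begin{enumerate}
\item Choose $p$ with components in $x_2^2\mathcal{P}_m$ and $\mL^{\calC_0}p=-h(0)P_m$. It exists because this map between spaces of equal dimension is injective, by the same Cauchy uniqueness.
\item By Cauchy uniqueness, $u_0=p$ in $\Sigma$.
\item Each $p_i$ then vanishes to second order on both full lines. Hence $p_i$ is divisible by $x_2^2\ell^2$, where $\ell$ is the linear form of the second line; $\ell$ is not proportional to $x_2$ because the angle is $\neq\pi$.
\item Since $\deg p_i=m+2\le 3<4$, this forces $p=0$, and therefore $P_m=0$, a contradiction.
\end{enumerate}

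\textbf{A smaller point in Step 2.} Your rescalings are not bounded in $C^{1,\beta}_{\rm loc}$: the surface term $g\,\mH^1\lfloor\partial D$ and $\rho\in L^\infty$ prevent this. You only get uniform Lipschitz bounds, and $C^{1,\alpha}$ regularity holds for the limit alone.
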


\begin{remark*}
Compared with our previous result in \cite[Theorem~1.4]{KSS25AnisotropicII}, we establish the result only for incident waves $u^{\rm inc}$ satisfying either $u^{\rm inc}(x_{0})\neq 0$ or $\nabla u^{\rm inc}(x_{0})\neq 0$. This restriction is due to the lengthy calculations required in the higher-order case, which are difficult to verify, see \Cref{rem:difficulty-higher-order}.
\end{remark*}

\begin{remark}\label{rem:other-difficulties}
Motivated by the previous works \cite{KSS25AnisotropicII,SS25vanishingcontrast}, one may ask whether, in the two-dimensional case, the result of \Cref{thm:1} extends to Lipschitz domains $D$. Unlike the piecewise $C^{1}$ case considered in \Cref{thm:1}, the analysis in this setting appears to require a suitable monotonicity formula. 
We leave this problem for future investigation. 
\end{remark}

The following theorem gives an analogous result for domains in $\mR^{d}$, $d\ge 3$, with edge singularities. 

\begin{theorem}\label{thm:2}
Let $D$ be a bounded open set in $\mR^{d}$ with $d\ge 3$ such that $\mR^{d}\setminus\overline{D}$ is connected. Let $\kappa>0$, let $\rho\in L^{\infty}(D)$ be a positive real-valued function, and let $\calC$ be the elastic tensor in \eqref{eq:elastic-tensor} satisfying the assumptions stated above. Suppose that \eqref{eq:non-degeneracy-condition} and \eqref{eq:degenerate-condition} are satisfied. If $\partial D$ contains an edge point $x_0\in\partial D$ (cf. \cite[Definition~1.3]{KSS25AnisotropicII}), then the anisotropic medium $(D,\calC,\rho)$ scatters nontrivially every incident wave $u^{\rm inc}$ satisfying either $u^{\rm inc}(x_{0})\neq 0$ or $\nabla u^{\rm inc}(x_{0})\neq 0$. 
\end{theorem}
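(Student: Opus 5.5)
We argue by contradiction. Suppose $u^{\rm sc}=0$ in $\mR^{d}\setminus\overline{D}$ for some incident wave with $u^{\rm inc}(x_0)\neq 0$ or $\nabla u^{\rm inc}(x_0)\neq 0$. Extending $u:=u^{\rm sc}$ by zero, \eqref{eq:nonscattering} takes the Bernoulli form
\[
(\mL^{\tilde{\calC}}+\kappa^{2}\rho)u = f\chi_{D} + g\mH^{d-1}\lfloor\partial D ,\qquad u=0 \text{ outside } D,
\]
with $f=-(\mL^{\tilde{\calC}-\calC_0}+h)u^{\rm inc}$ and $g=\mT^{\tilde{\calC}-\calC_0}u^{\rm inc}$. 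By \eqref{eq:degenerate-condition}, $g=O(\abs{x-x_0}^{2+\alpha})$ near $x_0$, so the surface term is negligible at the scales we use. Likewise $(\mL^{\tilde{\calC}-\calC_0})u^{\rm inc}=O(\abs{x-x_0}^{1+\alpha})$. Hence near $x_0$ the source is $f=-h\,u^{\rm inc}+O(\abs{x-x_0}^{1+\alpha})$ with $h(x_0)\neq 0$.

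\textbf{Case $u^{\rm inc}(x_0)=a\neq 0$.} The leading source is the nonzero constant vector $-h(x_0)a$. Since $u\in H^{1}$ vanishes outside $D$, interior elliptic estimates for the system should give $\abs{u}\le C r^{2}$ in $B_r(x_0)$, via $W^{2,p}$/$C^{1,\alpha}$ regularity for $\mL^{\tilde{\calC}}$ with $\tilde{\calC}$ smooth and the right-hand side bounded. The extra decay of $\tilde{\calC}-\calC_0$ ensures that the jump in $\calC$ does not spoil this. We then rescale, $u_r(y)=r^{-2}u(x_0+ry)$, and pass to a subsequence converging in $C^{1}_{\rm loc}$ to $u_0$. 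Let $D_0$ be the tangent cone of $D$ at $x_0$: a sector of opening $\theta\neq\pi$ in $d=2$, or a wedge times $\mR^{d-2}$ at an edge point. The limit satisfies $\mL^{\calC_0}u_0=-h(x_0)a\chi_{D_0}$ in $\mR^{d}$ and $u_0=0$ outside $D_0$, with homogeneous-of-degree-2 growth.

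To rule this out, we test against homogeneous polynomial solutions $w$ of the adjoint Lamé system $\mL^{\calC_0}w=0$, the elastic analogue of harmonic polynomials. Integrating $u_0$ against such $w$ gives $\int_{D_0\cap B_1}a\cdot w\,\rmd x=0$ for all Lamé polynomials $w$ of a suitable degree; the boundary terms vanish because $u_0$ and its traction vanish on $\partial D_0$ in the $C^1$ sense. A cleaner alternative is to use a cutoff and homogeneity, as in \cite{KSS25AnisotropicII}. We then construct explicit Lamé polynomials, from $\mathrm{Re}/\mathrm{Im}$ of $z^{n}$ combined with the Kolosov--Muskhelishvili-type representation $w=\nabla\phi+(\text{correction})$, for which this integral over a sector of angle $\theta\neq\pi$ is nonzero. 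This yields the contradiction. In higher dimensions we reduce to the 2D cross-section of the edge, integrating out the $\mR^{d-2}$ directions.

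\textbf{Case $u^{\rm inc}(x_0)=0$, $\nabla u^{\rm inc}(x_0)\neq 0$.} Now the source is $-h(x_0)(\nabla u^{\rm inc}(x_0))^{T}(x-x_0)+O(\abs{x-x_0}^{1+\alpha})$. Here \eqref{eq:degenerate-condition} is exactly strong enough that the coefficient perturbation is of lower order. We blow up at scale $r^{3}$ and obtain $\mL^{\calC_0}u_0=-h(x_0)\,M y\,\chi_{D_0}$ with $M\neq 0$ and $u_0=0$ off $D_0$. The orthogonality condition becomes $\int_{D_0\cap B_1}(My)\cdot w=0$ for all Lamé polynomials $w$ of the relevant degrees.

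\textbf{Main obstacle.} The hardest step is the explicit computation showing that, for every sector angle $\theta\in(0,2\pi)\setminus\{\pi\}$, every nonzero $a$ (respectively every nonzero $M$), and all Lamé parameters with $\mu>0$ and $\lambda+2\mu>0$, some Lamé polynomial makes the integral nonzero. I would first try degree-2 and degree-3 polynomials built from $\nabla\mathrm{Re}(z^{n})$, $\nabla\mathrm{Im}(z^{n})$, and the non-gradient family $\bar z\,\partial_{z}$-type solutions. I would compute the integrals in polar coordinates, which produces trigonometric expressions in $\theta$ of the form $\sin(n\theta)$ and $\sin((n\pm2)\theta)$. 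It then remains to check that the resulting $2\times2$ or $4\times4$ determinant does not vanish for $\theta\neq\pi$.

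A secondary difficulty is justifying nontriviality of the blowup limit. For this we only need $u=o(r^{2})$ (respectively $o(r^{3})$) to contradict the source term, so we can argue by the orthogonality identity applied directly at finite scale, without requiring a nondegenerate limit.
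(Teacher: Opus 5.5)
Your reduction (contradiction, Bernoulli form, blowup to $\mL^{\calC_0}u_0=F\chi_{D_0}$ with $u_0=\nabla u_0=0$ on $\partial D_0$) matches the paper. The step that is supposed to produce the contradiction, however, does not hold. The orthogonality $\int_{D_0\cap B_1}F\cdot w\,\rmd x=0$ for Lam\'e polynomials $w$ is not a consequence of Green's identity. Because $u_0$ is not compactly supported, Green's identity on $D_0\cap B_1$ leaves, besides the vanishing terms on $\partial D_0$, the flux $\int_{\partial B_1\cap D_0}(\mT^{\calC_0}u_0\cdot w-u_0\cdot\mT^{\calC_0}w)\,\rmd\mH^{d-1}$. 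For the Laplacian with $u_0,w$ homogeneous of degrees $p,q$, this flux equals $(p-q)\int_{\partial B_1\cap D_0}u_0w$ and so vanishes for $q=p$. For the Lam\'e system the traction also contains tangential derivatives, and these do not cancel when $\lambda+\mu\neq0$.

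Concretely, take the flat case $D_0=\{x_2>0\}\subset\mR^2$ with $m=0$. The function $u_0=x_2^2\,(a_1/(2\mu),\,a_2/(2(\lambda+2\mu)))$ solves $\mL^{\calC_0}u_0=a\chi_{D_0}$ with $u_0=\partial_2u_0=0$ on $\{x_2=0\}$. Now take the degree-two Lam\'e polynomial $w=\bar z^{2}\bfc-\frac{\lambda+\mu}{\lambda+3\mu}\abs{z}^{2}\left(\begin{array}{cc}1&-\bfi\\-\bfi&-1\end{array}\right)\bfc$ (cf.\ the proof of \Cref{lem:general-solution}). It gives $\int_{D_0\cap B_1}a\cdot w\,\rmd x=-\frac{\pi(\lambda+\mu)}{4(\lambda+3\mu)}(a_1c_1-\bfi a_1c_2-\bfi a_2c_1-a_2c_2)$. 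With $\bfc=e_1$ this equals $-\frac{\pi(\lambda+\mu)}{4(\lambda+3\mu)}(a_1-\bfi a_2)\neq0$ for every real $a\neq0$; pass to the real or imaginary part of $w$ to get a real example. So your identity, if true, would also exclude the flat angle $\pi$.

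There are further problems. You never show that $u_0$ is homogeneous, which any cutoff-and-homogeneity variant needs. The decisive nonvanishing computation is left open. The ``finite scale'' fallback fails because Lam\'e polynomials do not solve $(\mL^{\tilde{\calC}}+\kappa^{2}\rho)w=0$ on $D$; this is exactly why CGO/orthogonality methods have stayed restricted to constant isotropic coefficients.

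The missing idea is Cauchy uniqueness for the constant-coefficient system. The map $\mL^{\calC_0}:(x_1^2\mathcal{P}_k)^d\to(\mathcal{P}_k)^d$ is injective by analyticity/unique continuation and acts between spaces of equal dimension, so it is a bijection. Hence $u_0$ coincides in the wedge with an explicit homogeneous polynomial of degree $m+2$ (\Cref{lem:high-dimen-poly}). This gives homogeneity for free, and no orthogonality relation is needed. The paper then applies Federer's dimension reduction at an edge point to reduce to two dimensions; ``integrating out'' the $\mR^{d-2}$ directions, as you propose, is not meaningful for non-decaying $u_0$. In two dimensions it uses the explicit sector solutions of \Cref{lem:general-solution,lem:explicit-solution-bvp} to show that zero Cauchy data on both rays forces opening angle $\pi$ (\Cref{lem:half-space-solution-uniqueness}).

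Once $u_0$ equals a polynomial $W$ with $W=\nabla W=0$ on two distinct hyperplanes $\{\ell_\pm=0\}$, you can even finish directly. Each component of $W$ is divisible by $\ell_+^2\ell_-^2$, which is impossible in degree $m+2\le3$ unless $W=0$; then $H=\mL^{\calC_0}W=0$, a contradiction. Finally, $W^{2,p}$/$C^{1,\alpha}$ estimates only give $\abs{u}\le Cr^{1+\beta}$ at $x_0$. The quadratic (resp.\ cubic) growth you use is the free-boundary estimate of \Cref{lem:Lipschitz}, not an interior elliptic estimate.
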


It is worth noting that, in the isotropic constant-coefficient case, an exact quantitative analogue of this result was established in \cite[Theorem~4.3]{DFL26ITEP} by formally interpreting corners as points of infinite curvature.

\subsection{Nonvanishing Bernoulli condition} 

We also obtain several results in the regime where \eqref{eq:degenerate-condition} does not hold. 

\begin{theorem}\label{thm:3} 
Let $D$ be a bounded open set in $\mR^{2}$, and suppose that $D$ is simply connected and has piecewise $C^{1}$ boundary (cf. \cite[Definition~1.2]{KSS25AnisotropicII}). 
Let $\kappa>0$, let $\rho\in L^{\infty}(D)$ be a positive real-valued function, and let $\calC$ be the elastic tensor in \eqref{eq:elastic-tensor} satisfying the assumptions stated above. Suppose that there exists $x_0\in\partial D$ such that the contrast $h(x):= \kappa^{2}\left(\rho(x)-1\right)\chi_{D}$ satisfies the degeneracy condition at $x_{0}\in\partial D$: 
\begin{equation}
\abs{h(x)} \le C\abs{x-x_{0}}^{\alpha} \label{eq:cond1}
\end{equation}
and $\tilde{\calC}-\calC_{0}$ satisfies the asymptotic expansion at $x_{0}\in\partial D$: 
\begin{equation}
\tilde{\calC}(x)-\calC_{0} = \tilde{\calC}(x_{0}) -\calC_{0} + \tilde{R}(x-x_{0}) \label{eq:cond2}
\end{equation}
with $\abs{\tilde{R}}\le C\abs{x}^{\alpha}$ and $\abs{\nabla\otimes\tilde{R}}\le C\abs{x}^{\alpha-1}$. 
If $\partial D$ is not $C^{1}$ near $x_{0}$, then the obstacle $(D,\calC,\rho)$ scatters every incident wave $u^{\rm inc}$ satisfying 
\begin{equation}
\lim_{x\rightarrow x_{0}}\mT^{\tilde{\calC}-\calC_{0}}u^{\rm inc}(x) \not\equiv0. \label{eq:nondegenerate-Bernoulli}
\end{equation}
\end{theorem}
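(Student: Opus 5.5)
We argue by contradiction. Assume $(D,\calC,\rho)$ is nonscattering for some $u^{\rm inc}$ satisfying \eqref{eq:nondegenerate-Bernoulli}, and translate so that $x_0=0$. Then $u:=u^{\rm sc}$ vanishes outside $\overline{D}$ and solves \eqref{eq:nonscattering}. Equivalently, in the distributional (Bernoulli) form,
\begin{equation*}
(\mL^{\tilde{\calC}}+\kappa^{2}\rho)u = f\,\chi_{D} + g\,\mH^{1}\lfloor\partial D ,\quad u=0 \text{ in } \mR^{2}\setminus\overline{D},
\end{equation*}
where $f=-(\mL^{\tilde{\calC}-\calC_0}+h)u^{\rm inc}$ and $g=\mT^{\tilde{\calC}-\calC_0}u^{\rm inc}$ (up to the sign convention for $\nu$). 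By \eqref{eq:cond1}, \eqref{eq:cond2} and the smoothness of $u^{\rm inc}$, we have $\abs{f(x)}\le C\abs{x}^{\alpha-1}$. We also have $g(x)=\nu(x)\cdot\bigl((\tilde{\calC}(0)-\calC_0):(\nabla\otimes u^{\rm inc})(0)\bigr)+O(\abs{x}^{\alpha})$. Setting $M:=(\tilde{\calC}(0)-\calC_0):(\nabla\otimes u^{\rm inc})(0)$, a constant matrix, hypothesis \eqref{eq:nondegenerate-Bernoulli} says that $\nu\cdot M\not\equiv 0$ along the limiting directions at $0$, i.e.\ $M$ applied to at least one of the two limiting normals is nonzero.

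First we prove a Lipschitz (linear growth) bound $\sup_{B_r}\abs{u}\le Cr$ near $0$. The source is a measure with bounded density on the $C^1$ pieces, and $u$ vanishes on the exterior. We would obtain this bound from $W^{1,p}$ and Campanato estimates for the system with coefficients $\tilde{\calC}\in C^{\alpha}$, applied to the extension $u$ on all of $B_r$. Here the Legendre condition \eqref{eq:positivity} gives Caccioppoli inequalities, and the piecewise $C^{1}$ structure gives a uniform density lower bound for the complement. Next we perform the blowup $u_r(x):=u(rx)/r$. Because of the linear bound, $u_r$ is bounded in $W^{1,2}_{\rm loc}$ and subconverges to some $u_0$. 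At the same time $D/r$ converges to the tangent sector $S$ with opening angle $\theta\in(0,2\pi)\setminus\{\pi\}$, since $\partial D$ is not $C^{1}$ at $0$. The volume term scales like $r\abs{f(rx)}\lesssim r^{\alpha}\to0$, and the coefficients freeze to $\tilde{\calC}(0)$. In the limit, $u_0$ is a homogeneous-of-degree-one (after a further Weiss-type or compactness argument) solution of
\begin{equation*}
\mL^{\tilde{\calC}(0)}u_0 = (\nu\cdot M)\,\mH^{1}\lfloor\partial S ,\quad u_0=0 \text{ outside } S .
\end{equation*}
If homogeneity is not directly available, we would instead expand $u$ and use that the blowup still satisfies the transmission relations $u_0=0$ and $\mT^{\tilde{\calC}(0)}u_0=\nu\cdot M$ on the two rays of $\partial S$.

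The contradiction then comes from the corner geometry. Inside $S$ the function $u_0$ solves a constant-coefficient system and vanishes on both rays. Its traction on the ray with normal $\nu_j$ equals $\tilde{\calC}(0):(\nu_j\otimes \p_{\nu_j}u_0)$, a constant vector. A degree-one solution vanishing on two nonparallel rays must be linear, $u_0(x)=Ax$, and it vanishes on both rays. Since the rays span $\mR^{2}$ when $\theta\neq\pi$, this forces $A=0$, which contradicts $\nu\cdot M\not\equiv0$ on at least one ray. For the non-homogeneous case we would test the equation against the linear functions $x\mapsto e_i\,(x\cdot\nu_j^{\perp})$ (or simply against constants and linear vectors) on $S\cap B_1$ and integrate by parts: the boundary traction terms produce $\int_{\partial S\cap B_1}(\nu\cdot M)\cdot\phi\,\rmd\mH^1$, while the interior terms vanish. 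Choosing $\phi$ appropriately yields $\nu_1\cdot M=\nu_2\cdot M=0$, again a contradiction.

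We expect the main obstacle to be the uniform Lipschitz/linear-growth estimate at a corner for the elliptic system. In the scalar case \cite{KSS25AnisotropicII} this relied on scalar tools (comparison and mean value properties) that are unavailable here. We would try to replace them by a direct energy decay argument. This uses Caccioppoli plus the zero Dirichlet data on a set of positive density (a Poincar\'e inequality on $B_r$). We would then iterate $\dashint_{B_r}\abs{\nabla u}^2\le C$ via comparison with $\tilde{\calC}(0)$-solutions, where the boundary measure contributes $O(r^{-1}\cdot r)=O(1)$ in the rescaled energy.
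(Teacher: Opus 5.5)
Your set-up (Bernoulli reformulation, linear growth, blowup $u_r=u(rx)/r$ whose limit $u_0$ solves $\mL^{\tilde{\calC}(0)}u_0=(\nu\cdot M)\,\mH^{1}\lfloor\partial S$ with $u_0=0$ outside $S$) matches the paper. The step that produces the contradiction, however, has a genuine gap.

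\textbf{The gap.} Your main argument needs $u_0$ to be homogeneous of degree one, and you only say this follows from ``a further Weiss-type or compactness argument''. Neither is available here. A Weiss or monotonicity formula is exactly what is missing for non-rotation-invariant anisotropic systems; the paper explicitly avoids it for that reason. Compactness by itself does not make blowup limits homogeneous.

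Your fallback does not close the gap either. In Green's identity on $S\cap B_1$ with $\phi$ linear, the volume terms do vanish. But you have dropped the boundary term on the arc $S\cap\partial B_1$, which involves $\phi\cdot\mT^{\tilde{\calC}(0)}u_0-u_0\cdot\mT^{\tilde{\calC}(0)}\phi$ and hence the unknown $u_0$. If you use a cutoff $\eta(x/R)$ instead, the cutoff terms and the ray integrals both scale like $R^{2}$ (like $R$ for constant $\phi$) under $\abs{u_0(x)}\le C\abs{x}$. So nothing is forced.

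\textbf{The missing idea.} No homogeneity is needed, because each ray carries full Cauchy data; this is what the paper uses in \Cref{lem:nonvanishing-Bernoulli-m1} and \Cref{cor:nonvanishing-Bernoulli-m1}.
\begin{enumerate}
\item On the ray $\Gamma_1$ you have $u_0=0$ and $\mT^{\tilde{\calC}(0)}u_0=\nu_1\cdot M$, a constant vector, as you already observed.
\item Let $Q(\nu)_{i\ell}=\sum_{j,k}\tilde{C}_{ijk\ell}(0)\nu_j\nu_k$, which is positive definite by \eqref{eq:positivity}. The linear field $\ell(x)=(x\cdot\nu_1)\,Q(\nu_1)^{-1}(\nu_1\cdot M)$ has the same Cauchy data on $\Gamma_1$.
\item Near any point of $\Gamma_1\setminus\{0\}$, the function $u_0-\ell\,\chi_{\{x\cdot\nu_1>0\}}$ is a weak solution of the constant-coefficient system that vanishes on an open set. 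By analyticity it vanishes identically, so $u_0=\ell$ on the whole connected sector $S$.
\item Since the opening angle is not $\pi$, $x\cdot\nu_1\neq0$ on $\Gamma_2\setminus\{0\}$. So $u_0=0$ on $\Gamma_2$ forces $\nu_1\cdot M=0$, and by symmetry $\nu_2\cdot M=0$, contradicting \eqref{eq:nondegenerate-Bernoulli}.
\end{enumerate}
This also makes your degree-one classification unnecessary.

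\textbf{Two side remarks.} Your limiting operator $\mL^{\tilde{\calC}(0)}$ is the one the rescaling actually produces. The paper's display uses $\calC_0$ and $\mathrm{diag}(\mu,\lambda+2\mu)$, but the argument is identical with $\tilde{\calC}(0)$ and $Q(\nu)$ in their place.

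The linear-growth bound you single out as the main obstacle is not a new difficulty. Since $\tilde{\calC}$ is smooth and $\rho\in L^\infty$, both $f$ and $g$ are bounded. \Cref{lem:Lipschitz-regularity} then gives Lipschitz regularity, and $u(0)=0$ yields $\abs{u(x)}\le C\abs{x}$.
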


\begin{remark*}
The condition \eqref{eq:nondegenerate-Bernoulli} implies that $\tilde{\calC}(x_{0})\not\equiv \calC_{0}$ and $\nabla\otimes u^{\rm inc}(x_{0})\not\equiv 0$. This situation corresponds to that in \cite[Remark 1.8]{KSS25AnisotropicII} with $m=1$.
\end{remark*}

An analogous result for domains in $\mR^{d}$ with $d\ge 3$ that contain edge singularities is given in the following theorem.  

\begin{theorem}\label{thm:4} 
Let $D$ be a bounded open set in $\mR^{d}$ with $d\ge 3$ such that $\mR^{d}\setminus\overline{D}$ is connected. 
Let $\kappa>0$, let $\rho\in L^{\infty}(D)$ be a positive real-valued function, and let $\calC$ be the elastic tensor in \eqref{eq:elastic-tensor} satisfying the assumptions stated above. Suppose that \eqref{eq:cond1} and \eqref{eq:cond2} are satisfied. 
If $\partial D$ contains an edge point $x_0\in\partial D$ (cf. \cite[Definition~1.3]{KSS25AnisotropicII}), then the obstacle $(D,\calC,\rho)$ scatters every incident wave $u^{\rm inc}$ satisfying \eqref{eq:nondegenerate-Bernoulli}. 
\end{theorem}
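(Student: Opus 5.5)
We argue by contradiction and reduce to a homogeneous-blowup rigidity problem in a wedge times $\mathbb{R}^{d-2}$. Suppose $(D,\calC,\rho)$ does not scatter $u^{\rm inc}$, so $u^{\rm sc}$ vanishes outside $\overline D$ and satisfies \eqref{eq:nonscattering}. Set $u:=u^{\rm sc}$ and translate so that $x_0=0$. As in the Bernoulli reformulation, $u$ solves, in the distributional sense on $B_r$,
\begin{equation*}
(\mL^{\tilde\calC}+\kappa^2\rho)u = f\chi_D + g\,\mH^{d-1}\lfloor\partial D,\qquad u=0 \text{ in } B_r\setminus\overline D .
\end{equation*}
Here $f=-(\mL^{\tilde\calC-\calC_0}+h)u^{\rm inc}$ and $g=\mT^{\tilde\calC-\calC_0}u^{\rm inc}$. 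By \eqref{eq:cond1} and \eqref{eq:cond2}, $g(x)=g_0(\nu(x))+O(|x|^\alpha)$, where
\begin{equation*}
g_0(\nu):=\nu\cdot\big((\tilde\calC(0)-\calC_0):(\nabla\otimes u^{\rm inc}(0))\big).
\end{equation*}
Also $|f|\le C|x|^{\alpha-1}$, which is lower order.

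\textbf{Step 1 (regularity and scaling).} Since $g$ is bounded and $f$ is lower order, I first show that $u$ is Lipschitz near $0$ with $\|\nabla u\|_{L^\infty(B_s)}\le C$. This comes from standard Campanato/Schauder estimates for the Legendre-elliptic system with $C^\alpha$ coefficients and a bounded measure of Hausdorff type, using \eqref{eq:positivity}. I then consider the rescalings $u_s(x):=u(sx)/s$. They are uniformly Lipschitz, so along a subsequence $u_s\to u_0$ locally uniformly and weakly in $H^1_{\rm loc}$.

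\textbf{Step 2 (the blowup limit).} At an edge point, after rotation, \cite[Definition~1.3]{KSS25AnisotropicII} gives $D/s\to K\times\mathbb{R}^{d-2}$ in the Hausdorff sense locally. Here $K\subset\mathbb{R}^2$ is a sector with opening angle $\theta\in(0,\pi)\cup(\pi,2\pi)$. Passing to the limit gives
\begin{equation*}
\mL^{\tilde\calC(0)}u_0 = g_0(\nu)\,\mH^{d-1}\lfloor\partial(K\times\mathbb{R}^{d-2}),\qquad u_0=0 \text{ outside } K\times\mathbb{R}^{d-2}.
\end{equation*}
On each of the two flat faces $\Gamma_\pm$ the normal $\nu_\pm$ is constant, so the jump data $g_\pm:=g_0(\nu_\pm)$ are constant vectors.

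\textbf{Step 3 (rigidity, the main obstacle).} I must show that this limit problem forces $g_+=g_-=0$ when $\theta\neq\pi$. Combined with $u_0\not\equiv0$ (or with a direct argument), this contradicts \eqref{eq:nondegenerate-Bernoulli}, which says that $g_0(\nu)\ne0$ on the faces. I rely on two observations.
\begin{itemize}
\item The measure on the right-hand side is homogeneous of degree $-1$, so the blowup can be taken homogeneous of degree one. To do this I subtract the unique degree-one homogeneous particular solution, or equivalently test against degree-one test fields.
\item Testing against smooth fields invariant in the edge directions reduces the problem to the two-dimensional system with the constant tensor $\tilde\calC(0)$, which is Legendre elliptic. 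This requires integrating by parts on $[-R,R]^{d-2}$ and controlling boundary terms by Lipschitz bounds divided by $R$.
\end{itemize}
In $2$D, I test the equation against affine solutions $w(x)=Mx+b$; these satisfy $\mL^{\tilde\calC(0)}w=0$, and the test is run on the truncated sector $K\cap B_1$. Since $u_0$ vanishes outside $K$, Green's identity gives
\begin{equation*}
\int_{\Gamma_+\cap B_1} g_+\cdot w\,\rmd\mH^1+\int_{\Gamma_-\cap B_1} g_-\cdot w\,\rmd\mH^1
\end{equation*}
equal to an arc term on $\partial B_1\cap K$. Homogeneity pins down that arc term. Scaling $B_1\to B_R$ shows that the face terms grow like $R^2$ for linear $w$ and like $R$ for constant $w$, while the arc term is fixed by degree-one homogeneity. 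Comparing the identities at two different radii then forces
\begin{equation*}
\int_{\Gamma_\pm} g_\pm\cdot w=0
\end{equation*}
for all affine $w$. Choosing $w$ supported, in the sense of moments, along each ray separately gives $g_+=g_-=0$. This uses that the rays are not collinear, i.e.\ $\theta\neq\pi$.

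If the homogeneity argument fails to close because of the arc term, my fallback is a Stroh/Fourier-mode expansion in the angle for degree-one homogeneous solutions of the constant-coefficient system in the sector. I would show that the Bernoulli jump compatibility on the two faces has only the trivial solution unless $\theta=\pi$.

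Finally, $g_\pm=0$ means $(\tilde\calC(0)-\calC_0):(\nabla\otimes u^{\rm inc}(0))$ is annihilated by two independent normals $\nu_+$ and $\nu_-$. I also need it annihilated by the edge directions $e_3,\dots,e_d$. For this I use the same affine testing with $w$ depending on $x_3,\dots,x_d$, where the face terms carry those components via the tangential invariance. Together these show that the full matrix $(\tilde\calC(0)-\calC_0):(\nabla\otimes u^{\rm inc}(0))$ vanishes, so the limit in \eqref{eq:nondegenerate-Bernoulli} is zero in every direction, a contradiction.
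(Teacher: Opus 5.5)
Your Steps~1--2 follow the paper's order-one blowup. Your limiting operator $\mL^{\tilde\calC(0)}$ is the correct one; the paper writes $\mL^{\calC_0}$ there, which does not affect the argument below. One correction: the Lipschitz bound is not a consequence of Schauder or Campanato estimates. A single layer with bounded density on a Lipschitz surface need not be Lipschitz, and the bound genuinely uses $u=0$ outside $D$, as in \Cref{lem:Lipschitz-regularity,lem:Lipschitz}.

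The real gap is Step~3. First, homogeneity of the blowup does not follow from homogeneity of the right-hand side. Subsequential blowups need not be homogeneous without a monotonicity formula, and that is precisely what is unavailable here. Second, and decisively, even for a degree-one homogeneous $u_0$ your Green-identity argument carries no information. The traction $Tu_0$ is then homogeneous of degree zero. So the arc term $\int_{\partial B_R\cap K}(w\cdot Tu_0-u_0\cdot Tw)\,\rmd\mH^{1}$ equals $R$ times a constant for constant $w$ and $R^{2}$ times a constant for linear $w$, which are exactly the powers of the face terms. The identity at each radius is just the equation tested against $w$ and holds for every solution. Comparing two radii therefore yields nothing, in particular not $\int_{\Gamma_\pm}g_\pm\cdot w=0$. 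Your fallback (``show the jump compatibility has only the trivial solution'') restates what has to be proved.

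The missing idea is that each face carries overdetermined Cauchy data: $u_0=0$ and constant traction. Unique continuation then identifies the blowup outright, with no homogeneity and no reduction to two dimensions.
\begin{itemize}
\item Let $Q(\nu)_{i\ell}:=\sum_{j,k}\tilde C_{ijk\ell}(0)\nu_j\nu_k$, which is positive definite by \eqref{eq:symmetric-4-tensor} and \eqref{eq:positivity}. Put $\ell_+(x):=(x\cdot\nu_+)\,Q(\nu_+)^{-1}g_+$, with $\nu_+$ the inward normal of the face $\Gamma_+\times\mR^{d-2}$.
\item Near any point of that open face, $u_0-\ell_+\chi_{\{x\cdot\nu_+>0\}}$ solves the homogeneous system $\mL^{\tilde\calC(0)}(\cdot)=0$ in a small ball, because the two layer terms cancel. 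It also vanishes on half of that ball, so it vanishes by analyticity. Hence $u_0=\ell_+$ on the whole wedge $K\times\mR^{d-2}$.
\item This is \Cref{lem:nonvanishing-Bernoulli-m1} and its higher-dimensional version \eqref{eq:soln-order1-high-dim}. Since $\ell_+$ does not depend on $x_3,\dots,x_d$, the paper needs no dimension reduction.
\item As $\theta\neq\pi$, the other face is not contained in $\{x\cdot\nu_+=0\}$, so $u_0=0$ there forces $g_+=0$ (\Cref{cor:nonvanishing-Bernoulli-m1}). Symmetrically $g_-=0$. This contradicts \eqref{eq:nondegenerate-Bernoulli}, which is a condition on the boundary traction, i.e.\ on $g_\pm$.
\end{itemize}

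Your last paragraph should be dropped. The blowup problem only sees $\nu_\pm\cdot M$ with $M=(\tilde\calC(0)-\calC_0):(\nabla\otimes u^{\rm inc}(0))$. If $\nu_\pm\cdot M=0$, then $u_0\equiv0$ is consistent whatever the rows $e_j\cdot M$, $j\ge 3$, are. So no testing argument can show those rows vanish, and the contradiction does not need it.
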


\section{Proof of the main results in the vanishing Bernoulli case} 

Before presenting the main proof, we first highlight the following important observation concerning the symmetry of the elastic operator.

\begin{remark}[rotational transformation of $\mL^{\calC_0}$]\label{rem:rotational-symmetry}
Let $u$ be a solution to $\mL^{\calC_0}u=f$ in $\mR^{d}$, that is, 
\begin{equation*}
\sum_{j,k,\ell=1}^{d}(C_0)_{ijk\ell}\partial_{j}\partial_{k}u_{\ell} = f_{i}\text{ in $\mR^{d}$} \quad \text{for all $i=1,\cdots,d$.}
\end{equation*}
Let $A$ be real orthogonal matrix, i.e., $A^{-1}=A^{\intercal}$, and define $u_{A}(x):=Au(A^{\intercal}x)$. Since 
\begin{equation*}
\partial_{j'}\partial_{k'}(u_{A}(x))_{\ell'} = \sum_{\ell''=1}^{d}A_{\ell'\ell''} \partial_{j'}\partial_{k'}\left(u_{\ell''}(A^{\intercal}x)\right) = \sum_{j'',k'',\ell''=1}^{d}A_{\ell'\ell''} A_{j'j''}A_{k'k''}\partial_{j''}\partial_{k''}u_{\ell''}(A^{\intercal}x), 
\end{equation*} 
a direct computation shows that $u_{A}$ satisfies 
\begin{equation*}
\sum_{j',k',\ell'=1}^{d}\left(\sum_{j,k,\ell=1}^{d}(C_0)_{ijk\ell}(A^{\intercal})_{jj'}(A^{\intercal})_{kk'}(A^{\intercal})_{\ell\ell'}\right)\partial_{j'}\partial_{k'}(u_{A})_{\ell'} = f_{i}(A^{\intercal}\cdot)\text{ in $\mR^{d}$} 
\end{equation*}
for all $i=1,\cdots,d$. Equivalently, defining the transformed tensor 
\begin{equation*}
(C_{0,A})_{i'j'k'\ell'} := \sum_{i,j,k,\ell=1}^{d}(C_0)_{ijk\ell}(A^{\intercal})_{ii'}(A^{\intercal})_{jj'}(A^{\intercal})_{kk'}(A^{\intercal})_{\ell\ell'}, 
\end{equation*}
we obtain 
\begin{equation*}
\mL^{\calC_{0,A}}u_A = A f(A^{\intercal}\cdot) \quad \text{in $\mR^{d}$.} 
\end{equation*}
We note that, in general, $\mL^{\calC_{0}}$ is \emph{not rotationally invariant} when $\calC_{0}$ is anisotropic. Nevertheless, it still satisfies the standard assumptions of elasticity. In particular, under our assumption that $\calC_0$ is isotropic, we compute that 
\begin{equation*}
\begin{aligned}
(C_{0,A})_{i'j'k'\ell'} &= \sum_{i,j,k,\ell=1}^{d}\left(\lambda \delta_{ij} \delta_{k\ell} + \mu(\delta_{ik}\delta_{j\ell} + \delta_{i\ell}\delta_{jk})\right)(A^{\intercal})_{ii'}(A^{\intercal})_{jj'}(A^{-1})_{kk'}(A^{\intercal})_{\ell\ell'} \\ 
&= \sum_{i,j,k,\ell=1}^{d}\left(
\begin{aligned} 
&\lambda (A_{i'i}\delta_{ij}(A^{\intercal})_{jj'}) (A_{k'k}\delta_{k\ell}(A^{\intercal})_{\ell\ell'}) \\
&\quad + \mu(A_{i'i}\delta_{ik}(A^{\intercal})_{kk'})(A_{j'j}\delta_{j\ell}(A^{\intercal})_{\ell\ell'}) \\
&\quad + \mu(A_{i'i}\delta_{i\ell}(A^{\intercal})_{\ell\ell'})(A_{j'j}\delta_{jk}(A^{\intercal})_{kk'})
\end{aligned} 
\right) \\ 
&= \lambda \delta_{i'j'} \delta_{k'\ell'} + \mu(\delta_{i'k'}\delta_{j'\ell'} + \delta_{i'\ell'}\delta_{j'k'}) \\ 
&= (C_{0})_{i'j'k'\ell'}, 
\end{aligned}
\end{equation*} 
thus, the isotropic elasticity tensor is invariant under rotations.
\end{remark}

We assume the conditions in \Cref{subsec_background}, except that $\calC_0$ does not need to be isotropic. We begin by showing that the nonscattering condition \eqref{eq:nonscattering}, where we assume that  $u^{\rm sc}=0$ in $\mR^{d}\setminus\overline{D}$, can indeed be rephrased as a Bernoulli free boundary problem.

For each $\psi\in (C_{c}^{\infty}(\mR^{d}))^{d}$, one computes that (we slightly abuse some notations here) 
\begin{equation*}
\begin{aligned}
& \int_{\mR^{d}} \psi\cdot \left( -(\mL^{\calC-\calC_0} + \kappa^{2}(\rho(x)-\id))u^{\rm inc} \right)\mL^{d}\lfloor D \, \rmd x \\ 
& \quad = \int_{D} \psi\cdot \left( -(\mL^{\calC-\calC_0} + \kappa^{2}(\rho(x)-\id))u^{\rm inc} \right) \, \rmd x \\ 
& \quad = \int_{D} \psi \cdot \left( (\mL^{\calC}+\kappa^{2}\rho(x))u^{\rm sc} \right) \, \rmd x \\ 
& \quad = \int_{D} \left( - (\nabla\otimes\psi) : \calC(x) : (\nabla\otimes u^{\rm sc}) + \kappa^{2}\rho(x) u^{\rm sc} \right) \, \rmd x \\
& \qquad + \int_{\partial D} \psi\cdot \left( \nu\cdot(\tilde{\calC}:(\nabla\otimes u^{\rm sc})) \right) \, \rmd\mH^{d-1} \\ 
& \quad = \int_{\mR^{d}} \left( - (\nabla\otimes\psi) : \calC(x) : (\nabla\otimes u^{\rm sc}) + \kappa^{2}\rho(x) u^{\rm sc} \right) \, \rmd x \\
& \qquad + \int_{\partial D} \psi\cdot \left( \nu\cdot((\tilde{\calC}-\calC_0):(\nabla\otimes u^{\rm inc})) \right) \, \rmd\mH^{d-1} \\ 
& \quad = \int_{\mR^{d}} \psi \cdot \left( (\mL^{\calC} + \kappa^{2}\rho(x))u^{\rm sc} \right) \, \rmd x \\
& \qquad + \int_{\mR^{d}} \psi\cdot \left( \nu\cdot((\tilde{\calC}-\calC_0):(\nabla\otimes u^{\rm inc})) \right) \mH^{d-1}\lfloor\partial D \, \rmd x. 
\end{aligned}
\end{equation*}
This shows that \eqref{eq:nonscattering}  is equivalent to 
\begin{subequations} \label{eq:nonscattering-Bernuolli}
\begin{equation}
(\mL^{\tilde{\calC}}+\kappa^{2}\rho(x))u^{\rm sc} = f \mL^{d}\lfloor D + g \mH^{d-1}\lfloor\partial D ,\quad u^{\rm sc}|_{\mR^{d}\setminus\overline{D}} = 0 
\end{equation} 
in the sense of distributions, where 
\begin{equation}
f = -(\mL^{\tilde{\calC}-\calC_{0}}+h)u^{\rm inc} ,\quad g = \nu\cdot(\tilde{\calC}-\calC_{0}): (\nabla\otimes u^{\rm inc}) \label{eq:fg-functions}
\end{equation}
\end{subequations}
and $h(x):= \kappa^{2}(\rho(x)-1)\chi_{D}$.

By \cite[Proposition~5.8]{GM12EllipticSystems}, for each open set $\Omega\subset\mR^{d}$ with $d\ge 3$, there exists a constant $c_{*}>0$, depending only on the dimension $d$ and the elliptic constant $c$ in \eqref{eq:positivity}, such that any weak solution $u\in (H_{\rm loc}^{1}(\Omega))^d$ of $\mL^{\tilde{\calC}}u=0$ in $\Omega$ satisfies 
\begin{equation*}
\int_{B_{\rho}(x_0)}\abs{u}^{2}\,\rmd x \le c_{*}\left(\frac{\rho}{R}\right)^{d}\int_{B_{R}(x_0)} \abs{u}^{2}\,\rmd x 
\end{equation*}
for all balls $B_{\rho}(x_0)\subset B_{R}(x_0)\subset\Omega$. In particular, the operator $\mL^{\tilde{\calC}}$ satisfies property~(H) in \cite[Definition~2.1]{HK02fundamentalsolution}. Consequently, for $d\ge3$, one can construct a fundamental matrix $\tilde{\Gamma}$ for the general anisotropic elastic operator $\mL^{\tilde{\calC}}$ as in \cite[Section~3]{HK02fundamentalsolution} satisfying 
\begin{equation*}
\abs{\tilde{\Gamma}(x,y)} \le C\abs{x-y}^{2-d} \quad \text{for all $x\neq y$.} 
\end{equation*} 
One may also refer to \cite{DHM18FundamentalSolution}, however, their approach relies on a Moser-type inequality, which may be difficult to verify for elliptic systems (see \cite{GM12EllipticSystems}). Nevertheless, we mention their work here, as their method is robust and well suited to treating elliptic equations with lower-order perturbations of low regularity. 

In particular, if $\tilde{\calC}$ is constant and isotropic, by direct computation, one can choose constants $a$ and $b$ such that 
\begin{equation*}
\Gamma(x) = \left\{\begin{aligned} 
& a\log\abs{x}\id + b\abs{x}^{-2} x\otimes x && \text{for $d = 2$,} \\ 
& a\abs{x}^{2-d}\id + b\abs{x}^{-d} x\otimes x && \text{for $d\ge 3$,} 
\end{aligned}\right. 
\end{equation*}
is a fundamental matrix. In the special case $d=3$, this $\Gamma$ is known as the Kelvin matrix, see \cite[Section~1, Chapter~II]{KGBB79elastic}. It is also worth noting that Nakamura and Tanuma \cite{GTfund1,GTfund2} derived an explicit formula for the fundamental matrix of $\mL^{\tilde{\calC}}$ for an anisotropic constant tensor $\calC$ in the case $d=3$, expressed in terms of the eigenfunctions of the Stroh's eigenvalue problem.

The subsequent lemma, concerning the H{\"o}lder regularity of solutions, can be proved using essentially the same argument as in \cite[Lemma~2.1]{ACS01FreeBoundaryCalderon}. The only modifications are that one uses the properties of the fundamental matrix $\tilde{\Gamma}$ given as above, and observes that $\norm{u}_{L^{\infty}}$ can be controlled by $\norm{u}_{L^{2}}$ via an interior elliptic regularity estimate, see \cite[Theorem~5.20]{GM12EllipticSystems}\footnote{with the choice $\lambda=0$, the Morrey space $L^{p,\lambda}$ coincides with the usual $L^{p}$ space, which recovers exactly the Calder{\'o}n-type estimate for elliptic systems.}.

\begin{lemma}\label{lem:Hoelder-regularity}
Suppose $u\in (H_{\rm loc}^{1}(\mR^{d}))^{d}$ satisfies 
\begin{equation*}
(\mL^{\tilde{\calC}}+\kappa^{2}\rho(x))u = f \mL^{d}\lfloor D + g \mH^{d-1}\lfloor\partial D 
\end{equation*}
for some $f\in (L^{\infty}(D))^{d}$ and $g\in (L^{\infty}(\partial D))^{d}$. Then $u \in (C_{\rm loc}^{\beta})^{d}$ near $\partial D$ for some $0<\beta<1$.  
\end{lemma}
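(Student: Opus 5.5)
The plan is to follow the scheme of \cite[Lemma~2.1]{ACS01FreeBoundaryCalderon}: split $u$ into a Newtonian-type potential, which carries the measure-valued right-hand side, and a homogeneous solution, which is smooth in the interior.

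Fix $x_1\in\partial D$ and a ball $B=B_{2R}(x_1)$. Move the zeroth-order term to the right, so that
\begin{equation*}
\mL^{\tilde{\calC}}u = F := f\mL^{d}\lfloor D + g\mH^{d-1}\lfloor\partial D - \kappa^{2}\rho u .
\end{equation*}
Let $\eta\in C_c^\infty(B)$ be a cutoff with $\eta=1$ on $B_R(x_1)$, and set
\begin{equation*}
v(x):=\int \tilde{\Gamma}(x,y)\,\eta(y)\,\rmd F(y).
\end{equation*}
For $d\ge 3$, take $\tilde{\Gamma}$ to be the fundamental matrix constructed above via \cite{HK02fundamentalsolution}. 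For $d=2$, add a dummy variable to pass to three dimensions, so that the same construction applies; alternatively, argue directly with the logarithmic bound. Then $w:=u-v$ solves $\mL^{\tilde{\calC}}w=0$ in $B_R(x_1)$. Interior regularity for the system with smooth coefficients \cite[Theorem~5.20]{GM12EllipticSystems} bounds $w$ in $C^{\beta}$ (indeed Lipschitz) on $B_{R/2}$, in terms of $\norm{w}_{L^2(B_R)}\le \norm{u}_{L^2}+\norm{v}_{L^2}$.

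So it suffices to show $v\in C^\beta$. There are two pieces of the measure to handle.

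The volume part $f-\kappa^2\rho u$ needs $u\in L^\infty$ first. Get this by a bootstrap: $u\in L^{2^*}$ by Sobolev embedding, and convolving with the kernel bound $|x-y|^{2-d}$ improves the integrability exponent, so finitely many steps give $L^\infty$. Once $u\in L^\infty$, this part is bounded, and the potential of a bounded density is $C^{1,\gamma}$, hence $C^\beta$. That conclusion uses the gradient estimate $|\nabla_x\tilde\Gamma(x,y)|\le C|x-y|^{1-d}$, which holds here because the coefficients are smooth (see \cite[Section~3]{HK02fundamentalsolution}).

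The surface part is $\int_{\partial D}\tilde\Gamma(x,y)\eta g\,\rmd\mH^{d-1}(y)$. This is where the main difficulty lies. The key input is the upper Ahlfors regularity of the Lipschitz boundary, $\mH^{d-1}(\partial D\cap B_r(z))\le Cr^{d-1}$. To estimate $|v(x)-v(x')|$ with $\delta=|x-x'|$, split $\partial D$ into the parts inside and outside $B_{2\delta}(x)$.
\begin{itemize}
\item On the near part, use the pointwise bound $|\tilde\Gamma|\le C|x-y|^{2-d}$. A dyadic sum over annuli gives a contribution $C\delta$.
\item On the far part, use the mean value theorem together with the gradient bound. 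This gives $C\delta\int_{2\delta}^{R}r^{1-d}\,\rmd(r^{d-1})\le C\delta\log(1/\delta)$.
\end{itemize}
Both contributions are $\le C\delta^\beta$ for every $\beta<1$.

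If one only has the property~(H) bounds and not the gradient bounds, the fallback is to use the Hölder continuity in $x$ of $\tilde\Gamma(\cdot,y)$ away from the pole, with some exponent $\gamma$. This is available from De Giorgi-type estimates for systems satisfying (H), and it yields $\beta<\gamma$ in the far-part estimate above.

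In $d=2$ the kernel is logarithmic and the same splitting works verbatim.
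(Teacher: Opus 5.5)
Your proposal is correct and follows essentially the same route as the paper, which simply invokes the argument of \cite[Lemma~2.1]{ACS01FreeBoundaryCalderon}. That argument represents the localized right-hand side through the fundamental matrix $\tilde{\Gamma}$ with $\abs{\tilde{\Gamma}(x,y)}\le C\abs{x-y}^{2-d}$, controls the remaining homogeneous part by interior regularity \cite[Theorem~5.20]{GM12EllipticSystems}, and treats $d=2$ by adding one dimension; your bootstrap for the zeroth-order term and your near/far splitting of the single-layer potential via upper Ahlfors regularity of $\partial D$ (with gradient bounds on $\tilde{\Gamma}$ from smoothness of $\tilde{\calC}$, or the H\"older bounds of \cite{HK02fundamentalsolution} as a fallback) just make explicit the details the paper leaves to that reference.
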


The following lemma, concerning the Lipschitz regularity of solutions, can be proved using essentially the same argument as in \cite[Lemma~2.2]{KSS23Anisotropic}, which is itself a slight modification of the ideas in \cite[Lemma~2.2]{ACS01FreeBoundaryCalderon}. The only additional ingredient is the analyticity of solutions to elasticity systems with constant coefficients, see \cite{MN57AnalyticSolutionEllipticSystems}. 
In the case where $\calC_0$ is isotropic, this step can alternatively be replaced by the unique continuation principle (UCP) for the isotropic elasticity system \cite{DLW20UCPLame,LNUW11UCPLame,LW05UCPLame}. To the best of our knowledge, the question of whether the unique continuation property holds for general elasticity systems remains open. It is worth noting that unique continuation may fail for general elliptic systems, see \cite{KW16counterexampleUCP}.

\begin{lemma}\label{lem:Lipschitz-regularity} 
Suppose $u\in (H_{\rm loc}^{1}(\mR^{d}))^{d}$ satisfies 
\begin{equation*}
(\mL^{\tilde{\calC}}+\kappa^{2}\rho(x))u = f \mL^{d}\lfloor D + g \mH^{d-1}\lfloor\partial D ,\quad u|_{\mR^{d}\setminus\overline{D}}=0 
\end{equation*}
for some $f\in (L^{\infty}(D))^{d}$ and $g\in (L^{\infty}(\partial D))^{d}$. Then $u\in (C_{\rm loc}^{0,1})^{d}$ near $\partial D$.  
\end{lemma}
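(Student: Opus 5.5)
The plan follows the scheme of \cite[Lemma~2.2]{ACS01FreeBoundaryCalderon} and \cite[Lemma~2.2]{KSS23Anisotropic}. The goal is to show that, for $x_1\in\partial D$ in a fixed compact neighbourhood $K$ of the relevant boundary portion and all small $r$,
\begin{equation*}
\sup_{B_{r}(x_1)}\abs{u}\le Cr .
\end{equation*}
Combined with interior estimates, this gives Lipschitz continuity. Indeed, for $x\in D$ with $\delta=\operatorname{dist}(x,\partial D)$, the rescaled function $u_{\delta}(y)=u(x+\delta y)/\delta$ solves in $B_{1}$ an elliptic system with bounded right-hand side $\delta f(x+\delta\cdot)$. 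Its $L^{\infty}$ norm is bounded by the linear growth. The interior $C^{1,\gamma}$/Calder\'on-type estimate for systems \cite[Theorem~5.20]{GM12EllipticSystems} then gives $\abs{\nabla u(x)}\le C$. Outside $\overline{D}$ we have $u\equiv 0$. So everything reduces to the linear growth bound. By \Cref{lem:Hoelder-regularity}, $u$ is already continuous with $u=0$ on $\partial D$.

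To prove linear growth, I argue by contradiction. Set $S(r,x_1)=\sup_{B_r(x_1)}\abs{u}$. If linear growth fails, there are $x_j\in\partial D\cap K$ and $r_j\to0$ with $S(r_j,x_j)\ge j r_j$. After the standard dyadic selection, we may also assume $S(2^{k}r_j,x_j)\le 2^{k}S(r_j,x_j)$ for $0\le k\le k_j$ with $2^{k_j}r_j\to\infty$ relative to the scale. Define the blowups
\begin{equation*}
v_j(y)=\frac{u(x_j+r_jy)}{S(r_j,x_j)} .
\end{equation*}
Then $\sup_{B_1}\abs{v_j}=1$, $\abs{v_j}\le C\abs{y}$ for $\abs{y}\ge1$ in large balls, and $v_j(0)=0$. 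The functions $v_j$ satisfy
\begin{equation*}
\bigl(\mL^{\tilde{\calC}(x_j+r_j\cdot)}+r_j^2\kappa^2\rho\bigr)v_j=\frac{r_j^{2}}{S}f\,\chi_{D_j}+\frac{r_j}{S}\,g\,\mH^{d-1}\lfloor\partial D_j .
\end{equation*}
The coefficients in front of the sources are at most $r_j/j$ and $1/j$ respectively, so both tend to $0$. The surface measures are locally uniformly bounded because $\partial D$ is Lipschitz. Caccioppoli's inequality gives uniform $H^1_{\rm loc}$ bounds. Applying \Cref{lem:Hoelder-regularity} uniformly, whose constants depend only on $c$ and the $\tilde\Gamma$ bounds, gives uniform $C^{\beta}$ bounds. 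Passing to a subsequence, $\tilde{\calC}(x_j+r_j\cdot)\to\tilde{\calC}(x_*)$ is a constant tensor satisfying \eqref{eq:positivity}, and $v_j\to v$ locally uniformly. The limit $v$ solves the constant-coefficient system $\mL^{\tilde{\calC}(x_*)}v=0$ in $\mR^d$, with $\abs{v}\le C(1+\abs{y})$, $v(0)=0$ and $\sup_{B_1}\abs{v}=1$.

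The main obstacle is getting a contradiction from this limit, and this is where the constant-coefficient structure enters. By the analyticity of solutions of constant-coefficient elliptic systems \cite{MN57AnalyticSolutionEllipticSystems} and a Liouville argument (interior estimates at scale $R$ plus linear growth), $v$ is an affine vector field $v(y)=My$. Separately, $v$ vanishes on a nonempty open set. Because $D$ is Lipschitz and $\mR^d\setminus\overline D$ has interior cone points at every boundary point, $v_j=0$ on the rescaled exterior $(\mR^d\setminus\overline{D}-x_j)/r_j$, which contains a truncated cone of fixed aperture with vertex near $0$. Uniform convergence passes this vanishing to $v$. An affine (indeed analytic) map vanishing on an open cone vanishes identically, which contradicts $\sup_{B_1}\abs{v}=1$. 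In the isotropic case one may instead invoke the unique continuation results for the Lam\'e system \cite{DLW20UCPLame,LNUW11UCPLame,LW05UCPLame}.

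The delicate point is the uniformity in the compactness step: the H\"older and Caccioppoli constants must not deteriorate along the blowup. This needs the scaling invariance of the fundamental-matrix bound $\abs{\tilde\Gamma(x,y)}\le C\abs{x-y}^{2-d}$, whose constant depends only on $c$ and $d$. In $d=2$ I would add a dummy variable to reduce to $d=3$.
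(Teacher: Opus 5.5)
Your proposal is correct and follows essentially the same route as the paper, which defers to the blowup-by-contradiction argument of \cite[Lemma~2.2]{ACS01FreeBoundaryCalderon} and \cite[Lemma~2.2]{KSS23Anisotropic}: the rescaled sources vanish in the limit, the limit solves the frozen constant-coefficient system, and analyticity of its solutions \cite{MN57AnalyticSolutionEllipticSystems} forces it to be zero because it vanishes on an exterior cone. The only difference is cosmetic, and your maximal-ratio scale selection and Liouville step are not needed: the ACS dichotomy $S(2^{-j-1},x_1)\le\max\{C2^{-j},S(2^{-j},x_1)/2\}$ already produces a limit on $B_1$ with $\sup_{B_{1/2}}\abs{v}=1$, and analyticity on $B_1$ suffices to reach the contradiction.
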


Here we note that, following \cite[Lemmas~2.1 and 2.2]{ACS01FreeBoundaryCalderon}, we first prove \Cref{lem:Hoelder-regularity,lem:Lipschitz-regularity} for $d\ge3$. The case $d=2$ then follows by simply ``adding one dimension''.

For convenience and without loss of generality, we assume from now on that $x_0=0 \in \partial D$ and we denote $B_{R}=B_{R}(0)$ for all $R>0$. 
Consequently, following the approach in \cite[Lemma~2.1]{KSS25AnisotropicII}, we can derive the following lemma (the details are omitted).

\begin{lemma}\label{lem:Lipschitz}
Let $D$ be a bounded Lipschitz domain in $\mR^{d}$ with $0\in\partial D$. 
Let $q \in (L^{\infty}(B_{2}))^{d\times d}$, let $m\ge 0$ be an integer, and suppose that $u\in (H^{1}(B_{2}))^{d}$ solves 
\begin{equation}
(\mL^{\tilde{\calC}}+q(x))u = f\mL^{d}\lfloor B_{2} + g\mH^{d-1}\lfloor\partial D \text{ in $B_{2}$} ,\quad u|_{B_{2}\setminus\overline{D}} = 0 \label{eq:PDE-blowup}
\end{equation}
with $\abs{f(x)}\le C\abs{x}^{m}$ a.e. in $B_{2}$ and $\abs{g(x)}\le C\abs{x}^{m+1}$ for $\mH^{d-1}$-a.e. $x\in\partial D\cap B_{\epsilon}$. Then 
\begin{equation}
\abs{u(x)} + \abs{x}\abs{\nabla u(x)} \le C\abs{x}^{m+2} \quad \text{in $B_{1}$.} \label{eq:optimal-decay} 
\end{equation}
\end{lemma}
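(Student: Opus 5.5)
We argue by contradiction through a dyadic scaling scheme, following \cite[Lemma~2.1]{KSS25AnisotropicII}, which in turn adapts the optimal growth argument of free boundary theory. Write $S(r):=\sup_{B_r}\abs{u}$ and $S'(r):=r^{-1}\sup_{B_r}\abs{u}+\sup_{B_r}\abs{\nabla u}$, using the Lipschitz bound of \Cref{lem:Lipschitz-regularity} so that these quantities are finite for small $r$.

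The goal is the statement that $S(2^{-j-1})\le \max\{C_0 2^{-(j+1)(m+2)},\,2^{-(m+2)}S(2^{-j})\}$ for all $j$ large. Iterating this gives $\abs{u(x)}\le C\abs{x}^{m+2}$. The gradient part of \eqref{eq:optimal-decay} then follows by applying interior and boundary elliptic estimates to rescaled functions on annuli.

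To prove the dyadic estimate, suppose it fails along a sequence $j_k\to\infty$. Set the rescalings
\begin{equation*}
u_k(x):=\frac{u(2^{-j_k}x)}{S(2^{-j_k})} .
\end{equation*}
These satisfy $\sup_{B_1}\abs{u_k}=1$ and $\sup_{B_{1/2}}\abs{u_k}\ge 2^{-(m+2)}$. They solve
\begin{equation*}
\mL^{\tilde{\calC}(2^{-j_k}\cdot)}u_k+2^{-2j_k}q(2^{-j_k}\cdot)u_k=\frac{2^{-2j_k}}{S(2^{-j_k})}\Big(f(2^{-j_k}\cdot)+2^{j_k}g\,\mH^{d-1}\lfloor\partial D_k\Big),
\end{equation*}
where $D_k:=2^{j_k}D$. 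By the failure of the estimate, $S(2^{-j_k})\ge C_0 2^{-j_k(m+2)}$ with $C_0$ arbitrary. Combining this with $\abs{f}\le C\abs{x}^m$ and $\abs{g}\le C\abs{x}^{m+1}$, the right-hand side tends to zero, both in $L^\infty$ and as a measure on $\partial D_k\cap B_1$. Here we use the uniform Lipschitz character of the rescaled boundary.

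We then pass to the limit. The Hölder estimate of \Cref{lem:Hoelder-regularity} and the Lipschitz estimate of \Cref{lem:Lipschitz-regularity} are applied uniformly in $k$; they are uniform because the coefficients $\tilde{\calC}(2^{-j_k}\cdot)$ have the same ellipticity constant and converge to $\tilde{\calC}(0)$. This gives compactness, so $u_k\to u_0$ locally uniformly. The limit solves the constant-coefficient system $\mL^{\tilde{\calC}(0)}u_0=0$ in $B_1$ and vanishes on the limit exterior region, which contains an open cone since $D$ is Lipschitz. By analyticity of solutions to constant-coefficient elliptic systems \cite{MN57AnalyticSolutionEllipticSystems}, $u_0$ vanishing on an open set forces $u_0\equiv0$. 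This contradicts $\sup_{B_{1/2}}\abs{u_0}\ge 2^{-(m+2)}$.

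The main obstacle is the compactness step. It requires uniform estimates for the rescaled solutions, with control of the dyadic doubling; in particular one must bound $\sup_{B_2}\abs{u_k}$ by iterating the failed inequality, as in the scalar case. It also requires passing the surface term to zero weakly. Both steps rest on the regularity lemmas being scale-invariant for systems, which I would verify through the fundamental matrix bounds from \cite{HK02fundamentalsolution}.
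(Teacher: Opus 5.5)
Your proposal is correct and is essentially the argument the paper intends by referring to \cite[Lemma~2.1]{KSS25AnisotropicII}: a dyadic blowup/compactness contradiction in which the only system-specific input is that solutions of the constant-coefficient limit system $\mL^{\tilde{\calC}(0)}u_0=0$ are analytic \cite{MN57AnalyticSolutionEllipticSystems}, so vanishing on the rescaled exterior cone forces $u_0\equiv0$. Two small corrections: first, because you normalize by $S(2^{-j_k})$ and detect nondegeneracy on $B_{1/2}$, interior estimates on $B_{3/4}$ already suffice and no bound on $\sup_{B_2}\abs{u_k}$ is needed (a single failed inequality could not supply one anyway); second, the gradient part should come from the scale-invariant free-boundary Lipschitz estimate behind \Cref{lem:Lipschitz-regularity}, applied to $u(r\cdot)/r^{m+2}$ on $B_2$, rather than from classical boundary estimates, which give no Lipschitz bound on a merely Lipschitz domain.
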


Now let $u$ be the vector-valued function given in \Cref{lem:Lipschitz}. Note that the function $u_{r}(x):=u(rx)/r^{m+2}$ also satisfies the estimate \eqref{eq:optimal-decay}. In view of the Banach-Alaoglu theorem, we say that $v$ is a \emph{blowup limit of $u$ of order $m+2$ at $0$} if there is a sequence $r_{j}\rightarrow 0$ so that $u_{r_{j}}\rightarrow v$ in $(C^{0,1}(\overline{B_{1}}))^{d}$ weak-$\star$. 

Following \cite[Lemma~2.5]{SS25vanishingcontrast}, we assume that, there exists a nonnegative integer $m$ such that 
\begin{subequations} \label{eq:homogeneous-assumption1}
\begin{equation}
f = P + R 
\end{equation}
where $P\not\equiv0$ and each component $P_{i}$ either vanishes identically or is a nontrivial homogeneous polynomial of degree $m$ and $\abs{R(x)}\le C\abs{x}^{m+\alpha}$ for some $\alpha>0$ and 
\begin{equation}
\abs{g(x)} \le C\abs{x}^{m+1+\alpha} \text{ for $\mH^{d-1}$-a.e. $x\in\partial D\cap B_{2}$.}
\end{equation}
We also assume that 
\begin{equation}
\abs{\tilde{\calC}(x)-\calC_0} \le C\abs{x} ,\quad \max_{ijk\ell} \abs{\nabla \tilde{C}_{ijk\ell}} \le C. 
\end{equation}
\end{subequations} 
The following lemma can be proved using the exact same argument in \cite[Lemma~2.4]{KSS25AnisotropicII}, where the ideas are adapted from \cite[Lemma~4.1]{SS25vanishingcontrast}: 

\begin{lemma}\label{lem:non-degeneracy} 
Suppose that the assumptions of \Cref{lem:Lipschitz} as well as \eqref{eq:homogeneous-assumption1} hold. Then for any $\epsilon\in(0,1)$, there exists a pair of positive numbers $(r_{\epsilon},c_{\epsilon})$ such that 
\begin{equation*}
\norm{u}_{L^{\infty}(B_{\epsilon\abs{x}}(x))} \ge c_{\epsilon}\abs{x}^{m+2} \quad \text{for all $x\in\overline{D}\cap\overline{B_{r_{\epsilon}}}$}. 
\end{equation*} 
\end{lemma}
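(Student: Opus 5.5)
We argue by contradiction and use a blowup. Suppose the conclusion fails. Then there are $\epsilon\in(0,1)$ and a sequence $x_{j}\in\overline{D}$ with $x_{j}\to0$ such that
\begin{equation*}
\norm{u}_{L^{\infty}(B_{\epsilon\abs{x_{j}}}(x_{j}))}\le \frac{1}{j}\abs{x_{j}}^{m+2}.
\end{equation*}
Set $r_{j}=\abs{x_{j}}$ and rescale by $u_{j}(x):=u(r_{j}x)/r_{j}^{m+2}$. By \Cref{lem:Lipschitz}, the family $u_{j}$ is bounded in $(C^{0,1}(\overline{B_{1}}))^{d}$; the same bound holds on larger balls after adjusting the domain of the estimate. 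Passing to a subsequence, we may assume $x_{j}/r_{j}\to z$ with $\abs{z}=1$ and $u_{j}\to u_{0}$ locally uniformly. The smallness assumption then gives $u_{0}\equiv0$ on $B_{\epsilon}(z)$. We also pass to a limit in the rescaled domains $D_{j}=r_{j}^{-1}D$: their characteristic functions converge weakly-$\star$ in $L^{\infty}$ to some $0\le\theta\le1$.

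Next we identify the equation solved by $u_{j}$. Rescaling \eqref{eq:PDE-blowup} gives
\begin{equation*}
\mL^{\tilde{\calC}(r_{j}\cdot)}u_{j}+r_{j}^{2}q(r_{j}\cdot)u_{j}=r_{j}^{-m}f(r_{j}\cdot)\chi_{D_{j}}+r_{j}^{-m-1}g(r_{j}\cdot)\mH^{d-1}\lfloor\partial D_{j}.
\end{equation*}
We check each term against test functions $\psi\in (C_{c}^{\infty})^{d}$.
\begin{itemize}
\item By \eqref{eq:homogeneous-assumption1}, $\tilde{\calC}(r_{j}x)\to\calC_{0}$ uniformly on compacts, since the error is $O(r_{j})$.
\item The zeroth-order term is $O(r_{j}^{2})$.
\item The volume source tends to $P\theta$, because $r_{j}^{-m}R(r_{j}x)=O(r_{j}^{\alpha})$.
\item The surface term has total mass $O(r_{j}^{\alpha})$ on compacts. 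This uses $\abs{g}\le C\abs{x}^{m+1+\alpha}$ together with the uniform local bound on $\mH^{d-1}(\partial D_{j}\cap B_{R})$ for a Lipschitz domain under scaling.
\end{itemize}
Passing to the limit in the weak formulation, we obtain $\mL^{\calC_{0}}u_{0}=P\theta$ in the sense of distributions.

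Finally we derive the contradiction. Since $x_{j}\in\overline{D}$ and $D$ is Lipschitz, there is a cone of fixed aperture with vertex near $x_{j}$ lying inside $D$, and its size is comparable to $\epsilon r_{j}$. Hence $D_{j}\cap B_{\epsilon}(z)$ contains a ball $B'$ of radius comparable to $\epsilon$ for all large $j$, and $\theta=1$ on $B'$ in the limit. There $u_{0}=0$, so $0=\mL^{\calC_{0}}u_{0}=P$ on $B'$. Each component $P_{i}$ is a polynomial vanishing on an open set, so $P\equiv0$. This contradicts $P\not\equiv0$.

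The main obstacle is the uniform interior-cone (or interior-ball) property of $D_{j}$ near $x_{j}/r_{j}$, together with the convergence of the surface measure term. For a general bounded Lipschitz domain both follow from the uniform Lipschitz character near $0$ being invariant under dilations. The surface term is the one I would check most carefully.
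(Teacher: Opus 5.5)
Your proposal is correct and follows essentially the argument the paper relies on (it defers to \cite[Lemma~2.4]{KSS25AnisotropicII}, adapted from \cite[Lemma~4.1]{SS25vanishingcontrast}): argue by contradiction, rescale at $r_j=\abs{x_j}$ using the decay from \Cref{lem:Lipschitz}, and pass to a limit $u_0$ that solves $\mL^{\calC_0}u_0=P\theta$ and vanishes on $B_\epsilon(z)$; the uniform interior-cone (positive density) property of the Lipschitz domain at points of $\overline{D}$ then forces $P\equiv0$ on an open set. The steps you flagged go through as you describe, because dilations preserve the Lipschitz constant of the boundary graph: the bound on $\mH^{d-1}(\partial D_j\cap B_R)$ is uniform, so the surface term is $O(r_j^{\alpha})$, and a fixed ball $B'\subset D_j\cap B_\epsilon(z)$ exists for all large $j$.
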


With \Cref{lem:non-degeneracy} at hand, we can establish the following weak flatness properties for $\partial D$ at $0$, using the exactly same argument in \cite[Lemma~4.3]{SS25vanishingcontrast}: 

\begin{lemma}\label{lem:weak-flatness} 
If $u$ is as in \Cref{lem:Lipschitz} and if $v$ is a blowup limit that satisfies $\supp\,(\abs{v})=\overline{\mR_{+}^{d}}$, then for any $\delta>0$ there exists $r>0$ such that $\partial D\cap B_{r}\subset\left\{\abs{x_{d}}\le\delta r\right\}$. 
\end{lemma}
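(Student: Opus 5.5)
We argue by contradiction, using the non-degeneracy estimate of \Cref{lem:non-degeneracy} and the fact that blowups converge uniformly. Fix $\delta>0$ and suppose the conclusion fails. Then there is a sequence $r_j\to 0$ and points $y_j\in\partial D\cap B_{r_j}$ with $\abs{(y_j)_d}>\delta r_j$.

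Because $v$ is a blowup limit, there is some sequence along which $u_{s_j}\to v$. In the standard argument from \cite[Lemma~4.3]{SS25vanishingcontrast}, one either chooses the $r_j$ along the given blowup sequence or passes to a further subsequence. In what follows we assume the blowup sequence can be taken to be $s_j=r_j$ (after relabeling). This point needs care: strictly speaking the hypothesis concerns a single blowup limit $v$. If instead every blowup limit had support $\overline{\mR^d_+}$, the argument would go through along any sequence.

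Set $z_j:=y_j/r_j\in\overline{B_1}$. Then $\abs{(z_j)_d}\ge\delta$, and after extracting a subsequence $z_j\to z$ with $\abs{z_d}\ge\delta$.

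Since $y_j\in\partial D\subset\overline D$ and $\abs{y_j}\le r_j$ is small, \Cref{lem:non-degeneracy} with a small $\epsilon$ (say $\epsilon<\delta/2$) gives
\[
\norm{u}_{L^\infty(B_{\epsilon\abs{y_j}}(y_j))}\ge c_\epsilon\abs{y_j}^{m+2}.
\]
Since $\abs{y_j}\ge\abs{(y_j)_d}>\delta r_j$, rescaling yields
\[
\norm{u_{r_j}}_{L^\infty(B_{\epsilon\abs{z_j}}(z_j))}\ge c_\epsilon\abs{z_j}^{m+2}\ge c_\epsilon\delta^{m+2}.
\]
The functions $u_{r_j}$ are uniformly Lipschitz on $\overline{B_1}$ by \eqref{eq:optimal-decay}. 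By Arzel\`a--Ascoli, weak-$\star$ convergence in $C^{0,1}$ therefore upgrades to uniform convergence. Passing to the limit,
\[
\norm{v}_{L^\infty(B_{2\epsilon}(z))}\ge c_\epsilon\delta^{m+2}>0.
\]
If $z$ lies near $\partial B_1$, we first enlarge the domain. The estimate \eqref{eq:optimal-decay} holds on $B_1$, so we apply the argument to $u_{r_j/2}$, or simply work on $B_{1/2}$ with radii $r_j/2$.

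This lower bound alone is not a contradiction when $z_d>0$, since $v$ may be nonzero there. So we also use the exterior information. Recall $u=0$ outside $\overline D$, and $y_j\in\partial D$.

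If $z_d\le-\delta$, the ball $B_{2\epsilon}(z)$ lies in $\{x_d<0\}$, where $v\equiv 0$ by the support hypothesis. This contradicts the lower bound.

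If $z_d\ge\delta$, we instead use that $v$ vanishes on no open subset of $\mR^d_+$. This is where the case is genuinely different from the first one. We note that $v$ solves a constant-coefficient system, $\mL^{\calC_0}v=P\chi_{\{v\ne 0\}}$ in a suitable sense, since the Bernoulli term is of order $\abs{x}^{m+1+\alpha}$ and disappears under the blowup. Interior analyticity \cite{MN57AnalyticSolutionEllipticSystems}, applied to $v$ minus a polynomial particular solution, would show that a zero set of positive measure inside the open half space is impossible.

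On the other hand, $y_j\in\partial D$ together with the exterior condition means that points of $\mR^d\setminus\overline D$ lie arbitrarily close to $y_j$. Near $y_j$, the Lipschitz bound alone does not give vanishing on a whole ball. The plan is to invoke the non-degeneracy argument of \Cref{lem:non-degeneracy} in its complementary form. That argument applies $\mL^{\tilde\calC}$ to an auxiliary function and uses that $f\approx P\ne0$ on $D$, and it should force $D$ to have density bounded below near interior points of $\{v\ne 0\}$. The same reasoning should force $\mR^d\setminus D$ to occupy a positive-measure portion of $B_{\epsilon r_j}(y_j)$, unless $\partial D$ is flat there. Making this exterior-density step rigorous for systems is the main obstacle.

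My first attempt at it would be as follows. In the rescaled limit, $\chi_{D_{r_j}}\to\chi_{\{x_d>0\}}$ in $L^1_{\rm loc}$. This follows by testing the equation for $u_{r_j}$ against $\psi\in C_c^\infty$: the left side converges to $\mL^{\calC_0}v$, while the right side converges to $P\lim\chi_{D_{r_j}}$. Since $P\neq 0$ almost everywhere, the limit density of $D_{r_j}$ is identified as $\chi_{\{v\ne0\}}=\chi_{\mR^d_+}$ almost everywhere.

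With this in hand, the exterior of $D$ near $z$ (with $z_d\ge\delta$) has vanishing density. Combining this with $y_j\in\partial D$ and the Lipschitz character of $D$, which gives a uniform exterior cone at boundary points, yields a contradiction.
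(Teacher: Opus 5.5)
Your lower case ($z_d\le-\delta$) is correct: \Cref{lem:non-degeneracy} at $y_j\in\partial D\subset\overline D$, rescaling and uniform convergence give $v\not\equiv0$ in $B_{2\epsilon}(z)\subset\{x_d<0\}$. Your worry about the blowup sequence is unfounded. The lemma only asserts that one $r$ exists, so its negation gives bad points $y_j$ at \emph{every} scale, in particular at each radius $s_j$ of the sequence defining $v$.

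The gap is the case $z_d\ge\delta$. Your route needs $\chi_{D_{r_j}}\to\chi_{\mR^d_+}$ in $L^1_{\rm loc}$, i.e.\ that the weak-$\star$ limit $\theta$ of $\chi_{D_{r_j}}$ equals $1$ a.e.\ in $\mR^d_+$. Testing the equation only yields $\mL^{\calC_0}v=P\theta$. Identifying $\theta=\chi_{\{v\neq0\}}$ needs further input that you do not give: $\theta=1$ on the open set $\{v\neq0\}$ because $u=0$ off $\overline D$, and $D^2v=0$ a.e.\ on $\{v=0\}$ for $v\in W^{2,p}_{\rm loc}$. Even then, $\supp\,(\abs{v})=\overline{\mR^d_+}$ only says that $\{v\neq0\}$ is dense in $\mR^d_+$, not that it has full measure. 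Your analyticity argument against a positive-measure zero set is circular. The function $v$ minus a polynomial particular solution is analytic on an open $U\subset\mR^d_+$ only if $\mL^{\calC_0}v=P$ on $U$, i.e.\ only if $\theta=1$ a.e.\ on $U$, which is exactly what you are trying to prove. So the contradiction in this case is not established.

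The missing observation is that the Lipschitz regularity of $D$ (assumed in \Cref{lem:Lipschitz}) directly gives a whole ball on which $u$ vanishes. No density argument or `complementary non-degeneracy' for systems is needed. Near $0$, $D$ is the supergraph of a Lipschitz function in a fixed neighbourhood $B_{\rho_0}$. Hence each $y\in\partial D\cap B_{\rho_0/2}$ has an exterior cone of fixed aperture and height comparable to $\rho_0$. So there are $c\in(0,1)$, depending only on the Lipschitz constant, and points $w_j$ with $B_{c\epsilon r_j}(w_j)\subset B_{\epsilon r_j}(y_j)\setminus\overline D$ for $j$ large.

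Thus $u_{r_j}\equiv0$ on $B_{c\epsilon}(w_j/r_j)$. Along a subsequence $w_j/r_j\to w$ with $\abs{w-z}\le\epsilon$, and uniform convergence gives $v\equiv0$ on $B_{c\epsilon}(w)$. This ball lies in $\{x_d>0\}$ once $\epsilon<\delta/2$. Since $\supp\,(\abs{v})$ is the complement of the largest open set on which $v$ vanishes, this directly contradicts $\supp\,(\abs{v})=\overline{\mR^d_+}$.

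With this replacement your proof is complete. It is then the standard two-sided argument that the paper imports from \cite[Lemma~4.3]{SS25vanishingcontrast}: non-degeneracy excludes boundary points below the hyperplane, and exterior cones exclude them above.
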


\subsection{The two-dimensional piecewise \texorpdfstring{$C^{1}$}{C1} obstacle case\label{subsec:2D-case}} 

We begin with the case $d=2$. 
To follow the argument in \cite[Lemma~2.2]{KSS25AnisotropicII}, one needs the homogeneity of the blowup limit, which is obtained via a balanced energy functional and its monotonicity formula. However, for general anisotropic $\tilde{\calC}$, the operator $\mL^{\tilde{\calC}}$ is not rotationally invariant (cf. \Cref{rem:rotational-symmetry} above), making this approach more involved. 
Instead, we follow the approach in \cite[Section~3]{KSS25AnisotropicII}, which avoids the use of a monotonicity functional and instead exploits the relatively simple geometry that $\partial D$ is piecewise $C^{1}$.

Since $\partial D$ is piecewise $C^{1}$, there exists $r_{0}>0$ such that $\partial D\cap B_{r_{0}}=(\Gamma_{-}\cup\Gamma_{+})\cap B_{r_{0}}$, where the $C^{1}$ interfaces $\Gamma_{\pm}$ intersect at $x_{0}=0$. 
By \Cref{rem:rotational-symmetry}, after replacing $\tilde{\calC}$ and $u$ by their rotated versions, we may assume without loss of generality that $\Gamma_{-}\cap B_{r_{0}}\subset\{x_{1}\le 0\}$ and $\Gamma_{+}\cap B_{r_{0}}\subset\{x_{1}\ge 0\}$. Since the normal vector $\nu$ is a continuous vector field on each $\Gamma_{\pm}$, we can define 
\begin{equation*}
\nu_{-} := \lim_{\Gamma_{-}\ni x\rightarrow 0}\nu(x) ,\quad \nu_{+} := \lim_{\Gamma_{+}\ni x\rightarrow 0}\nu(x), 
\end{equation*}
and introduce the straight lines $\tilde{\Gamma}_{-}\subset\{x_{1}\le 0\}$ and $\tilde{\Gamma}_{+}\subset\{x_{1}\ge 0\}$, each perpendicular to $\nu_{-}$ and $\nu_{+}$, respectively. Let $\mathfrak{C}$ be the cone in $\mR^{2}$ such that $\partial\mathfrak{C}=\tilde{\Gamma}_{-}\cup\tilde{\Gamma}_{+}$ and $\mathfrak{C}$ corresponds to the blowup of $D$ at $0$. 
Any blowup limit $v$ of order $m+2$ solves the equation 
\begin{equation}
\mL^{\calC_0}v = H \chi_{\mathfrak{C}} \text{ in $\mR^{2}$} ,\quad v|_{\mR^{2}\setminus\overline{\mathfrak{C}}}=0,  \label{eq:blowup-limit-PDE1}
\end{equation}
where $H \not\equiv 0$ and each component $H_{i}$ vanishes identically or is a nontrivial harmonic homogeneous polynomial of degree $m$. By the interior $L^{p}$-estimates (Calder{\'o}n-Zygmund-type estimates) for solutions of elliptic systems, which may be established via potential theory (see e.g.\ \cite[Proposition A.1]{JulinLiimatainenSalo2017}) or Stampacchia's interpolation theorem \cite{Campanato1966Stampacchia,Stampacchia1965Interpolation} or \cite[Theorem~7.3]{GM12EllipticSystems}, we obtain $v\in (W_{\rm loc}^{2,p}(\mR^{2}))^{2}$ for all $1<p<\infty$. Consequently, $v$ admits a representative in $(C_{\rm loc}^{1,\alpha}(\mR^{2}))^{2}$ for every $0<\alpha<1$. Thus, $v$ satisfies the boundary conditions  
\begin{equation*}
v|_{\partial\mathfrak{C}} = 0 \quad\text{and}\quad \nabla v|_{\partial\mathfrak{C}} = 0. 
\end{equation*}
At this stage, it is not yet known whether $v$ is homogeneous of degree $m+2$. Nevertheless, this property is not needed for the proof in the present setting.

We now assume that $\calC_{0}$ is isotropic, namely, that it is of the form \eqref{eq:isotropic}. 
We first study the case where $\supp(\abs{v})$ is a half space $\{\omega\cdot x > 0\}$ for some fixed $\omega\in\mR^{d}$ with $\abs{\omega}=1$. This case is of particular interest, as we expect $\partial D$ to be regular near the origin. 
In view of the rotational symmetry of $\mL^{\calC_0}$ (cf. \Cref{rem:rotational-symmetry} above), it suffices to consider the choice $\omega=e_{2}$.

We now proceed to prove the following lemma.

\begin{lemma}\label{lem:blowup-wellposedness1}
Let $H\not\equiv0$ be the vector-valued function defined in \eqref{eq:blowup-limit-PDE1}. Consider the system 
\begin{equation}
\left\{\begin{aligned}
& \mL^{\calC_0}v = H \text{ in the half space $\{x_{2}>0\}$,} \\ 
& v|_{\{x_{2}= 0\}} = \partial_{2} v|_{\{x_{2}=0\}} = 0. 
\end{aligned}\right. \label{eq:decoupled-observation}
\end{equation}
This system admits a unique solution $v\in (H_{\rm loc}^{1}(\{x_{2}
>0\}))^{2}$. Moreover, each component of $v$ is a homogeneous polynomial of degree $m+2$. 
\end{lemma}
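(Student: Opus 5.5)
The plan is to treat \eqref{eq:decoupled-observation} as a \emph{non-characteristic Cauchy problem} for the constant-coefficient Lam\'e system. I would prove existence by an explicit power-series construction in $x_{2}$, which terminates and gives a polynomial. I would prove uniqueness by extending by zero across $\{x_{2}=0\}$ and using analyticity of solutions of constant-coefficient elliptic systems \cite{MN57AnalyticSolutionEllipticSystems}.

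\emph{Existence and homogeneity.} Write $\mL^{\calC_0}v=\mu\Delta v+(\lambda+\mu)\nabla(\nabla\cdot v)$ in components:
\begin{equation*}
(\mL^{\calC_0}v)_{1} = (\lambda+2\mu)\partial_{1}^{2}v_{1}+\mu\partial_{2}^{2}v_{1}+(\lambda+\mu)\partial_{1}\partial_{2}v_{2},\quad (\mL^{\calC_0}v)_{2} = \mu\partial_{1}^{2}v_{2}+(\lambda+2\mu)\partial_{2}^{2}v_{2}+(\lambda+\mu)\partial_{1}\partial_{2}v_{1}.
\end{equation*}
The coefficient matrix of $\partial_{2}^{2}$ is $M=\mathrm{diag}(\mu,\lambda+2\mu)$, which is invertible because $\mu>0$ and $\lambda+2\mu>0$; this is exactly the statement that $\{x_{2}=0\}$ is non-characteristic. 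I would make the ansatz $v(x)=\sum_{k\ge 0}x_{2}^{k}a_{k}(x_{1})$ with $a_{0}=a_{1}=0$, as forced by the Cauchy data. I would also expand $H(x)=\sum_{k=0}^{m}x_{2}^{k}h_{k}(x_{1})$, where $h_{k}$ is a monomial of degree $m-k$. Matching powers of $x_{2}$ gives a recursion of the form
\begin{equation*}
(k+2)(k+1)M a_{k+2} = h_{k} - B_{0}\,a_{k}'' - (k+1)B_{1}\,a_{k+1}',
\end{equation*}
with constant matrices $B_{0}=\mathrm{diag}(\lambda+2\mu,\mu)$ and $B_{1}=(\lambda+\mu)\begin{pmatrix}0&1\\1&0\end{pmatrix}$ (with $h_{k}:=0$ for $k>m$). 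By induction, $a_{k}$ is a (vector) multiple of $x_{1}^{m+2-k}$ for $k\le m+2$. For $k>m+2$, each term on the right-hand side involves a derivative of a polynomial of degree $<0$ or of a constant, or an $h_{k}$ with $k>m$, so $a_{k}=0$. Hence the series terminates and $v$ is a polynomial, each of whose components is homogeneous of degree $m+2$ (or vanishes). This $v$ satisfies \eqref{eq:decoupled-observation} by construction. It is nontrivial: since $H\not\equiv 0$, some $h_{k}\neq 0$, and then applying $\mL^{\calC_0}$ shows $v\not\equiv 0$ anyway.

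\emph{Uniqueness.} Let $w$ be the difference of two solutions, so that $\mL^{\calC_0}w=0$ in $\{x_{2}>0\}$ with $w=\partial_{2}w=0$ on $\{x_{2}=0\}$. Interpreting the boundary conditions in the trace sense requires regularity up to the boundary. In the application, $v\in W^{2,p}_{\rm loc}$ as observed just before the lemma, so I would work in that class, or else first invoke boundary regularity for constant-coefficient systems. Define $\tilde w=w$ in $\{x_{2}>0\}$ and $\tilde w=0$ in $\{x_{2}<0\}$. Because both $w$ and $\nabla w$ vanish on $\{x_{2}=0\}$, both the jump and the traction jump $e_{2}\cdot(\calC_0:\nabla\otimes\tilde w)$ across the line vanish. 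The weak formulation (integrating by parts as in the derivation of \eqref{eq:nonscattering-Bernuolli}) then shows $\mL^{\calC_0}\tilde w=0$ in all of $\mR^{2}$. By \cite{MN57AnalyticSolutionEllipticSystems}, $\tilde w$ is real analytic, and it vanishes on an open set, so $\tilde w\equiv 0$ and hence $w\equiv0$. Alternatively, in the isotropic case one can apply the UCP \cite{LW05UCPLame}.

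\emph{Main obstacle.} The step requiring the most care is the uniqueness statement as literally posed, with only $v\in H^{1}_{\rm loc}$ of the open half space. There the meaning of $\partial_{2}v|_{\{x_{2}=0\}}=0$ is not clear without boundary regularity. I would resolve this by stating that the Cauchy data are attained in the $W^{2,p}_{\rm loc}$ (hence $C^{1}$) sense near the boundary, which is what the blowup limit provides. The existence part is a routine but bookkeeping-heavy check that the recursion closes with the correct degrees.
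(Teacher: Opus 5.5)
Your argument is correct. The uniqueness half coincides with the paper's: extend by zero, note that both the displacement jump and the traction jump vanish, and conclude by analyticity \cite{MN57AnalyticSolutionEllipticSystems}. The paper states the Cauchy data as $v=\mT^{\calC_0}v=0$, which is equivalent to yours because $v|_{\{x_2=0\}}=0$ forces $\partial_1 v=0$ there.

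For existence the routes genuinely differ. The paper solves nothing explicitly. It notes that $\mL^{\calC_0}$ maps $(x_2^2\mathcal{P}_m)^2$ into $(\mathcal{P}_m)^2$, that this linear map is injective by the uniqueness argument, and that both spaces have the same finite dimension, so the map is onto. You instead run a terminating Cauchy--Kovalevskaya recursion in powers of $x_2$. Its only input is the invertibility of the $\partial_2^2$-coefficient matrix $\mathrm{diag}(\mu,\lambda+2\mu)$, i.e.\ the fact that $\{x_2=0\}$ is non-characteristic.

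The paper's argument is shorter, free of bookkeeping, and carries over verbatim to $d\ge 3$; it is reused in \Cref{lem:high-dimen-poly}. However, it makes existence depend on the uniqueness theorem. Moreover, since its $\mathcal{P}_m$ here denotes polynomials of degree \emph{at most} $m$, homogeneity needs one more line: the homogeneous parts of the preimage of degree $\neq m+2$ are mapped to $0$, so they vanish by injectivity.

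Your recursion gives existence independently of analyticity and produces $v$ explicitly. It also yields homogeneity of degree $m+2$ directly from the degree count $a_k\propto x_1^{m+2-k}$.

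Your remark about the function class is a fair sharpening of the statement as written. The Cauchy data have no meaning for a bare $H^1_{\rm loc}$ function on the open half plane, and one should instead use the $W^{2,p}_{\rm loc}$ regularity available for blowup limits.
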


\begin{proof}[Proof of \Cref{lem:blowup-wellposedness1}] 
Uniqueness follows because any $v \in (H_{\rm loc}^{1}(\{x_{2}>0\}))^{2}$ satisfying $\mL^{\calC_{0}}v=0$ in $\{x_{2}>0\}$ with $v|_{\{x_{2}=0\}} = \mT^{\calC_{0}}v|_{\{x_{2}=0\}} = 0$ can be extended by zero to obtain a solution of 
\begin{equation*}
\mL^{\calC_0}v=0 \text{ in $\mR^{2}$} ,\quad v=0\text{ in $\{x_{2}\le 0\}$.}
\end{equation*}
By analyticity of $v$ \cite{MN57AnalyticSolutionEllipticSystems}, we have $v\equiv 0$.\footnote{If $\calC_0$ is isotropic, one can alternatively apply the unique continuation principle (UCP) for the isotropic elasticity system.}

For the existence part, let $\mathcal{P}_{k}$ denote the space of polynomials of degree at most $k$. The operator 
\begin{equation}
\mL^{\calC_{0}} : (x_{2}^{2}\mathcal{P}_{m})^{2} \rightarrow (\mathcal{P}_{m})^{2} \label{eq:bijective-L}
\end{equation}
is linear and injective by the uniqueness argument above. Since it acts between finite-dimensional spaces of equal dimension, it follows that \eqref{eq:bijective-L} is also surjective, and hence a bijection. This establishes the existence and uniqueness of the solution. 
\end{proof}

Since $\calC_{0}$ is isotropic, \eqref{eq:decoupled-observation} takes the form  
\begin{equation}
\mu \Delta v + (\lambda+\mu)\nabla(\nabla\cdot v) = H \text{ in $\{x_2>0\}$} ,\quad  v|_{\{x_{2}=0\}}=\partial_{x_{2}}v|_{\{x_{2}=0\}} = 0. \label{eq:Lame-2D} 
\end{equation}
Before giving a characterization of $v$, let us first examine the general solution for $m\in\{0,1\}$. 

\begin{lemma}\label{lem:general-solution}
Let $m\in\{0,1\}$. Assume that $H$ admits the $\theta$-Fourier representation 
\begin{equation}
\begin{aligned}
H(re^{\bfi\theta}) &= \left[ \bfa + \frac{m}{2}\frac{\lambda+\mu}{\lambda+3\mu} \left(\begin{array}{cc}1&\bfi\\\bfi&-1\end{array}\right)\bfb \right]r^{m}e^{-\bfi m\theta} \\
&\quad + \left[\bfb + \frac{m}{2}\frac{\lambda+\mu}{\lambda+3\mu} \left(\begin{array}{cc}1&-\bfi\\-\bfi&-1\end{array}\right)\bfa \right]r^{m}e^{\bfi m\theta}
\end{aligned} \label{eq:H}
\end{equation}
for some constant vectors $\bfa,\bfb\in\mC^{2}$, see \Cref{rem:representation-H} below. 
For each open interval  $I\subset(0,2\pi)$, the general solution to 
\begin{equation}
\mu \Delta w + (\lambda+\mu)\nabla(\nabla\cdot w) = H \label{eq:general-solution}
\end{equation}
in the sector $\{ (x_{1},x_{2})\in\mR^{2} : x_{1}+\bfi x_{2}=re^{\bfi\theta} : \theta\in I, r>0 \}$ is given by 
\begin{equation*}
\begin{aligned}
w(re^{\bfi\theta}) &= \bfc r^{m+2}e^{-\bfi(m+2)\theta} -\frac{\lambda+\mu}{2(\lambda +3\mu)}(m+2)\left(\begin{array}{cc}1&-\bfi \\ -\bfi&-1\end{array}\right)\bfc r^{m+2}e^{-\bfi m\theta} \\ 
&\quad + \bfd r^{m+2}e^{\bfi(m+2)\theta} - \frac{\lambda+\mu}{2(\lambda +3\mu)}(m+2)\left(\begin{array}{cc}1&\bfi \\ \bfi&-1\end{array}\right)\bfd r^{m+2}e^{\bfi m\theta} \\ 
&\quad + \frac{1}{2(\lambda+3\mu)(m+1)}\left[ \bfa r^{m+2}e^{-\bfi m\theta} + \bfb r^{m+2}e^{\bfi m\theta} \right] 
\end{aligned}
\end{equation*} 
for constant vectors $\bfc,\bfd\in\mC^{2}$. 
\end{lemma}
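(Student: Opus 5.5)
The plan is to work in complex notation throughout. Write $z=x_{1}+\bfi x_{2}=re^{\bfi\theta}$, and use
\[
\partial_{z}=\tfrac12(\partial_{1}-\bfi\partial_{2}),\qquad \partial_{\bar z}=\tfrac12(\partial_{1}+\bfi\partial_{2}),
\]
so that $\partial_{1}=\partial_{z}+\partial_{\bar z}$, $\partial_{2}=\bfi(\partial_{z}-\partial_{\bar z})$ and $\Delta=4\partial_{z}\partial_{\bar z}$. Every function in the statement is a combination of monomials. For instance $r^{m+2}e^{-\bfi(m+2)\theta}=\bar z^{m+2}$, $r^{m+2}e^{\bfi(m+2)\theta}=z^{m+2}$, $r^{m+2}e^{-\bfi m\theta}=z\bar z^{m+1}$, $r^{m+2}e^{\bfi m\theta}=z^{m+1}\bar z$, and $r^{m}e^{\mp \bfi m\theta}=\bar z^{m},z^{m}$. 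The Lamé operator $\mu\Delta w+(\lambda+\mu)\nabla(\nabla\cdot w)$ is therefore computed on monomials $z^{p}\bar z^{q}\mathbf{e}$ with $\mathbf{e}\in\mC^{2}$ by elementary differentiation.

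The key algebraic identity comes from writing $\nabla(\nabla\cdot w)$ in these coordinates. We have $\nabla\cdot w=\partial_{z}(w_{1}+\bfi w_{2})+\partial_{\bar z}(w_{1}-\bfi w_{2})$, and $\nabla\phi=(\partial_{z}\phi+\partial_{\bar z}\phi,\ \bfi(\partial_{z}\phi-\partial_{\bar z}\phi))$. This is where the matrices $\begin{pmatrix}1&\pm\bfi\\ \pm\bfi&-1\end{pmatrix}$ come from: they are the rank-one matrices $(1,\pm\bfi)^{\intercal}(1,\pm\bfi)$ coupling the $\partial_{z}\partial_{z}$ and $\partial_{\bar z}\partial_{\bar z}$ parts of the grad–div term. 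With this in hand the proof proceeds in three steps.

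First, compute $\mL^{\calC_0}$ on $z^{p}\bar z^{q}\mathbf e$ for the relevant pairs $(p,q)$: $(0,m+2)$, $(m+2,0)$, $(1,m+1)$ and $(m+1,1)$.

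Second, verify that the $\bfc$-block is a homogeneous solution. The Laplacian kills $\bar z^{m+2}\bfc$, and the grad–div term produces a multiple of $(m+2)(m+1)\begin{pmatrix}1&-\bfi\\-\bfi&-1\end{pmatrix}\bfc\,\bar z^{m}$. The correction term $z\bar z^{m+1}$ is exactly tuned to cancel this. Its Laplacian gives $4(m+1)\bar z^{m}$, and its grad–div part contributes a further factor involving $\lambda+\mu$. Here one uses that the matrix $M=\begin{pmatrix}1&-\bfi\\-\bfi&-1\end{pmatrix}$ is nilpotent ($M^{2}=0$), together with $\begin{pmatrix}1&\bfi\\\bfi&-1\end{pmatrix}M=2\begin{pmatrix}1&-\bfi\\ \bfi&1\end{pmatrix}$-type relations. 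These are what generate the denominator $\lambda+3\mu=2\mu+(\lambda+\mu)$. The $\bfd$-block follows by complex conjugation symmetry, interchanging $z$ and $\bar z$.

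Third, verify the particular solution: apply the operator to $\frac{1}{2(\lambda+3\mu)(m+1)}\bigl[\bfa z\bar z^{m+1}+\bfb z^{m+1}\bar z\bigr]$ and check that it reproduces \eqref{eq:H}. This includes the cross terms $\frac m2\frac{\lambda+\mu}{\lambda+3\mu}(\cdots)\bfb$, which arise because for $m=1$ the grad–div part of $z^{2}\bar z$ lands on $\bar z$ with the conjugate matrix. For $m=0$ these cross terms vanish, consistent with the factor $m$.

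For "general solution", I would argue as follows. The difference of two solutions solves the homogeneous Lamé system in the sector. Restricting to solutions homogeneous of degree $m+2$ (the class relevant for the blowup, where $w=r^{m+2}\Phi(\theta)$), the system becomes a linear second-order ODE system in $\theta$ for $\Phi\in\mC^{2}$. Its leading coefficient matrix is invertible by ellipticity, since the determinant is $\mu(\lambda+2\mu)\neq0$. So the solution space on any interval $I$ is exactly $4$-dimensional. The $\bfc,\bfd$ family gives a $4$-parameter family, and I would check it is linearly independent, because the frequencies $e^{\mp\bfi(m+2)\theta}$ are distinct from $e^{\pm\bfi m\theta}$ when $m\in\{0,1\}$. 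Hence it is the full kernel.

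The main obstacle is the bookkeeping in the second and third steps: getting the constants $\frac{(\lambda+\mu)(m+2)}{2(\lambda+3\mu)}$ and $\frac{1}{2(\lambda+3\mu)(m+1)}$ exactly right. A further point is ensuring there is no resonance (no $\log r$ or $\theta$-polynomial terms). Resonance would occur if a forcing frequency coincided with a kernel frequency, and this is excluded precisely because $m\neq m+2$. I would first do the $m=0$ case by hand as a sanity check, where $H=\bfa+\bfb$ is constant and $w$ is quadratic, before the $m=1$ case.
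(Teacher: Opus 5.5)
Your proposal is correct and is essentially the paper's proof. The paper also writes the Lam\'e operator in Wirtinger form as $2(\lambda+3\mu)\partial_{z}\partial_{\bar z}$ plus $(\lambda+\mu)$ times the rank-one matrices $(1,\pm\bfi)^{\intercal}(1,\pm\bfi)$ acting on $\partial_{z}^{2}$ and $\partial_{\bar z}^{2}$. It then evaluates this on $\bar z^{m+2}\bfc$, on $z\bar z^{m+1}$ times the nilpotent matrix, and on $z\bar z^{m+1}\bfa$, $z^{m+1}\bar z\bfb$, exactly as in your second and third steps. Two small corrections: the coefficient $\lambda+3\mu$ actually comes from the mixed $\partial_{z}\partial_{\bar z}$ part of grad--div, and the product relation between the two matrices is only needed in the $m\ge 2$ lemma.

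The paper's proof consists only of this verification. Your ODE dimension count in $\theta$ is therefore a correct addition that makes ``general'' precise among solutions homogeneous of degree $m+2$. Note, though, that the paper never uses such completeness. It does not know the 2D blowup is homogeneous, and instead matches $v$ to this family via unique continuation from zero Cauchy data on a ray.
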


We also examine the general solution for $m\ge 2$. 

\begin{lemma}\label{lem:general-solution-higher-order} 
Let $m\ge 2$ be an integer. Assume that $H$ admits the $\theta$-Fourier representation 
\begin{equation}
\begin{aligned}
H(re^{\bfi\theta}) &= \left[ \id - \frac{1}{2}\left(\frac{\lambda+\mu}{\lambda+3\mu}\right)^{2}\left(\begin{array}{cc}1&-\bfi\\\bfi&1\end{array}\right) \right] \bfa r^{m}e^{-\bfi m\theta} \\ 
&\quad + \left[ \id - \frac{1}{2}\left(\frac{\lambda+\mu}{\lambda+3\mu}\right)^{2}\left(\begin{array}{cc}1&\bfi\\-\bfi&1\end{array}\right) \right] \bfb r^{m}e^{\bfi m\theta} 
\end{aligned} \label{eq:H-higher-order}
\end{equation}
for some constant vectors $\bfa,\bfb\in\mC^{2}$, see \Cref{rem:representation-H-higher-order} below. For each open interval  $I\subset(0,2\pi)$, the general solution to \eqref{eq:general-solution} in the sector $\{ (x_{1},x_{2})\in\mR^{2} : x_{1}+\bfi x_{2}=re^{\bfi\theta} : \theta\in I, r>0 \}$ is given by 
\begin{equation*}
\begin{aligned}
w(re^{\bfi\theta}) &= \bfc r^{m+2}e^{-\bfi(m+2)\theta} -\frac{\lambda+\mu}{2(\lambda +3\mu)}(m+2)\left(\begin{array}{cc}1&-\bfi \\ -\bfi&-1\end{array}\right)\bfc r^{m+2}e^{-\bfi m\theta} \\ 
&\quad + \bfd r^{m+2}e^{\bfi(m+2)\theta} - \frac{\lambda+\mu}{2(\lambda +3\mu)}(m+2)\left(\begin{array}{cc}1&\bfi \\ \bfi&-1\end{array}\right)\bfd r^{m+2}e^{\bfi m\theta} \\ 
&\quad + \frac{1}{2(\lambda+3\mu)(m+1)}\bfa r^{m+2}e^{-\bfi m\theta} - \frac{\lambda+\mu}{8(\lambda+3\mu)^{2}}\left(\begin{array}{cc}1&-\bfi\\-\bfi&-1\end{array}\right)\bfa r^{m+2}e^{-\bfi(m-2)\theta} \\ 
&\quad + \frac{1}{2(\lambda+3\mu)(m+1)}\bfb r^{m+2}e^{\bfi m\theta} - \frac{\lambda+\mu}{8(\lambda+3\mu)^{2}}\left(\begin{array}{cc}1&\bfi\\\bfi&-1\end{array}\right)\bfb r^{m+2}e^{\bfi(m-2)\theta}
\end{aligned}
\end{equation*} 
for constant vectors $\bfc,\bfd\in\mC^{2}$. 
\end{lemma}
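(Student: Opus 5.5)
The plan is to split the statement into a particular solution plus the homogeneous solutions. Since the claimed formula is a sum of the homogeneous part (terms in $\bfc,\bfd$) and a particular part (terms in $\bfa,\bfb$), we verify each separately. To do so we rewrite the Lamé operator in complex coordinates. With $z=x_1+\bfi x_2=re^{\bfi\theta}$, we have $\Delta=4\p_z\p_{\bar z}$ and $\nabla=(\p_z+\p_{\bar z},\,\bfi(\p_z-\p_{\bar z}))^{\intercal}$. Functions of the form $r^{m+2}e^{\bfi k\theta}$ are $z^{p}\bar z^{q}$ with $p+q=m+2$ and $p-q=k$. Hence every term in the claimed formula is a monomial $z^p\bar z^q$ times a constant vector, and the operator
\[
\mu\Delta+(\lambda+\mu)\nabla\nabla\cdot
\]
maps such a term to a combination of $z^{p-1}\bar z^{q-1}$, $z^{p-2}\bar z^{q}$, $z^{p}\bar z^{q-2}$ times explicit $2\times2$ matrices. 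The nilpotent matrices $\begin{pmatrix}1&\pm\bfi\\ \pm\bfi&-1\end{pmatrix}$ appear exactly as $(1,\pm\bfi)^{\intercal}(1,\pm\bfi)$, i.e.\ from $\nabla\nabla\cdot$ acting through $\p_z$ or $\p_{\bar z}$.

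\textbf{Homogeneous part.} First I would check that $\bfc z^{0}\bar z^{m+2}$ plus its correction term $\propto \bar z^{m+1}z$ is annihilated by the operator, and similarly for $\bfd$ by conjugate symmetry. This should reduce to an identity using
\[
\begin{pmatrix}1&-\bfi\\-\bfi&-1\end{pmatrix}^2=0
\]
and matching the coefficient $\frac{\lambda+\mu}{2(\lambda+3\mu)}(m+2)$.

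\textbf{Particular part.} Next I would apply the operator to the $\bfa$-terms $\frac{1}{2(\lambda+3\mu)(m+1)}\bfa z\bar z^{m+1}$ and $-\frac{\lambda+\mu}{8(\lambda+3\mu)^2}N\bfa z^2\bar z^{m}$, and check that the result equals the $\bfa$-part of \eqref{eq:H-higher-order}, namely $\bigl[\id-\frac12(\frac{\lambda+\mu}{\lambda+3\mu})^2M\bigr]\bfa\bar z^{m}$. Here the second term's image contains an extra $z^{2}\bar z^{m-2}$ piece times $N^2=0$, which cancels. This is where the requirement $m\ge2$ enters, so that $\bar z^{m-2}$ makes sense. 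The $\bfb$-terms then follow by complex conjugation symmetry (swap $z\leftrightarrow\bar z$, $\bfi\to-\bfi$).

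\textbf{Completeness.} Finally I must show these give \emph{all} solutions in the sector. The difference of two solutions solves the homogeneous Lamé system in the sector. Interpreting ``general solution'' as solutions homogeneous of degree $m+2$, we write $w=r^{m+2}\Phi(\theta)$; the system reduces to a linear constant-coefficient fourth-order ODE system in $\theta$ for $\Phi\in\mC^2$. Its solution space is $4$-dimensional over $\mC$ (the Lamé operator has order $2$ acting on $2$ components, so the determinant symbol has degree 4). The $\bfc,\bfd$ family is exactly $4$-dimensional and independent, since the leading Fourier modes $e^{\mp\bfi(m+2)\theta}$ are distinct. Alternatively, I would invoke the Goursat/Kolosov representation $2\mu w=\kappa\varphi-z\overline{\varphi'}-\overline{\psi}$ restricted to degree $m+2$ homogeneity.

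I expect the main obstacle to be the bookkeeping in the particular-part verification, in particular getting the constants $\frac{1}{8(\lambda+3\mu)^2}$ and $\frac12(\frac{\lambda+\mu}{\lambda+3\mu})^2$ to match exactly. I would organize it by first computing, once and for all, the action of the operator on $v\,z^p\bar z^q$ for an arbitrary constant vector $v$.
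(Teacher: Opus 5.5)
Your verification step is the paper's proof. It writes $\mL^{\calC_0}=2(\lambda+3\mu)\p_z\p_{\bar z}+(\lambda+\mu)N_+\p_z^2+(\lambda+\mu)N_-\p_{\bar z}^2$ with $N_\pm=(1,\pm\bfi)^{\intercal}(1,\pm\bfi)$. It then applies this to $\bar z^{m+2}\bfc$, $z\bar z^{m+1}N_-\bfc$, $z\bar z^{m+1}\bfa$, $z^2\bar z^{m}N_-\bfa$ (and the conjugate terms), and uses $N_\pm^2=0$. Your bookkeeping will turn up two facts. First, the cross terms $\pm\frac{(\lambda+\mu)m}{2(\lambda+3\mu)}z\bar z^{m-1}N_-\bfa$ coming from the two $\bfa$-terms cancel. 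Second, $N_+N_-=2\begin{pmatrix}1&-\bfi\\ \bfi&1\end{pmatrix}$, and this identity produces the $\frac12(\frac{\lambda+\mu}{\lambda+3\mu})^2$ term.

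What you add is the completeness step, which the paper does not prove. The paper only checks that the displayed family solves \eqref{eq:general-solution}. When it later needs a specific solution, it matches Cauchy data on a ray and invokes unique continuation instead. Your dimension count is sound, and it makes explicit that ``general solution'' can only mean ``homogeneous of degree $m+2$''.

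There are two minor inaccuracies. First, the angular system for $\Phi$ is constant-coefficient only in polar components; in Cartesian components, $\nabla\nabla\cdot$ brings in $\cos\theta$ and $\sin\theta$. What you actually need is that the coefficient of $\Phi''$, namely $\mu\id+(\lambda+\mu)e_\theta\otimes e_\theta$, is invertible. Its eigenvalues are $\mu$ and $\lambda+2\mu$, so it is, and this gives a $4$-dimensional solution space on every interval $I$. Second, $m\ge 2$ does not enter through the $z^2\bar z^{m-2}$ term. The term $\p_{\bar z}^2(z^2\bar z^m)=m(m-1)z^2\bar z^{m-2}$ is multiplied by $N_-^2=0$ for every $m$, and the computation goes through verbatim for $m\in\{0,1\}$. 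The paper's split at $m=2$ only reflects how $H$ is parametrized. For $m=1$ the cross term $zN_-\bfa$ lies in the same Fourier mode $e^{\bfi\theta}$ as the $\bfb$-part of $H$, and \eqref{eq:H} absorbs it there rather than cancelling it.
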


\begin{remark}\label{rem:representation-H}
Let $m=1$. Given any $\tilde{\bfa},\tilde{\bfb}\in\mC^{2}$, consider the system 
\begin{equation*}
\tilde{\bfa} = \bfa + \frac{1}{2}\frac{\lambda+\mu}{\lambda+3\mu} \left(\begin{array}{cc}1&\bfi\\\bfi&-1\end{array}\right)\bfb ,\quad \tilde{\bfb} = \bfb + \frac{1}{2}\frac{\lambda+\mu}{\lambda+3\mu} \left(\begin{array}{cc}1&-\bfi\\-\bfi&-1\end{array}\right)\bfa. 
\end{equation*}
Observe that 
\begin{equation*}
\begin{aligned}
\tilde{\bfa}+\tilde{\bfb} &= \left[ \id + \frac{1}{2}\frac{\lambda+\mu}{\lambda+3\mu} \left(\begin{array}{cc}1&-\bfi\\-\bfi&-1\end{array}\right) \right]\bfa + \left[ \id + \frac{1}{2}\frac{\lambda+\mu}{\lambda+3\mu} \left(\begin{array}{cc}1&\bfi\\\bfi&-1\end{array}\right) \right]\bfb, \\ 
\tilde{\bfa}-\tilde{\bfb} &= \left[ \id - \frac{1}{2}\frac{\lambda+\mu}{\lambda+3\mu} \left(\begin{array}{cc}1&-\bfi\\-\bfi&-1\end{array}\right) \right]\bfa - \left[ \id - \frac{1}{2}\frac{\lambda+\mu}{\lambda+3\mu} \left(\begin{array}{cc}1&\bfi\\\bfi&-1\end{array}\right) \right]\bfb. 
\end{aligned}
\end{equation*}
Hence, by \cite[Proposition~2.8.3]{Bernstein2009MatrixFacts} and the fact that $\frac{\lambda+\mu}{\lambda+3\mu}\neq\pm1$, the above system is uniquely solvable. Therefore, every harmonic polynomial of degree $m=1$ admits the representation \eqref{eq:H}. 
\end{remark}

\begin{remark}\label{rem:representation-H-higher-order}
Since 
\begin{equation*}
\begin{aligned}
& \left[ \id - \frac{1}{2}\left(\frac{\lambda+\mu}{\lambda+3\mu}\right)^{2}\left(\begin{array}{cc}1&-\bfi\\\bfi&1\end{array}\right) \right]\left[ \id - \frac{1}{2}\left(\frac{\lambda+\mu}{\lambda+3\mu}\right)^{2}\left(\begin{array}{cc}1&\bfi\\-\bfi&1\end{array}\right) \right] \\ 
&\quad = \overbrace{\left( 1 - \left(\frac{\lambda+\mu}{\lambda+3\mu}\right)^{2} \right)}^{\neq 0}\id
\end{aligned}
\end{equation*}
both 
\begin{equation*}
\left[ \id - \frac{1}{2}\left(\frac{\lambda+\mu}{\lambda+3\mu}\right)^{2}\left(\begin{array}{cc}1&-\bfi\\\bfi&1\end{array}\right) \right] \quad\text{and}\quad \left[ \id - \frac{1}{2}\left(\frac{\lambda+\mu}{\lambda+3\mu}\right)^{2}\left(\begin{array}{cc}1&\bfi\\-\bfi&1\end{array}\right) \right]
\end{equation*}
are invertible. Hence, every harmonic polynomial of degree $m\ge 2$ can be represented in the form \eqref{eq:H-higher-order}. 
\end{remark}

\begin{remark*}
In the degenerate case $\lambda+\mu=0$, equation \eqref{eq:general-solution} reduces to $\Delta w = \frac{1}{\mu}H$, so that the components of $w$ decouple and no longer interact. In this case, the general solution described in \Cref{lem:general-solution,lem:general-solution-higher-order} reduce to 
\begin{equation}
\begin{aligned}
w(re^{\bfi\theta}) &= \bfc r^{m+2}e^{-\bfi(m+2)\theta} + \bfd r^{m+2}e^{\bfi(m+2)\theta} \\ 
&\quad + \frac{\bfa}{4\mu(m+1)} r^{m+2}e^{-\bfi m\theta} + \frac{\bfb}{4\mu(m+1)} r^{m+2}e^{\bfi m\theta}. 
\end{aligned} \label{eq:degenerate-general-solution}
\end{equation} 
\end{remark*}

\begin{proof}[Proof of \Cref{lem:general-solution,lem:general-solution-higher-order}] 
We write $z=re^{\bfi\theta}$ and introduce the Wirtinger derivatives 
\begin{equation*}
\partial_{z}=\frac{1}{2}\left(\partial_{x}-\bfi\partial_{y}\right) ,\quad \partial_{\overline{z}}=\frac{1}{2}\left(\partial_{x}+\bfi\partial_{y}\right). 
\end{equation*}
Let $\mL = \mL^{\calC_{0}}$ and $\partial_{z}\partial_{\overline{z}}w = \begin{pmatrix} \partial_{z}\partial_{\overline{z}}w_1 \\ \partial_{z}\partial_{\overline{z}}w_2 \end{pmatrix}$ etc. A direct computation shows that 
\begin{equation*}
\begin{aligned}
\mL w &= \mu \Delta w + (\lambda+\mu)\nabla(\nabla\cdot w) \\ 
 \quad &= 2(\lambda+3\mu) \partial_{z}\partial_{\overline{z}}w + (\lambda+\mu)\left(\begin{array}{cc}1&\bfi\\\bfi&-1\end{array}\right)\partial_{z}^{2}w + (\lambda+\mu)\left(\begin{array}{cc}1&-\bfi\\-\bfi&-1\end{array}\right)\partial_{\overline{z}}^{2}w. 
\end{aligned}
\end{equation*}
Since 
\begin{equation*}
\begin{aligned}
&\mL(\overline{z}^{m+2}\bfc) = (\lambda+\mu)(m+2)(m+1)\overline{z}^{m}\left(\begin{array}{cc}1&-\bfi\\-\bfi&-1\end{array}\right)\bfc, \\ 
&\mL\left(\overline{z}^{m+1}z\left(\begin{array}{cc}1&-\bfi\\-\bfi&-1\end{array}\right)\bfc\right) = 2(\lambda+3\mu)(m+1)\overline{z}^{m}\left(\begin{array}{cc}1&-\bfi\\-\bfi&-1\end{array}\right)\bfc
\end{aligned}
\end{equation*}
it follows that 
\begin{equation*}
\mL\left( \overline{z}^{m+2}\bfc - \frac{\lambda+\mu}{2(\lambda+3\mu)}(m+2)\overline{z}^{m+1}z\left(\begin{array}{cc}1&-\bfi\\-\bfi&-1\end{array}\right)\bfc \right) = 0. 
\end{equation*}
Analogous arguments yield
\begin{equation*}
\mL\left( z^{m+2}\bfd - \frac{\lambda+\mu}{2(\lambda+3\mu)}(m+2)z^{m+1}\overline{z}\left(\begin{array}{cc}1&\bfi\\\bfi&-1\end{array}\right)\bfd \right) = 0. 
\end{equation*}
Meanwhile, we compute that 
\begin{equation}
\begin{aligned}
& \mL(\overline{z}^{m+1}z\bfa) = 2(\lambda+3\mu)(m+1)\overline{z}^{m}\bfa + (\lambda+\mu)(m+1)m\overline{z}^{m-1}z\left(\begin{array}{cc}1&-\bfi\\-\bfi&-1\end{array}\right)\bfa, \\ 
& \mL(\overline{z}z^{m+1}\bfb) = 2(\lambda+3\mu)(m+1)z^{m}\bfb + (\lambda+\mu)(m+1)m\overline{z}z^{m-1}\left(\begin{array}{cc}1&\bfi\\\bfi&-1\end{array}\right)\bfb,
\end{aligned} \label{eq:nonhomogeneous1} 
\end{equation}
then 
\begin{equation*}
\begin{aligned}
& \frac{1}{2(\lambda+3\mu)(m+1)} \mL(\overline{z}^{m+1}z\bfa + \overline{z}z^{m+1}\bfb) \\ 
&\quad = \overline{z}^{m}\left[ \bfa + \frac{(\lambda+\mu)m}{2(\lambda+3\mu)}\overline{z}^{-m+1}z^{m-1}\left(\begin{array}{cc}1&\bfi\\\bfi&-1\end{array}\right)\bfb \right] \\ 
&\qquad + z^{m}\left[ \bfb + \frac{(\lambda+\mu)m}{2(\lambda+3\mu)}\overline{z}^{m-1}z^{-m+1}\left(\begin{array}{cc}1&-\bfi\\-\bfi&-1\end{array}\right)\bfa \right]. 
\end{aligned}
\end{equation*}
By considering $m\in\{0,1\}$, we conclude the proof of \Cref{lem:general-solution}. 
We next compute that 
\begin{equation*}
\begin{aligned}
\mL\left( \overline{z}^{m}z^{2}\left(\begin{array}{cc}1&-\bfi\\-\bfi&-1\end{array}\right)\bfa \right) &= 4(\lambda+3\mu)m\overline{z}^{m-1}z\left(\begin{array}{cc}1&-\bfi\\-\bfi&-1\end{array}\right)\bfa \\
&\qquad + 4(\lambda+\mu)\overline{z}^{m}\left(\begin{array}{cc}1&-\bfi\\\bfi&1\end{array}\right)\bfa, 
\end{aligned}
\end{equation*}
and 
\begin{equation*}
\begin{aligned} 
\mL\left( \overline{z}^{2}z^{m}\left(\begin{array}{cc}1&\bfi\\\bfi&-1\end{array}\right)\bfb \right) &= 4(\lambda+3\mu)m\overline{z}z^{m-1}\left(\begin{array}{cc}1&\bfi\\\bfi&-1\end{array}\right)\bfb \\
&\qquad+ 4(\lambda+\mu)z^{m}\left(\begin{array}{cc}1&\bfi\\-\bfi&1\end{array}\right)\bfb. 
\end{aligned}
\end{equation*}
Combining this with \eqref{eq:nonhomogeneous1}, we conclude the proof of \Cref{lem:general-solution-higher-order}. 
\end{proof}

We now give a characterization of $v$ in the following lemmas. 

\begin{lemma}\label{lem:explicit-solution-bvp}
Let $m\in\{0,1\}$. Assume that $H$ admits the $\theta$-Fourier representation \eqref{eq:H} 
for some constant vectors $\bfa,\bfb\in\mC^{2}$. Let $0 \le \theta_{1} < \theta_{2} < 2\pi$, $\theta_{0}\in[\theta_{1},\theta_{2}]$ and write 
\begin{equation*}
\mathfrak{C} := \left\{ re^{\bfi\theta} : r>0 , \, \theta \in (\theta_{1},\theta_{2})\setminus \{ \theta_{0} \} \right\}. 
\end{equation*}
Write $z_{0}=e^{\bfi\theta_{0}}$. Suppose that $v$ solves 
\begin{equation*}
\mu\Delta v + (\lambda+\mu)\nabla(\nabla\cdot v) = H \text{ in $\mathfrak{C}$} ,\quad v|_{\theta=\theta_{0}}=\partial_{\theta}v|_{\theta=\theta_{0}}=0. 
\end{equation*}
Then the solution $v$ can be characterized as in \Cref{lem:general-solution} with 
\begin{equation}
\begin{aligned}
& \overbrace{\scalebox{0.8}{$\left(\begin{array}{cc} 
z_{0}^{-(m+2)}\id - \frac{\lambda+\mu}{2(\lambda+3\mu)}(m+2)z_{0}^{-m}\left(\begin{array}{cc}1&-\bfi\\-\bfi&-1\end{array}\right) & z_{0}^{m+2}\id - \frac{\lambda+\mu}{2(\lambda+3\mu)}(m+2)z_{0}^{m}\left(\begin{array}{cc}1&\bfi\\\bfi&-1\end{array}\right) \\ 
z_{0}^{-(m+2)}\id - \frac{\lambda+\mu}{2(\lambda+3\mu)}mz_{0}^{-m}\left(\begin{array}{cc}1&-\bfi\\-\bfi&-1\end{array}\right) & -\left[z_{0}^{m+2}\id - \frac{\lambda+\mu}{2(\lambda+3\mu)}mz_{0}^{m}\left(\begin{array}{cc}1&\bfi\\\bfi&-1\end{array}\right)\right] 
\end{array}\right)$}}^{\text{with determinant $4-4\left(\frac{\lambda+\mu}{\lambda+3\mu}\right)^{2}\neq 0$}}
\left(\begin{array}{c}\bfc\\\bfd\end{array}\right) \\ 
&\quad =-  \left(\begin{array}{cc} 
\frac{1}{2(\lambda+3\mu)(m+1)}z_{0}^{-m}\id & \frac{1}{2(\lambda+3\mu)(m+1)}z_{0}^{m}\id \\ 
\frac{m}{2(\lambda+3\mu)(m+1)(m+2)}z_{0}^{-m}\id & -\frac{m}{2(\lambda+3\mu)(m+1)(m+2)}z_{0}^{m}\id 
\end{array}\right) \left(\begin{array}{c}\bfa\\\bfb\end{array}\right)
\end{aligned} \label{eq:matrix-equation1} 
\end{equation} 
\end{lemma}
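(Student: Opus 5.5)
The plan is to work on each of the two open sub-sectors of $\mathfrak{C}$ adjacent to the ray $\{\theta=\theta_{0}\}$ separately. Fix one such sub-sector, say $\{\theta\in(\theta_{0},\theta_{2})\}$ (or $(\theta_{1},\theta_{0})$). There $v$ solves \eqref{eq:general-solution} with $H$ of the form \eqref{eq:H}. By \Cref{lem:general-solution} it is therefore given by the explicit formula of that lemma for some constant vectors $\bfc,\bfd\in\mC^{2}$; these constants may a priori differ from one side of $\theta_0$ to the other. It then remains to show that the two Cauchy conditions at $\theta=\theta_{0}$ are exactly the linear system \eqref{eq:matrix-equation1}, and that this system has the stated determinant. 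The determinant being nonzero means $(\bfc,\bfd)$ is uniquely determined by $(\bfa,\bfb)$.

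\emph{Step 1: the condition $v|_{\theta=\theta_0}=0$.} Every term in the formula of \Cref{lem:general-solution} carries the same radial factor $r^{m+2}$. So $v(re^{\bfi\theta_{0}})=0$ for all $r>0$ is equivalent to the vanishing of the angular coefficient at $e^{\bfi\theta_{0}}=z_{0}$. Moving the $\bfa,\bfb$ terms to the right-hand side gives precisely the first block row of \eqref{eq:matrix-equation1}.

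\emph{Step 2: the condition $\partial_{\theta}v|_{\theta=\theta_0}=0$.} Differentiating each term $e^{\pm\bfi k\theta}$ produces a factor $\pm\bfi k$. After dividing the whole identity by $-\bfi(m+2)$:
\begin{itemize}
\item the $\bfc$-terms become $z_{0}^{-(m+2)}\bfc-\frac{\lambda+\mu}{2(\lambda+3\mu)}m z_{0}^{-m}\left(\begin{array}{cc}1&-\bfi\\-\bfi&-1\end{array}\right)\bfc$;
\item the $\bfd$-terms acquire an overall minus sign;
\item the $\bfa$- and $\bfb$-terms acquire the factors $\pm\frac{m}{(m+2)}$.
\end{itemize}
This is exactly the second block row of \eqref{eq:matrix-equation1}.

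\emph{Step 3: the determinant, which I expect to be the main obstacle.} I would simplify the block matrix before expanding. Write $\kappa=\frac{\lambda+\mu}{2(\lambda+3\mu)}$, $M=\left(\begin{array}{cc}1&-\bfi\\-\bfi&-1\end{array}\right)$ and $\overline{M}=\left(\begin{array}{cc}1&\bfi\\\bfi&-1\end{array}\right)$.
\begin{enumerate}
\item Multiply the first block column by $z_{0}^{m+2}$ and the second by $z_{0}^{-(m+2)}$. This leaves the determinant unchanged, since the scalings contribute $z_{0}^{2(m+2)}z_{0}^{-2(m+2)}=1$. The diagonal entries become $\id$, and the matrices $M$, $\overline{M}$ now carry the factors $z_{0}^{2}$ and $z_{0}^{-2}$ respectively.
\item Subtract the second block row from the first. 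This produces the block row $\bigl(-2\kappa z_{0}^{2}M,\;2\id-2\kappa(m+1)z_{0}^{-2}\overline{M}\bigr)$; correcting by its exact multiples is routine.
\item Compute the resulting determinant using $M^{2}=\overline{M}^{2}=0$ (both matrices are nilpotent) and $M\overline{M}=\left(\begin{array}{cc}2&2\bfi\\-2\bfi&2\end{array}\right)$. The nilpotency gives $\det(\alpha\id+\beta M)=\alpha^{2}$, and the product $M\overline{M}$ is the only place where the $z_0$ factors could couple.
\end{enumerate}
Carrying this out via a Schur complement should show that all $z_{0}$- and $m$-dependence cancels, leaving $4-16\kappa^{2}=4-4\left(\frac{\lambda+\mu}{\lambda+3\mu}\right)^{2}$.

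Finally, $\mu>0$ and $\lambda+2\mu>0$ imply $\lambda+3\mu>\abs{\lambda+\mu}$, so $\frac{\lambda+\mu}{\lambda+3\mu}\neq\pm1$ and the determinant is nonzero. Hence $(\bfc,\bfd)$ is uniquely determined by $(\bfa,\bfb)$, and the same values are obtained on both sides of $\theta_{0}$.
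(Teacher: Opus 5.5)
Your proposal is correct and follows the paper's proof. You substitute the formula of \Cref{lem:general-solution} into the two Cauchy conditions at $\theta=\theta_{0}$ to get the two block rows of \eqref{eq:matrix-equation1}, and you evaluate the determinant by a Schur complement that uses the nilpotency of $M$ and $\overline{M}$. The paper pulls out the determinants of the two lower blocks and reduces at once to $\det\bigl[2\id-\frac{\lambda+\mu}{\lambda+3\mu}\bigl(z_{0}^{2}M+z_{0}^{-2}\overline{M}\bigr)\bigr]$, so the $m$-dependence disappears immediately. Your column scaling and row subtraction instead leaves $m$-dependent terms that cancel only at the end; they still give $4-4\left(\frac{\lambda+\mu}{\lambda+3\mu}\right)^{2}$. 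Your remark that the constants $\bfc,\bfd$ must agree on both sides of the ray $\theta=\theta_{0}$, because the system is uniquely solvable, is a point the paper leaves implicit.
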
 

\begin{lemma}\label{lem:explicit-solution-bvp-higher-order} 
Let $m\ge 2$ be an integer. Assume that $H$ admits the $\theta$-Fourier representation \eqref{eq:H-higher-order} for some constant vectors $\bfa,\bfb\in\mC^{2}$. Let $0 \le \theta_{1} < \theta_{2} < 2\pi$, $\theta_{0}\in[\theta_{1},\theta_{2}]$ and write 
\begin{equation*}
\mathfrak{C} := \left\{ re^{\bfi\theta} : r>0 , \theta \in (\theta_{1},\theta_{2})\setminus\{\theta_{0}\} \right\}. 
\end{equation*}
Write $z_{0}=e^{\bfi\theta_{0}}$. Suppose that $v$ solves 
\begin{equation*}
\mu\Delta v + (\lambda+\mu)\nabla(\nabla\cdot v) = H \text{ in $\mathfrak{C}$} ,\quad v|_{\theta=\theta_{0}}=\partial_{\theta}v|_{\theta=\theta_{0}}=0. 
\end{equation*}
Then the solution $v$ can be characterized as in \Cref{lem:general-solution-higher-order} with 
\begin{equation}
\begin{aligned}
& \overbrace{\scalebox{0.8}{$\left(\begin{array}{cc} 
z_{0}^{-(m+2)}\id - \frac{\lambda+\mu}{2(\lambda+3\mu)}(m+2)z_{0}^{-m}\left(\begin{array}{cc}1&-\bfi\\-\bfi&-1\end{array}\right) & z_{0}^{m+2}\id - \frac{\lambda+\mu}{2(\lambda+3\mu)}(m+2)z_{0}^{m}\left(\begin{array}{cc}1&\bfi\\\bfi&-1\end{array}\right) \\ 
z_{0}^{-(m+2)}\id - \frac{\lambda+\mu}{2(\lambda+3\mu)}mz_{0}^{-m}\left(\begin{array}{cc}1&-\bfi\\-\bfi&-1\end{array}\right) & -\left[z_{0}^{m+2}\id - \frac{\lambda+\mu}{2(\lambda+3\mu)}mz_{0}^{m}\left(\begin{array}{cc}1&\bfi\\\bfi&-1\end{array}\right)\right] 
\end{array}\right)$}}^{\text{with determinant $4-4\left(\frac{\lambda+\mu}{\lambda+3\mu}\right)^{2}\neq 0$}}
\left(\begin{array}{c}\bfc\\\bfd\end{array}\right) \\ 
& = \scalebox{0.65}{$\left(\begin{array}{cc} 
z_{0}^{-m}\id - \frac{\lambda+\mu}{4(\lambda+3\mu)}(m+1)z_{0}^{-(m-2)}\left(\begin{array}{cc}1&-\bfi\\-\bfi&-1\end{array}\right) & z_{0}^{m}\id - \frac{\lambda+\mu}{4(\lambda+3\mu)}(m+1)z_{0}^{m-2}\left(\begin{array}{cc}1&\bfi\\\bfi&-1\end{array}\right) \\ 
\frac{m}{m+2}z_{0}^{-m}\id - \frac{\lambda+\mu}{4(\lambda+3\mu)}\frac{(m+1)(m-2)}{m+2}z_{0}^{-(m-2)}\left(\begin{array}{cc}1&-\bfi\\-\bfi&-1\end{array}\right) & -\left[\frac{m}{m+2}z_{0}^{m}\id - \frac{\lambda+\mu}{4(\lambda+3\mu)}\frac{(m+1)(m-2)}{m+2}z_{0}^{m-2}\left(\begin{array}{cc}1&\bfi\\\bfi&-1\end{array}\right)\right] 
\end{array}\right)$} \left(\begin{array}{c}\hat{\bfa}\\ \hat{\bfb}\end{array}\right). 
\end{aligned} \label{eq:matrix-equation1-higher-order} 
\end{equation} 
where $\hat{\bfa}=-\frac{1}{2(\lambda+3\mu)(m+1)}\bfa$ and $\hat{\bfb}=-\frac{1}{2(\lambda+3\mu)(m+1)}\bfb$
\end{lemma}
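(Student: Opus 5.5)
The plan is to substitute the general solution from \Cref{lem:general-solution-higher-order} into the two boundary conditions on the ray $\theta=\theta_{0}$ and read off the resulting linear system for $(\bfc,\bfd)$. The argument runs parallel to \Cref{lem:explicit-solution-bvp}; the only change is that the particular solution now has the extra modes $r^{m+2}e^{\mp\bfi(m-2)\theta}$.

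First, each open sector $\{\theta\in(\theta_{1},\theta_{0})\}$ and $\{\theta\in(\theta_{0},\theta_{2})\}$ is of the form required in \Cref{lem:general-solution-higher-order}. Hence on each side $v$ has the stated form, a priori with its own constants $\bfc,\bfd$. Every term of the general solution carries the common factor $r^{m+2}$. Therefore $v|_{\theta=\theta_{0}}=0$ for all $r>0$ becomes a single vector identity in the $\theta$-profile evaluated at $\theta_{0}$. Likewise, $\partial_{\theta}v|_{\theta=\theta_{0}}=0$ becomes the identity obtained by differentiating the profile in $\theta$. A mode $e^{\bfi k\theta}$ contributes $z_{0}^{k}$ to the first identity and $\bfi k z_{0}^{k}$ to the second.

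Next I would write these two identities out. The $\bfc$-block of the homogeneous solution contributes $z_{0}^{-(m+2)}\id-\frac{\lambda+\mu}{2(\lambda+3\mu)}(m+2)z_{0}^{-m}M_{-}$ to the trace condition, where $M_{\mp}$ denote the two matrices $\left(\begin{smallmatrix}1&\mp\bfi\\\mp\bfi&-1\end{smallmatrix}\right)$. The $\bfd$-block contributes the analogous term with $M_{+}$. For the $\theta$-derivative condition I would divide by $-\bfi(m+2)$. The $\bfc$-column then becomes $z_{0}^{-(m+2)}\id-\frac{\lambda+\mu}{2(\lambda+3\mu)}m z_{0}^{-m}M_{-}$, and the $\bfd$-column acquires an overall minus sign. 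This reproduces the left matrix in \eqref{eq:matrix-equation1-higher-order}.

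On the right side I would move the $\bfa,\bfb$ terms across and write them using $\hat\bfa,\hat\bfb$. The coefficient $\frac{\lambda+\mu}{8(\lambda+3\mu)^{2}}$ rewritten in terms of $\hat\bfa$ equals $\frac{\lambda+\mu}{4(\lambda+3\mu)}(m+1)$, which gives the stated first row. In the derivative row, the modes $-m$ and $-(m-2)$ divided by $-(m+2)$ yield the factors $\frac{m}{m+2}$ and $\frac{m-2}{m+2}$, which gives the second row.

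The main obstacle is the determinant claim, $\det=4-4\bigl(\frac{\lambda+\mu}{\lambda+3\mu}\bigr)^{2}\neq0$. Without it the system does not determine $(\bfc,\bfd)$, and the two sides of the ray need not share the same constants. To compute it, I would first factor out the unimodular diagonal powers of $z_{0}$. Then I would use that $M_{\pm}$ are nilpotent ($M_{\pm}^{2}=0$) with $M_{+}M_{-}+M_{-}M_{+}=4\id$, and apply a Schur-complement computation (cf.\ \cite[Proposition~2.8.3]{Bernstein2009MatrixFacts}). As a cross-check, I would verify that the block determinant is independent of $\theta_{0}$ by rotational invariance (\Cref{rem:rotational-symmetry}), and then evaluate it at $\theta_{0}=0$. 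Nonvanishing follows because $\mu>0$ and $\lambda+2\mu>0$ give $\bigl|\frac{\lambda+\mu}{\lambda+3\mu}\bigr|<1$.

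Once the determinant is nonzero, $(\bfc,\bfd)$ is uniquely determined by $(\bfa,\bfb)$, and the constants coincide on both sides of $\theta_{0}$. This gives the characterization in \Cref{lem:general-solution-higher-order} throughout $\mathfrak{C}$.
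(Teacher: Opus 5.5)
Your proposal is correct and follows the paper's proof. The paper likewise substitutes the general solution of \Cref{lem:general-solution-higher-order} into the two Cauchy conditions at $\theta=\theta_{0}$, normalizes the derivative row by $-\bfi(m+2)$, and evaluates the block determinant by a Schur complement; its inversion $(\id-\tfrac{\lambda+\mu}{2(\lambda+3\mu)}m z_{0}^{\mp 2}M_{\pm})^{-1}=\id+\tfrac{\lambda+\mu}{2(\lambda+3\mu)}m z_{0}^{\mp 2}M_{\pm}$ is exactly your nilpotency relation $M_{\pm}^{2}=0$. Your two extra remarks are correct details that the paper leaves implicit: $\mu>0$ and $\lambda+2\mu>0$ give $\abs{\frac{\lambda+\mu}{\lambda+3\mu}}<1$, and the nonsingular system forces the same constants $(\bfc,\bfd)$ on both sides of the ray.
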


\begin{remark*}
By the unique continuation principle (UCP) for the isotropic elasticity system \cite{DLW20UCPLame,LNUW11UCPLame,LW05UCPLame}, the solution $v$ to \eqref{eq:Lame-2D} must take the form given in \Cref{lem:explicit-solution-bvp,lem:explicit-solution-bvp-higher-order} with $\theta_{0}=0$. 
\end{remark*}

\begin{remark*}
In the degenerate case $\lambda+\mu=0$, the equation \eqref{eq:matrix-equation1} with $\theta_{0}=0$ reduces to 
\begin{equation*}
\bfc = -\frac{1}{4\mu(m+1)}\left(\frac{m+1}{m+2}\bfa+\frac{1}{m+2}\bfb\right) ,\quad \bfd = -\frac{1}{4\mu(m+1)}\left(\frac{1}{m+2}\bfa+\frac{m+1}{m+2}\bfb\right).  
\end{equation*}
Substituting these coefficients into \eqref{eq:degenerate-general-solution} yields 
\begin{equation*}
\begin{aligned}
v(re^{\bfi\theta}) &= \frac{1}{4\mu(m+1)}\left[ r^{m+2}e^{-\bfi m\theta} - \frac{1}{m+2}r^{m+2}e^{\bfi(m+2)\theta} -\frac{m+1}{m+2}r^{m+2}e^{-\bfi(m+2)\theta} \right]\bfa \\ 
&\quad + \frac{1}{4\mu(m+1)}\left[ r^{m+2}e^{\bfi m\theta} - \frac{m+1}{m+2}r^{m+2}e^{\bfi(m+2)\theta} -\frac{1}{m+2}r^{m+2}e^{-\bfi(m+2)\theta} \right]\bfb 
\end{aligned} 
\end{equation*} 
which coincides with the expression obtained in \cite[Lemma~3.5]{SS25vanishingcontrast}. 
\end{remark*}

\begin{proof}[Proof of \Cref{lem:explicit-solution-bvp,lem:explicit-solution-bvp-higher-order}] 
Inserting the general solution in \Cref{lem:general-solution,lem:general-solution-higher-order} into the boundary conditions $v(re^{\bfi\theta})|_{\theta=\theta_{0}}=\partial_{\theta}v(re^{\bfi\theta})|_{\theta=\theta_{0}} = 0$, we obtain 
\begin{subequations}\label{eq:matrix-equation1-equation-form} 
\begin{equation}
\begin{aligned}
0 &= \left[ z_{0}^{-(m+2)}\id - \frac{\lambda+\mu}{2(\lambda+3\mu)}(m+2)z_{0}^{-m}\left(\begin{array}{cc}1&-\bfi\\-\bfi&-1\end{array}\right)\right]\bfc \\ 
&\quad + \left[ z_{0}^{m+2}\id - \frac{\lambda+\mu}{2(\lambda+3\mu)}(m+2)z_{0}^{m}\left(\begin{array}{cc}1&\bfi\\\bfi&-1\end{array}\right)\right]\bfd \\ 
&\quad + \frac{1}{2(\lambda+3\mu)(m+1)}(z_{0}^{-m}\bfa+z_{0}^{m}\bfb)
\end{aligned}
\end{equation}
and 
\begin{equation}
\begin{aligned}
0 &= \left[ -(m+2)z_{0}^{-(m+2)}\id + m\frac{\lambda+\mu}{2(\lambda+3\mu)}(m+2)z_{0}^{-m}\left(\begin{array}{cc}1&-\bfi\\-\bfi&-1\end{array}\right)\right]\bfc \\ 
&\quad + \left[ (m+2)z_{0}^{m+2}\id - m\frac{\lambda+\mu}{2(\lambda+3\mu)}(m+2)z_{0}^{m}\left(\begin{array}{cc}1&\bfi\\\bfi&-1\end{array}\right)\right]\bfd \\ 
&\quad + \frac{1}{2(\lambda+3\mu)(m+1)}(-mz_{0}^{-m}\bfa+mz_{0}^{m}\bfb)
\end{aligned}
\end{equation}
\end{subequations}
for $m\in\{0,1\}$. For $m\ge 2$, we obtain 
\begin{subequations}\label{eq:matrix-equation1-higher-order-equation-form} 
\begin{equation}
\begin{aligned}
0 &= \left[ z_{0}^{-(m+2)}\id - \frac{\lambda+\mu}{2(\lambda+3\mu)}(m+2)z_{0}^{-m}\left(\begin{array}{cc}1&-\bfi\\-\bfi&-1\end{array}\right)\right]\bfc \\ 
&\quad + \left[ z_{0}^{m+2}\id - \frac{\lambda+\mu}{2(\lambda+3\mu)}(m+2)z_{0}^{m}\left(\begin{array}{cc}1&\bfi\\\bfi&-1\end{array}\right)\right]\bfd \\ 
&\quad + \frac{1}{2(\lambda+3\mu)(m+1)}\left[ z_{0}^{-m}\id - \frac{\lambda+\mu}{4(\lambda+3\mu)}(m+1)z_{0}^{-(m-2)}\left(\begin{array}{cc}1&-\bfi\\-\bfi&-1\end{array}\right) \right]\bfa \\ 
&\quad + \frac{1}{2(\lambda+3\mu)(m+1)}\left[ z_{0}^{m}\id - \frac{\lambda+\mu}{4(\lambda+3\mu)}(m+1)z_{0}^{m-2}\left(\begin{array}{cc}1&\bfi\\\bfi&-1\end{array}\right) \right]\bfb  
\end{aligned}
\end{equation}
and 
\begin{equation}
\scalebox{0.9}{$\begin{aligned}
0 &= \left[ -(m+2)z_{0}^{-(m+2)}\id + m\frac{\lambda+\mu}{2(\lambda+3\mu)}(m+2)z_{0}^{-m}\left(\begin{array}{cc}1&-\bfi\\-\bfi&-1\end{array}\right)\right]\bfc \\ 
&\quad + \left[ (m+2)z_{0}^{m+2}\id - m\frac{\lambda+\mu}{2(\lambda+3\mu)}(m+2)z_{0}^{m}\left(\begin{array}{cc}1&\bfi\\\bfi&-1\end{array}\right)\right]\bfd \\ 
&\quad + \frac{1}{2(\lambda+3\mu)(m+1)}\left[ -mz_{0}^{-m}\id + (m-2)\frac{\lambda+\mu}{4(\lambda+3\mu)}(m+1)z_{0}^{-(m-2)}\left(\begin{array}{cc}1&-\bfi\\-\bfi&-1\end{array}\right) \right]\bfa \\ 
&\quad + \frac{1}{2(\lambda+3\mu)(m+1)}\left[ mz_{0}^{m}\id - (m-2)\frac{\lambda+\mu}{4(\lambda+3\mu)}(m+1)z_{0}^{m-2}\left(\begin{array}{cc}1&\bfi\\\bfi&-1\end{array}\right) \right]\bfb.  
\end{aligned}$} 
\end{equation}
\end{subequations}
Note that the block matrix equations \eqref{eq:matrix-equation1} and \eqref{eq:matrix-equation1-higher-order} are equivalent to \eqref{eq:matrix-equation1-equation-form} and \eqref{eq:matrix-equation1-higher-order-equation-form}, respectively. 

Finally, by \cite[Proposition~2.8.4]{Bernstein2009MatrixFacts}, the determinants of the matrices on the left-hand sides of \eqref{eq:matrix-equation1-equation-form} and \eqref{eq:matrix-equation1-higher-order-equation-form} can be computed as follows: 
\allowdisplaybreaks 
\begin{align*}
& \det \scalebox{0.9}{$\left(\begin{array}{cc} 
z_{0}^{-(m+2)}\id - \frac{\lambda+\mu}{2(\lambda+3\mu)}(m+2)z_{0}^{-m}\left(\begin{array}{cc}1&-\bfi\\-\bfi&-1\end{array}\right) & z_{0}^{m+2}\id - \frac{\lambda+\mu}{2(\lambda+3\mu)}(m+2)z_{0}^{m}\left(\begin{array}{cc}1&\bfi\\\bfi&-1\end{array}\right) \\ 
z_{0}^{-(m+2)}\id - \frac{\lambda+\mu}{2(\lambda+3\mu)}mz_{0}^{-m}\left(\begin{array}{cc}1&-\bfi\\-\bfi&-1\end{array}\right) & -\left[z_{0}^{m+2}\id - \frac{\lambda+\mu}{2(\lambda+3\mu)}mz_{0}^{m}\left(\begin{array}{cc}1&\bfi\\\bfi&-1\end{array}\right)\right] 
\end{array}\right)$}  \\ 
&\quad = \det \left[ z_{0}^{m+2}\id - m\frac{\lambda+\mu}{2(\lambda+3\mu)}z_{0}^{m}\left(\begin{array}{cc}1&\bfi\\\bfi&-1\end{array}\right) \right] \times \\ 
&\qquad \times \det \scalebox{0.75}{$\left[ \begin{aligned}
& z_{0}^{-(m+2)}\id - \frac{\lambda+\mu}{2(\lambda+3\mu)}(m+2)z_{0}^{-m}\left(\begin{array}{cc}1&-\bfi\\-\bfi&-1\end{array}\right) \\ 
& +\left[ z_{0}^{m+2}\id - \frac{\lambda+\mu}{2(\lambda+3\mu)}(m+2)z_{0}^{m}\left(\begin{array}{cc}1&\bfi\\\bfi&-1\end{array}\right) \right]\left[ z_{0}^{m+2}\id - \frac{\lambda+\mu}{2(\lambda+3\mu)}mz_{0}^{m}\left(\begin{array}{cc}1&\bfi\\\bfi&-1\end{array}\right) \right]^{-1}\times\\
&\qquad \times \left[ z_{0}^{-(m+2)}\id - \frac{\lambda+\mu}{2(\lambda+3\mu)}mz_{0}^{-m}\left(\begin{array}{cc}1&-\bfi\\-\bfi&-1\end{array}\right) \right] \end{aligned} \right]$} \\ 
&\quad = \overbrace{\det \scalebox{0.8}{$\left[ z_{0}^{m+2}\id - m\frac{\lambda+\mu}{2(\lambda+3\mu)}z_{0}^{m}\left(\begin{array}{cc}1&\bfi\\\bfi&-1\end{array}\right) \right]$}}^{=z_{0}^{2(m+2)}} \overbrace{\det \scalebox{0.8}{$\left[ z_{0}^{-(m+2)}\id - \frac{\lambda+\mu}{2(\lambda+3\mu)}mz_{0}^{-m}\left(\begin{array}{cc}1&-\bfi\\-\bfi&-1\end{array}\right) \right]$}}^{=z_{0}^{-2(m+2)}} \times \\ 
&\qquad \times \det \scalebox{0.75}{$\left[ \begin{aligned}
& \left[z_{0}^{-(m+2)}\id - \frac{\lambda+\mu}{2(\lambda+3\mu)}(m+2)z_{0}^{-m}\left(\begin{array}{cc}1&-\bfi\\-\bfi&-1\end{array}\right)\right]\left[ z_{0}^{-(m+2)}\id - \frac{\lambda+\mu}{2(\lambda+3\mu)}mz_{0}^{-m}\left(\begin{array}{cc}1&-\bfi\\-\bfi&-1\end{array}\right) \right]^{-1} \\ 
& +\left[ z_{0}^{m+2}\id - \frac{\lambda+\mu}{2(\lambda+3\mu)}(m+2)z_{0}^{m}\left(\begin{array}{cc}1&\bfi\\\bfi&-1\end{array}\right) \right]\left[ z_{0}^{m+2}\id - \frac{\lambda+\mu}{2(\lambda+3\mu)}mz_{0}^{m}\left(\begin{array}{cc}1&\bfi\\\bfi&-1\end{array}\right) \right]^{-1}  \end{aligned} \right]$} \\ 
&\quad = \det \scalebox{0.75}{$\left[ \begin{aligned}
& \left[z_{0}^{-(m+2)}\id - \frac{\lambda+\mu}{2(\lambda+3\mu)}(m+2)z_{0}^{-m}\left(\begin{array}{cc}1&-\bfi\\-\bfi&-1\end{array}\right)\right]\left[ z_{0}^{m+2}\id + \frac{\lambda+\mu}{2(\lambda+3\mu)}mz_{0}^{m+4}\left(\begin{array}{cc}1&-\bfi\\-\bfi&-1\end{array}\right) \right] \\ 
& +\left[ z_{0}^{m+2}\id - \frac{\lambda+\mu}{2(\lambda+3\mu)}(m+2)z_{0}^{m}\left(\begin{array}{cc}1&\bfi\\\bfi&-1\end{array}\right) \right]\left[ z_{0}^{-(m+2)}\id + \frac{\lambda+\mu}{2(\lambda+3\mu)}mz_{0}^{-m-4}\left(\begin{array}{cc}1&\bfi\\\bfi&-1\end{array}\right) \right] \end{aligned} \right]$} \\ 
&\quad = \det\left[2\id - \frac{\lambda+\mu}{\lambda+3\mu}z_{0}^{2}\left(\begin{array}{cc}1&-\bfi\\-\bfi&-1\end{array}\right) - \frac{\lambda+\mu}{\lambda+3\mu}z_{0}^{-2}\left(\begin{array}{cc}1&\bfi\\\bfi&-1\end{array}\right) \right] \\ 
&\quad = 4 - 4\left(\frac{\lambda+\mu}{\lambda+3\mu}\right)^{2} \neq 0, 
\end{align*}
which concludes our lemma. 
\end{proof}

We now show that there are no blowup solutions supported in sectors with opening angle different from $\pi$. This result is essential for the characterization of blowup solutions in two dimensions. 

\begin{lemma}\label{lem:half-space-solution-uniqueness}
Let $m\in\{0,1\}$. Assume that $H$ admits the $\theta$-Fourier representation \eqref{eq:H} 
for some constant vectors $\bfa,\bfb\in\mC^{2}$. 
Let $\theta_0\in(0,2\pi)\setminus\{\pi\}$, and write $\mathfrak{C}=\left\{re^{\bfi\theta}:r>0,\theta\in(0,\theta_0)\right\}$. Suppose that $v$ solves 
\begin{equation}
\mu\Delta v + (\lambda+\mu)\nabla(\nabla\cdot v) = H \text{ in $\mathfrak{C}$} ,\quad  v|_{\theta=\theta_{0}}=\partial_{\theta}v|_{\theta=\theta_{0}} = 0. \label{eq:Lame-2D-sector-new} 
\end{equation} 
If 
\begin{equation}
v|_{\theta=0} = \partial_{\theta}v|_{\theta=0} = 0,  \label{eq:Dirichlet-sector1-new}
\end{equation}
then $v\equiv 0$ and $H\equiv 0$. 
\end{lemma}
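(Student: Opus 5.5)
By \Cref{lem:general-solution}, on the sector $\mathfrak{C}$ the solution $v$ is given by the explicit formula there, with one pair of constant vectors $(\bfc,\bfd)\in\mC^{2}\times\mC^{2}$. The point is that a single pair is used on the whole sector: $\mathfrak{C}$ is a connected sector $\{\theta\in(0,\theta_0)\}$, so by analyticity of solutions of the constant coefficient Lam\'e system, $v$ minus the particular solution is determined by one choice of $(\bfc,\bfd)$. The plan is to impose both sets of boundary conditions and show that the resulting homogeneous linear system in $(\bfa,\bfb,\bfc,\bfd)$ has only the trivial solution when $\theta_0\ne\pi$.

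First, impose \eqref{eq:Dirichlet-sector1-new} at $\theta=0$. By \Cref{lem:explicit-solution-bvp} with $z_0=1$, the block matrix in \eqref{eq:matrix-equation1} has nonzero determinant. Hence $(\bfc,\bfd)=M_{1}^{-1}N_{1}(\bfa,\bfb)^{\intercal}$, where $M_{z_0}$ and $N_{z_0}$ denote the left and right block matrices of \eqref{eq:matrix-equation1}. Second, impose the conditions at $\theta=\theta_0$ in the same way, giving $(\bfc,\bfd)=M_{z_0}^{-1}N_{z_0}(\bfa,\bfb)^{\intercal}$ with $z_0=e^{\bfi\theta_0}$. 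Subtracting the two, the lemma reduces to showing that the $4\times4$ matrix
\begin{equation*}
K(z_0):=M_{z_0}^{-1}N_{z_0}-M_{1}^{-1}N_{1}
\end{equation*}
is injective whenever $z_0\neq\pm1$. Once $(\bfa,\bfb)=0$, the same relation gives $(\bfc,\bfd)=0$, so $H\equiv0$ and $v\equiv0$. The case $z_0=1$ is excluded because $\theta_0\in(0,2\pi)$, and $z_0=-1$ is exactly the half-plane case, where nontrivial solutions exist by \Cref{lem:blowup-wellposedness1}.

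Before computing $K$, I will simplify using rotations. Instead of the condition at $\theta_0$, I will use the identity $z^{k}=z_0^{k}(z/z_0)^{k}$: rotating by $\theta_0$ multiplies the Fourier coefficients by powers of $z_0$, and conjugates the matrices $\left(\begin{smallmatrix}1&\pm\bfi\\\pm\bfi&-1\end{smallmatrix}\right)$ by $z_0^{\mp2}$ factors, consistent with \Cref{rem:rotational-symmetry}. The two matrices $\left(\begin{smallmatrix}1&-\bfi\\-\bfi&-1\end{smallmatrix}\right)$ and $\left(\begin{smallmatrix}1&\bfi\\\bfi&-1\end{smallmatrix}\right)$ are nilpotent rank one and satisfy $P^{2}=0$. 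Therefore $(\alpha\id+\beta P)^{-1}=\alpha^{-1}\id-\alpha^{-2}\beta P$, and the inverses appearing in $M_{z_0}^{-1}$ can be written in closed form. It is also convenient to split into the cases $m=0$ and $m=1$. For $m=0$, $N_{z_0}$ has zero second block row, so $H=\bfa+\bfb$ is constant. The condition then reduces to checking that the $2\times2$ map $\bfa+\bfb\mapsto(\bfc,\bfd)$ computed from $\theta=0$ and from $\theta=\theta_0$ differ by an invertible matrix. I expect this to give a factor like $\sin^{2}\theta_0$ or $(z_0^{2}-1)$ times an elastic factor involving $1-(\frac{\lambda+\mu}{\lambda+3\mu})^{2}$, possibly with extra $\sin\theta_0$-type terms.

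The main obstacle is this explicit computation, especially for $m=1$: one must show that $\det K(z_0)$ is a nonzero multiple of a trigonometric polynomial in $\theta_0$ whose only zeros in $(0,2\pi)$ are at $\theta_0=\pi$. My first attempt is a shortcut. Since the solution from the $\theta=0$ data is a polynomial $v_{0}$ that vanishes with its gradient on the line $\{x_2=0\}$, it has the form $v_0=x_2^{2}Q$ with $Q$ a polynomial of degree $m$. The condition at $\theta_0$ then says that $x_2^{2}Q$ together with its gradient vanishes on the ray $\theta=\theta_0$. Since $\theta_0\neq\pi$, this ray is not contained in $\{x_2=0\}$, so $\sin^{2}\theta_0\neq0$ and $Q$ together with its gradient must vanish on that ray, so each component of $Q$ is divisible by $(\sin\theta_0\,x_1-\cos\theta_0\,x_2)^{2}$. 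For $m\in\{0,1\}$ a degree-$m$ polynomial with this divisibility is zero, so $Q\equiv0$, which forces $v\equiv0$ and hence $H=\mL^{\calC_0}v\equiv0$.

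The key point to check in this route is that the solution in the sector, determined by the $\theta=0$ data, is a polynomial multiple of $x_2^{2}$. This follows from uniqueness in \Cref{lem:explicit-solution-bvp} combined with the existence of a polynomial solution in $(x_2^{2}\mathcal{P}_m)^2$ from \Cref{lem:blowup-wellposedness1}, valid for $m\in\{0,1\}$ with $\mathcal{P}_m$ restricted to degree-$m$ homogeneous polynomials. If this argument is sound it avoids the determinant computation entirely; otherwise I fall back to computing $\det K(z_0)$ explicitly.
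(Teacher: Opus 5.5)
Your shortcut is correct, and it takes a genuinely different route from the paper. Your first plan via $K(z_0)$ is essentially what the paper does.

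The paper inserts the explicit general solution of \Cref{lem:general-solution} into both pairs of Cauchy conditions. It combines \eqref{eq:matrix-equation1} at $z_0=e^{\bfi\theta_0}$ with \eqref{eq:Dirichlet-sector1-new-equivalent} at $\theta=0$. After eliminating $(\bfa,\bfb)$, it obtains a homogeneous block system for $(\bfc,\bfd)$ whose determinant is a nonzero multiple of a power of $z_0-z_0^{-1}$, hence nonzero exactly when $\theta_0\neq\pi$.

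You instead argue structurally:
\begin{itemize}
\item You take the polynomial $w=x_2^2Q\in(x_2^2\mathcal{P}_m)^2$ with $\mL^{\calC_0}w=H$ from the bijectivity in the proof of \Cref{lem:blowup-wellposedness1}. This must be complexified, since $H$ in \eqref{eq:H} is a complex homogeneous polynomial of degree $m$.
\item You identify $v=w$ in $\mathfrak{C}$.
\item You read the data on $\theta=\theta_0$ as $Q=\nabla Q=0$ on a ray where $x_2\neq0$. This forces $Q=0$ when $\deg Q\le1$.
\end{itemize}

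The identification $v=w$ is a Cauchy-uniqueness statement: extend $v-w$ by zero across the ray $\theta=0$, use analyticity, then use connectedness of $\mathfrak{C}$. This is the same unique continuation input the paper relies on when it asserts that $v$ has the form of \Cref{lem:explicit-solution-bvp}. Citing that lemma with $z_0=1$, as you do, is legitimate. However, the phrase ``by analyticity'' in your first paragraph is not enough on its own to place $v$ minus the particular solution in the four-parameter family.

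What your route buys:
\begin{itemize}
\item No determinant computations and no use of the particular form \eqref{eq:H}.
\item A transparent explanation of why $\theta_0=\pi$ is the only exceptional angle: both rays then lie on $\{x_2=0\}$.
\item Since it uses only analyticity and a dimension count, it applies verbatim to any constant strongly elliptic tensor in place of the isotropic one.
\item It pinpoints what is missing for $m\ge2$ (\Cref{rem:difficulty-higher-order}). There $Q=\ell^{2}\tilde{Q}$ with $\ell=\sin\theta_0\,x_1-\cos\theta_0\,x_2$ need not vanish, and one must use the harmonicity of the components of $H=\mL^{\calC_0}(x_2^{2}\ell^{2}\tilde{Q})$.
\end{itemize}

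The paper's computation, in exchange, gives explicit formulas for $(\bfc,\bfd)$ in terms of $(\bfa,\bfb)$. It is also set up in the framework of \Cref{lem:explicit-solution-bvp-higher-order}, which the authors intend to use for higher $m$.
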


\begin{remark}\label{rem:difficulty-higher-order}
We believe that \Cref{lem:half-space-solution-uniqueness} remains valid for $m\ge 2$, with \Cref{lem:explicit-solution-bvp-higher-order} replacing \Cref{lem:explicit-solution-bvp}. In principle, this could be established through direct computation. However, the calculations become lengthy, so we do not pursue this direction further. 
\end{remark}

\begin{proof}[Proof of \Cref{lem:half-space-solution-uniqueness}] 
As mentioned above, by the unique continuation principle (UCP) for the isotropic elasticity system \cite{DLW20UCPLame,LNUW11UCPLame,LW05UCPLame}, such a solution $v$ must take the form given in \Cref{lem:explicit-solution-bvp}. Throughout the proof, we write $z_{0}=e^{\bfi\theta_{0}}$. 
Note that \eqref{eq:Dirichlet-sector1-new} equivalent to 
\begin{subequations} \label{eq:Dirichlet-sector1-new-equivalent} 
\begin{equation}
\begin{aligned}
& \left[ \id -\frac{\lambda+\mu}{2(\lambda +3\mu)}(m+2)\left(\begin{array}{cc}1&-\bfi \\ -\bfi&-1\end{array}\right) \right]\bfc + \left[\id - \frac{\lambda+\mu}{2(\lambda +3\mu)}(m+2)\left(\begin{array}{cc}1&\bfi \\ \bfi&-1\end{array}\right)\right]\bfd \\ 
&\quad =- \frac{1}{2(\lambda+3\mu)(m+1)}(\bfa+\bfb). 
\end{aligned} 
\end{equation} 
and 
\begin{equation}
\begin{aligned}
& \left[ -(m+2)\id + m\frac{\lambda+\mu}{2(\lambda +3\mu)}(m+2)\left(\begin{array}{cc}1&-\bfi \\ -\bfi&-1\end{array}\right) \right]\bfc \\
&\qquad + \left[(m+2)\id -  m\frac{\lambda+\mu}{2(\lambda +3\mu)}(m+2)\left(\begin{array}{cc}1&\bfi \\ \bfi&-1\end{array}\right)\right]\bfd \\ 
&\quad =- \frac{1}{2(\lambda+3\mu)(m+1)}(-m\bfa+ m\bfb). 
\end{aligned} 
\end{equation} 
\end{subequations} 

\medskip 

\noindent \emph{We begin with the case $m=0$.} In this case, \eqref{eq:matrix-equation1} reduces to 
\begin{equation}
\begin{aligned} 
&\left(\begin{array}{cc} 
z_{0}^{-2}\id - \frac{\lambda+\mu}{\lambda+3\mu}\left(\begin{array}{cc}1&-\bfi\\-\bfi&-1\end{array}\right) & z_{0}^{2}\id - \frac{\lambda+\mu}{\lambda+3\mu}\left(\begin{array}{cc}1&\bfi\\\bfi&-1\end{array}\right) \\ 
z_{0}^{-2}\id & -z_{0}^{2}\id 
\end{array}\right)
\left(\begin{array}{c}\bfc\\\bfd\end{array}\right) \\
&\quad = - \frac{1}{2(\lambda+3\mu)} \left(\begin{array}{c} \bfa + \bfb \\ 0 \end{array}\right). 
\end{aligned} \label{eq:matrix-equation1-m0}
\end{equation}
Since \eqref{eq:Dirichlet-sector1-new-equivalent} reads 
\begin{equation*}
\left[ \id -\frac{\lambda+\mu}{\lambda +3\mu}\left(\begin{array}{cc}1&-\bfi \\ -\bfi&-1\end{array}\right) \right]\bfc + \left[\id - \frac{\lambda+\mu}{\lambda +3\mu}\left(\begin{array}{cc}1&\bfi \\ \bfi&-1\end{array}\right)\right]\bfd  =- \frac{1}{2(\lambda+3\mu)}(\bfa+\bfb) 
\end{equation*}
and 
\begin{equation*}
\bfc - \bfd = 0, 
\end{equation*} 
then, by \eqref{eq:matrix-equation1-m0}, we have 
\begin{equation*}
\overbrace{\left(\begin{array}{cc} 
(z_{0}^{-2}-1)\id & (z_{0}^{2}-1)\id  \\ 
(z_{0}^{-2}-1)\id & -(z_{0}^{2}-1)\id 
\end{array}\right)}^{\text{with determinant $4(z_{0}-z_{0}^{-1})^{4}\neq 0$}}
\left(\begin{array}{c}\bfc\\\bfd\end{array}\right) = 0, 
\end{equation*} 
which yields $\bfc=\bfd=0$, and hence $v\equiv 0$ and $H\equiv 0$. 

\medskip 

\noindent \emph{We proceed to the case $m=1$.} In this case, \eqref{eq:matrix-equation1} reads 
\begin{equation*}
\begin{aligned}
& \left(\begin{array}{cc} 
z_{0}^{-3}\id - \frac{\lambda+\mu}{2(\lambda+3\mu)}3z_{0}^{-1}\left(\begin{array}{cc}1&-\bfi\\-\bfi&-1\end{array}\right) & z_{0}^{3}\id - \frac{\lambda+\mu}{2(\lambda+3\mu)}3z_{0}\left(\begin{array}{cc}1&\bfi\\\bfi&-1\end{array}\right) \\ 
z_{0}^{-3}\id - \frac{\lambda+\mu}{2(\lambda+3\mu)}z_{0}^{-1}\left(\begin{array}{cc}1&-\bfi\\-\bfi&-1\end{array}\right) & -\left[z_{0}^{3}\id - \frac{\lambda+\mu}{2(\lambda+3\mu)}z_{0}\left(\begin{array}{cc}1&\bfi\\\bfi&-1\end{array}\right)\right] 
\end{array}\right) 
\left(\begin{array}{c}\bfc\\\bfd\end{array}\right) \\ 
&\quad =- \frac{1}{12(\lambda+3\mu)}\left(\begin{array}{cc} 
3z_{0}^{-1}\id & 3z_{0}\id \\ 
z_{0}^{-1}\id & -z_{0}\id 
\end{array}\right) \left(\begin{array}{c}\bfa\\\bfb\end{array}\right). 
\end{aligned} 
\end{equation*} 
Multiplying the above equation with $\left(\begin{array}{cc} 
z_{0}\id & 3z_{0}\id \\ 
z_{0}^{-1}\id & -3z_{0}^{-1}\id 
\end{array}\right)$, we then reach 
\begin{equation}
\begin{aligned}
& \left(\begin{array}{cc} 
4z_{0}^{-2}\id - \frac{\lambda+\mu}{2(\lambda+3\mu)}6\left(\begin{array}{cc}1&-\bfi\\-\bfi&-1\end{array}\right) & -2z_{0}^{4}\id \\ 
-2z_{0}^{-4}\id & 4z_{0}^{2}\id - \frac{\lambda+\mu}{2(\lambda+3\mu)}6\left(\begin{array}{cc}1&\bfi\\\bfi&-1\end{array}\right) 
\end{array}\right)
\left(\begin{array}{c}\bfc\\\bfd\end{array}\right) \\ 
&\quad = -\frac{1}{2(\lambda+3\mu)} \left(\begin{array}{c}\bfa\\\bfb\end{array}\right).
\end{aligned} \label{eq:matrix-equation1-m1} 
\end{equation} 
Since \eqref{eq:Dirichlet-sector1-new-equivalent} reads  
\begin{equation*}
\begin{aligned}
& \left[ \id -\frac{\lambda+\mu}{2(\lambda +3\mu)}3\left(\begin{array}{cc}1&-\bfi \\ -\bfi&-1\end{array}\right) \right]\bfc + \left[\id - \frac{\lambda+\mu}{2(\lambda +3\mu)}3\left(\begin{array}{cc}1&\bfi \\ \bfi&-1\end{array}\right)\right]\bfd \\ 
&\quad =- \frac{1}{4(\lambda+3\mu)}(\bfa+\bfb) 
\end{aligned} 
\end{equation*} 
and 
\begin{equation*}
\begin{aligned}
& \left[ -3\id + \frac{\lambda+\mu}{2(\lambda +3\mu)}3\left(\begin{array}{cc}1&-\bfi \\ -\bfi&-1\end{array}\right) \right]\bfc + \left[3\id -  \frac{\lambda+\mu}{2(\lambda +3\mu)}3\left(\begin{array}{cc}1&\bfi \\ \bfi&-1\end{array}\right)\right]\bfd \\ 
&\quad =- \frac{1}{4(\lambda+3\mu)}(-\bfa+ \bfb), 
\end{aligned} 
\end{equation*} 
we obtain  
\begin{equation*}
\left[ 4\id -\frac{\lambda+\mu}{2(\lambda +3\mu)}6\left(\begin{array}{cc}1&-\bfi \\ -\bfi&-1\end{array}\right) \right]\bfc -2 \bfd =- \frac{1}{2(\lambda+3\mu)}\bfa 
\end{equation*} 
and 
\begin{equation*}
-2\bfc + \left[4\id -  \frac{\lambda+\mu}{2(\lambda +3\mu)}6\left(\begin{array}{cc}1&\bfi \\ \bfi&-1\end{array}\right)\right]\bfd = - \frac{1}{2(\lambda+3\mu)}\bfb.
\end{equation*} 
Combining the above two identities with \eqref{eq:matrix-equation1-m1}, we obtain 
\begin{equation*}
\overbrace{\left(\begin{array}{cc} 
2(z_{0}^{-2}-1)\id & -(z_{0}^{4}-1)\id \\ 
-(z_{0}^{-4}-1)\id & 2(z_{0}^{2}-1)\id 
\end{array}\right)}^{\text{with determinant $16(z_{0}-z_{0}^{-1})^{8}\neq 0$}}
\left(\begin{array}{c}\bfc\\\bfd\end{array}\right) = 0 
\end{equation*}
which yields $\bfc=\bfd=0$, and hence $v\equiv 0$ and $H\equiv 0$. 
\end{proof}

We can now prove the free boundary regularity in two dimensions. 

\begin{proposition}\label{prop:fb-2d} 
Assume that $d=2$ and $m\in\{0,1\}$, that $\partial D$ is piecewise $C^{1}$, and $\calC_{0}$ is isotropic. If the assumptions of \Cref{lem:non-degeneracy} are satisfied, then the support of any blowup limit $v$ of order $m+2$ is a half space. 
\end{proposition}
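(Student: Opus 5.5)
The plan is to identify the support of the blowup limit with the tangent cone $\mathfrak{C}$ and rule out every opening angle other than $\pi$, using \Cref{lem:half-space-solution-uniqueness}.

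\emph{Step 1: $v$ solves the sector problem.} Let $v$ be a blowup limit of order $m+2$ at $0$ along $r_j\to0$. Since $\partial D$ is piecewise $C^1$, the rescaled domains $D/r_j$ converge locally in Hausdorff distance to the cone $\mathfrak{C}$. After a rotation, which is allowed by \Cref{rem:rotational-symmetry} because $\calC_0$ is isotropic, we may write $\mathfrak{C}=\{re^{\bfi\theta}: r>0,\ 0<\theta<\theta_0\}$ with $\theta_0\in(0,2\pi]$. Passing to the limit in \eqref{eq:PDE-blowup} under \eqref{eq:homogeneous-assumption1}, the following hold:
\begin{itemize}
\item[(i)] the rescaled $f$ converges to $P$, and the boundary term vanishes in the limit because $|g|\le C|x|^{m+1+\alpha}$;
\item[(ii)] the term $(\tilde{\calC}-\calC_0)$ scales out, since $|\tilde{\calC}-\calC_0|\le C|x|$;
\item[(iii)] the zeroth-order term $q$ scales out.
\end{itemize}
Hence $v$ satisfies \eqref{eq:blowup-limit-PDE1} with $H=P\chi_{\mathfrak{C}}$. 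In the limit equation $H$ should be the harmonic polynomial of degree $m$ as stated; if $P$ is not harmonic, I would take $H$ to be the homogeneous polynomial $P$ and note that for $m\in\{0,1\}$ every such polynomial is harmonic, so \Cref{rem:representation-H} applies. By the $W^{2,p}$ regularity already noted, $v\in C^{1,\alpha}_{\rm loc}$, so $v=\nabla v=0$ on both rays $\theta=0$ and $\theta=\theta_0$. In particular $v=\partial_\theta v=0$ there.

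\emph{Step 2: $v\not\equiv0$ and $\supp|v|=\overline{\mathfrak{C}}$.} By \Cref{lem:non-degeneracy}, for points $x\in\overline D$ we have $\|u_{r_j}\|_{L^\infty(B_{\epsilon|x|}(x))}\ge c_\epsilon|x|^{m+2}$. Uniform convergence (weak-$\star$ in $C^{0,1}$ gives $C^0$ convergence) transfers this to $v$, so $v$ is nonzero near every point of $\mathfrak{C}$. By analyticity of $v$ inside $\mathfrak{C}$ (it solves a constant-coefficient system with polynomial right-hand side), $v$ cannot vanish on any open subset of the connected sector unless $v\equiv0$ there, which the estimate forbids. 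Therefore $\supp|v|=\overline{\mathfrak{C}}$.

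\emph{Step 3: exclude $\theta_0\ne\pi$.}
\begin{itemize}
\item If $\theta_0\in(0,2\pi)\setminus\{\pi\}$, \Cref{lem:half-space-solution-uniqueness} together with the boundary conditions from Step 1 forces $v\equiv0$, contradicting Step 2.
\item If $\theta_0=2\pi$ (an outward cusp, a degenerate limit of corners), the problem is on the slit plane with $v=\nabla v=0$ on the slit. I would apply \Cref{lem:explicit-solution-bvp} with $\theta_0$ in the interior of the angular range, as its statement allows. The slit conditions at $\theta=0$ and $\theta=2\pi$ then coincide with conditions at a single angle, and the resulting explicit polynomial solution must be single-valued and consistent. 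I expect that comparing the general-solution expressions at the two sides yields $\bfc,\bfd,\bfa,\bfb=0$, hence $v\equiv0$, the same contradiction.
\item If $\theta_0=0$ (an inward cusp), $\mathfrak{C}$ is empty. Then $v\equiv0$, contradicting Step 2 directly.
\end{itemize}
The only remaining possibility is $\theta_0=\pi$, so $\supp|v|$ is a half space.

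\emph{Main obstacle.} The delicate step is justifying that the limit cone is well defined and that $v$ inherits both the Dirichlet and the gradient conditions on its boundary rays, including the cusp cases $\theta_0\in\{0,2\pi\}$. This needs $C^{1,\alpha}$ regularity of $v$ across $\partial\mathfrak{C}$ together with the piecewise-$C^1$ convergence of the rescaled domains. Once that is in place, the algebraic contradiction is supplied entirely by \Cref{lem:half-space-solution-uniqueness}.
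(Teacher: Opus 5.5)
\textbf{Gap: the outward-cusp case $\theta_0=2\pi$.} For the generic angles your argument follows the paper: \Cref{lem:non-degeneracy} gives $v\not\equiv0$, and \Cref{lem:half-space-solution-uniqueness} rules out $\theta_0\in(0,2\pi)\setminus\{\pi\}$. For $\theta_0=2\pi$, however, the computation you expect to give $\bfc=\bfd=\bfa=\bfb=0$ cannot succeed, because the slit-plane problem has nonzero solutions. For $m=0$ and any constant $H\neq0$, the field
\begin{equation*}
v(x)=x_2^{2}\left(\begin{array}{c}H_1/(2\mu)\\ H_2/(2(\lambda+2\mu))\end{array}\right)
\end{equation*}
satisfies $\mu\Delta v+(\lambda+\mu)\nabla(\nabla\cdot v)=H$ in all of $\mR^2$. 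It also satisfies $v=\nabla v=0$ on the whole $x_1$-axis, hence on the slit $\{\theta=0\}=\{\theta=2\pi\}$. More generally, for any $m$ the polynomial solution in $(x_2^2\mathcal{P}_m)^2$ from \Cref{lem:blowup-wellposedness1}, read on all of $\mR^2$, does the same. This is consistent with the proof of \Cref{lem:half-space-solution-uniqueness}: at $\theta_0=2\pi$ one has $z_0=1$, so the determinants $4(z_0-z_0^{-1})^4$ and $16(z_0-z_0^{-1})^8$ vanish. The boundary data are also unjustified here. In a cusp the complement of $D/r_j$ collapses onto a ray, so $C^{1,\alpha}$ regularity plus $v=0$ on that ray only kills the tangential derivative, not $\nabla v$.

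\textbf{The fix.} You need a hypothesis you never invoke. The assumptions of \Cref{lem:non-degeneracy} include those of \Cref{lem:Lipschitz}, in particular that $D$ is a Lipschitz domain. Hence $\mR^2\setminus\overline{D}$ contains a truncated open cone with vertex $0$. The blowup $v$ therefore vanishes on a nondegenerate exterior sector, so $\supp|v|\neq\mR^2$, i.e.\ $\theta_0<2\pi$; this is exactly the first sentence of the paper's proof. That open exterior sector, adjacent to both boundary rays, is also what legitimately yields $v=\nabla v=0$ on the rays. The Lipschitz condition likewise excludes $\theta_0=0$, which your nondegeneracy argument already handles. With the second bullet of Step~3 replaced by this observation, the rest of your proof goes through. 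Your Step~2 identification $\supp|v|=\overline{\mathfrak{C}}$ via analyticity is a slightly more careful version of what the paper leaves implicit.
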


\begin{proof}
Since $D$ has Lipschitz boundary, and $v$ vanishes in an exterior cone, we have $\supp\,(|v|)\neq\mR^{2}$. On the other hand, by \Cref{lem:non-degeneracy}, we see that $\supp\,(|v|)\neq\emptyset$. Thus after a rotation, any blowup limit $v$ satisfies the conditions in \Cref{lem:half-space-solution-uniqueness} for some $\theta_0 \in (0,2\pi)$. Now \Cref{lem:half-space-solution-uniqueness} implies that necessarily $\theta_0 = \pi$.
\end{proof}

With \Cref{lem:weak-flatness,prop:fb-2d} at hand, the following proposition can be proved by the same argument as in \cite[Proposition~2.8]{KSS25AnisotropicII}. 

\begin{proposition}\label{prop:fb-2d-result1} 
Assume that $d=2$ and $m\in\{0,1\}$, that $\partial D$ is piecewise $C^{1}$, and $\calC_{0}$ is isotropic. If the assumptions of \Cref{lem:non-degeneracy} are satisfied, then $\partial D$ is $C^{1}$ near $0$. 
\end{proposition}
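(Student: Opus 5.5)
We argue by contradiction, exploiting the piecewise $C^{1}$ structure of $\partial D$ near $0$: write $\partial D\cap B_{r_0}=(\Gamma_-\cup\Gamma_+)\cap B_{r_0}$ with limiting normals $\nu_\pm$ and tangent cone $\mathfrak{C}$, bounded by the rays $\tilde\Gamma_\pm$ introduced above. Since each $\Gamma_\pm$ is $C^{1}$ up to $0$, the rescaled domains $D/r$ converge (locally in Hausdorff distance, and their complements too) to $\mathfrak{C}$ as $r\to0$. It therefore suffices to show that $\mathfrak{C}$ is a half plane. Indeed, if the opening angle of $\mathfrak{C}$ is $\pi$, then $\nu_-=-\nu_+$ up to orientation conventions, i.e.\ the two $C^{1}$ arcs meet with a common tangent line at $0$. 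Gluing $\Gamma_-$ and $\Gamma_+$ then gives a curve that can be written near $0$ as a graph over that tangent line whose derivative is continuous from both sides with matching limits, hence $C^{1}$.

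To identify $\mathfrak{C}$, I take any blowup limit $v$ of order $m+2$ along a sequence $r_j\to0$, which exists by \Cref{lem:Lipschitz} and Banach--Alaoglu. Passing to the limit in \eqref{eq:PDE-blowup}, using \eqref{eq:homogeneous-assumption1}, gives \eqref{eq:blowup-limit-PDE1} with $H\not\equiv0$ the rescaled limit of $P$. Here the term $(\tilde{\calC}-\calC_0)$ disappears because of the bound $\abs{\tilde\calC(x)-\calC_0}\le C\abs{x}$, and $g$ disappears because of its decay. By \Cref{prop:fb-2d}, $\supp(\abs{v})$ is a half plane $\{\omega\cdot x>0\}$. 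The key point is to relate this support to $\mathfrak{C}$. Since $v=0$ off $\overline{\mathfrak{C}}$, we have $\supp(\abs v)\subset\overline{\mathfrak{C}}$. Conversely, \Cref{lem:non-degeneracy} gives $\norm{u}_{L^\infty(B_{\epsilon|x|}(x))}\ge c_\epsilon|x|^{m+2}$ for $x\in\overline D$ near $0$. Rescaling and using uniform convergence of $u_{r_j}\to v$ (Lipschitz bounds), we get $v\not\equiv0$ in $B_{\epsilon|y|}(y)$ for every $y\in\overline{\mathfrak{C}}$, and letting $\epsilon\to0$ yields $\overline{\mathfrak{C}}\subset\supp(\abs v)$. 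Hence $\overline{\mathfrak{C}}=\{\omega\cdot x\ge0\}$, so $\mathfrak{C}$ is a half plane.

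Alternatively, and in the spirit of the paper's reference to \cite[Proposition~2.8]{KSS25AnisotropicII}, one can after rotation (\Cref{rem:rotational-symmetry}) take $\omega=e_2$ and invoke \Cref{lem:weak-flatness}. For every $\delta>0$ this gives $r$ with $\partial D\cap B_r\subset\{|x_2|\le\delta r\}$. Letting $\delta\to0$ forces both tangent rays $\tilde\Gamma_\pm$ to lie on $\{x_2=0\}$, and since $\Gamma_-\subset\{x_1\le0\}$ and $\Gamma_+\subset\{x_1\ge0\}$, they are opposite rays. This route avoids the support identification but still uses that the arcs have tangents at $0$.

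The main obstacle I expect is the passage from the tangent-cone statement to genuine $C^{1}$ regularity in a neighborhood, rather than just at $0$. The point $0$ could in principle be the only corner near which we have information, but piecewise $C^{1}$ (\cite[Definition~1.2]{KSS25AnisotropicII}) means singular points are isolated. So near $0$, $\partial D$ is $C^{1}$ away from $0$, and at $0$ the matching tangents give continuity of the normal. I would verify this carefully by parametrizing $\Gamma_\pm$ as graphs $x_2=\phi_\pm(x_1)$ over the common tangent line. The graphs are justified by the $C^{1}$ property of each arc and the fact that their tangents at $0$ are not vertical relative to that line. One then checks that $\phi_\pm(0)=0$ and $\phi_\pm'(0)=0$, so that the concatenated $\phi$ is $C^{1}$.
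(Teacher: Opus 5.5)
Your proposal is correct, and your second route is the paper's argument. The paper combines \Cref{prop:fb-2d} with \Cref{lem:weak-flatness} and defers to \cite[Proposition~2.8]{KSS25AnisotropicII}: flatness of $\partial D\cap B_{r_j}$ along the blowup sequence forces the tangent rays $\tilde\Gamma_\pm$ of the two $C^1$ arcs to be opposite rays of a single line.

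Your first route replaces the weak-flatness lemma by directly identifying $\supp\abs{v}\cap B_1$ with $\overline{\mathfrak{C}}\cap B_1$. This is legitimate because, for a piecewise $C^1$ boundary, the blowup of $D$ is known in advance to be the sector $\mathfrak{C}$. It also makes explicit a step the proof of \Cref{prop:fb-2d} uses tacitly, namely that the sector to which \Cref{lem:half-space-solution-uniqueness} is applied is $\mathfrak{C}$ itself. The weak-flatness route instead argues directly on $\partial D$, so it needs neither the convergence $D/r_j\to\mathfrak{C}$ nor passing non-degeneracy to the limit.

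The inclusion $\mathfrak{C}\subset\supp\abs{v}$ does not even require \Cref{lem:non-degeneracy}. If $v$ vanished on an open subset of $\mathfrak{C}$, then $H=\mL^{\calC_0}v=0$ there, so the polynomial $H$ would vanish identically, contradicting $H\not\equiv0$.

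One small slip: with the paper's inward-normal convention, a straight angle gives $\nu_-=\nu_+$, not $\nu_-=-\nu_+$. Your actual conclusion, a common tangent line with the two arcs on opposite sides of $0$, is still the right one.
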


We finally conclude \Cref{thm:1}.

\begin{proof}[Proof of \Cref{thm:1}]
Without loss of generality, we may assume that $x_{0}=0$. 
By \eqref{eq:non-degeneracy-condition}, the contrast $h$ is $C^{\alpha}$ near $0$ and satisfies $h(0)\neq 0$. 
Since $u^{\rm inc}\not\equiv0$ solves $(\mL^{\calC_{0}}+\kappa^{2})u^{\rm inc}=0$ in $\mR^{2}$, its analyticity \cite{MN57AnalyticSolutionEllipticSystems} implies that, for each $i=1,2$, there exists a nonnegative integer $m_{i}$ such that 
\begin{equation}
-u_{i}^{\rm inc} = H_{i} + \tilde{R}_{i}, \label{eq:ui-expansion}
\end{equation}
where $H_{i}$ is a homogeneous polynomial of order $m_{i}$ and $\abs{\tilde{R}_{i}(x)}\le C\abs{x}^{m_{i}+1}$. If the component $u_{i}^{\rm inc}\equiv0$, we simply set $m_{i}=+\infty$. 
In addition, both the divergence free and curl free parts of $u^{\rm inc}$ are such that each of their components solves a scalar Helmholtz equation (see \Cref{lem:decompose}). Thus, by a simple blowup argument, the first nontrivial homogeneous polynomial in the Taylor expansion of each of these components is harmonic. It follows from this that also each $H_i$ is harmonic.

Applying the argument of \cite[Lemma~2.5]{SS25vanishingcontrast} to each component, we obtain 
\begin{equation*}
-hu_{i}^{\rm inc} = h(0)H_{i} + R_{i} \quad\text{with}\quad \abs{R_{i}(x)}\le C\abs{x}^{m_{i}+\alpha}. 
\end{equation*}
Let $m=\min\{m_{1},m_{2}\}$. Using \eqref{eq:degenerate-condition}, we further deduce that 
\begin{equation*}
\abs{(\mL^{\tilde{\calC}-\calC_{0}}u^{\rm inc})(x)} \le C\abs{x}^{m+\alpha}, 
\end{equation*}
and hence 
\begin{equation*}
f_{i} = h(0)H_{i} + R_{i} \quad\text{with}\quad \abs{R_{i}(x)} \le C\abs{x}^{m+\alpha}. 
\end{equation*}
On the other hand, 
\begin{equation*}
\abs{g_{i}(x)} \le C\abs{x}^{m+1+\alpha}.
\end{equation*}
After this reduction, \Cref{thm:1} follows directly from the corresponding free boundary result in \Cref{prop:fb-2d-result1}. 
\end{proof}

\subsection{The higher-dimensional obstacle case} 

In this relatively simple geometric setting, the approach of \cite[Theorem~1.9]{KSS25AnisotropicII} applies and allows us to circumvent the introduction of a balanced energy functional and its monotonicity formula. We begin with the following lemma, whose proof follows the same line of argument as \cite[Lemma~3.4]{KSS25AnisotropicII}: 

\begin{lemma}\label{lem:high-dimen-poly}
Let $d\ge 3$, let 
\begin{equation*}
\tilde{\Gamma}_{\pm} = \left\{ re^{\bfi\theta_{\pm}}\in\mC\cong\mR^{2} : r\ge 0\right\}\times\mR^{d-2}
\end{equation*}
with distinct $\theta_{\pm}\in[0,2\pi)$, and let $\mathfrak{C}$ be a connected component of $\mR^d \setminus (\tilde{\Gamma}_{+}\cup\tilde{\Gamma}_{-})$.  Let $v$ be a solution to 
\begin{equation}
\mL^{\calC_{0}}v = P\chi_{\mathfrak{C}} \quad \text{in $B_{1}\subset\mR^{d}$} \label{eq:blowup-in-ball}
\end{equation}
and assume that $v=0$ in $B_{1}\setminus \ol{\mathfrak{C}}$, where $P\not\equiv0$, and that, for each $i$, the component $P_{i}$ is either identically zero or a homogeneous polynomial of degree $k$. Then $v$ is a homogeneous polynomial of order $k+2$ in $B_1 \cap \mathfrak{C}$. 
\end{lemma}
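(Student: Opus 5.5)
The plan is to follow \cite[Lemma~3.4]{KSS25AnisotropicII}: first exploit the translation invariance of the configuration in the edge directions $x''=(x_3,\dots,x_d)$, and then reduce to a polynomial identity in the cross-sectional variables $x'=(x_1,x_2)$.

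\emph{Step 1: regularity.} Since $P\chi_{\mathfrak{C}}\in L^\infty$, the interior $L^p$ estimates for $\mL^{\calC_0}$ give $v\in (W^{2,p}_{\rm loc}(B_1))^d$ for all $p<\infty$. Hence $v\in C^{1,\alpha}$, and $v=\nabla v=0$ on $\partial\mathfrak{C}\cap B_1$.

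\emph{Step 2: derivatives in the edge directions.} For $j\ge 3$, the function $w=\partial_j v$ satisfies $\mL^{\calC_0}w=(\partial_jP)\chi_{\mathfrak{C}}$. This uses that $\calC_0$ is constant and that $\chi_{\mathfrak{C}}$ does not depend on $x''$. Moreover $w$ still vanishes in $B_1\setminus\ol{\mathfrak{C}}$, and the degree of the right-hand side drops by one. Iterating $k+1$ times, every derivative $\partial^{\beta}_{x''}v$ with $|\beta|=k+1$ solves $\mL^{\calC_0}\partial^\beta_{x''}v=0$ in $B_1$ and vanishes on the open set $B_1\setminus\ol{\mathfrak{C}}$. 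By analyticity of solutions of constant-coefficient elliptic systems \cite{MN57AnalyticSolutionEllipticSystems}, $\partial^\beta_{x''}v\equiv0$. So $v$ is a polynomial in $x''$ of degree at most $k$, with coefficients $v_\gamma(x')$ depending only on $x'$.

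\emph{Step 3: the cross-sectional problem and homogeneity.} I would run a downward induction on $|\gamma|$. Inserting $v=\sum_\gamma v_\gamma(x')x''^\gamma$ into the equation and comparing coefficients of $x''^\gamma$ gives 2D-type systems for $v_\gamma$. These are of the form $\mL' v_\gamma=(\text{known polynomial in }x')\chi_{\mathfrak{C}'}-(\text{lower-order couplings from }v_{\gamma'},\ |\gamma'|>|\gamma|)$, where $\mathfrak{C}'$ is the planar sector and $\mL'$ is the block of $\mL^{\calC_0}$ involving only $\partial_1,\partial_2$. The couplings are first- and zeroth-order in $\partial_{x''}$ hitting higher coefficients.

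Each $v_\gamma$ vanishes outside $\ol{\mathfrak{C}'}$ near $0$. Rescaling gives uniqueness: if $\tilde v$ is the difference of $v_\gamma$ and its expected homogeneous part, then $\tilde v$ solves a homogeneous elliptic system in the plane and vanishes on an open set, so $\tilde v\equiv0$ by analyticity. The expected homogeneous part is obtained from the homogeneous degree-$(k+2-|\gamma|)$ polynomial solution of the forced system, which exists by the finite-dimensional bijection argument of \Cref{lem:blowup-wellposedness1} applied in the sector.

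More simply, I would use a scaling argument directly on $v$. Set $\tilde v(x)=v(x)-t^{-(k+2)}v(tx)$. Because $P$ is homogeneous and $\mathfrak{C}$ is a cone, $\tilde v$ solves $\mL^{\calC_0}\tilde v=0$ in $B_1$ and vanishes on $B_1\setminus\ol{\mathfrak{C}}$. Analyticity then forces $\tilde v\equiv0$ for $t\in(0,1]$, so $v$ is homogeneous of degree $k+2$. Combined with Step 2 and $C^{1,\alpha}$ regularity, this gives that $v$ restricted to $\mathfrak{C}$ is a real-analytic homogeneous function of degree $k+2$, polynomial in $x''$.

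\emph{Step 4: polynomiality in $x'$.} This is the main obstacle. I would write $v|_{\mathfrak{C}}=p+q$ with $p$ a homogeneous polynomial particular solution of degree $k+2$; such a $p$ exists by surjectivity of $\mL^{\calC_0}$ on homogeneous polynomials, as in \Cref{lem:general-solution}. Then $q$ is homogeneous of degree $k+2$ and solves $\mL^{\calC_0}q=0$ in $\mathfrak{C}$. Moreover $q$ is real-analytic across the interior of $\mathfrak{C}$, and its restriction to a ray-neighbourhood of the origin extends analytically through $0$ as a homogeneous function. A homogeneous function of integer degree that is smooth at $0$ is a polynomial, so it suffices to know that $q$ is analytic near $0$ inside $\mathfrak{C}$.

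To get this, I would instead show that $v$ agrees in $\mathfrak{C}$ with an entire solution. For this I would use the Cauchy data $v=\nabla v=0$ on one face $\tilde{\Gamma}_+$. By Cauchy--Kovalevskaya uniqueness for the analytic system, the solution in $\mathfrak{C}$ near $\tilde\Gamma_+$ coincides with the unique polynomial solution $p_+$ of $\mL^{\calC_0}p_+=P$ with zero Cauchy data on the hyperplane containing $\tilde\Gamma_+$. That $p_+$ is polynomial follows from \Cref{lem:blowup-wellposedness1} applied after rotation, whose finite-dimensional bijection argument works in any dimension. By analyticity in the connected open set $\mathfrak{C}$, we get $v=p_+$ on $B_1\cap\mathfrak{C}$, which proves the claim.
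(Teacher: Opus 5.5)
Your final paragraph of Step~4 is essentially the paper's proof: the paper rotates so that $\tilde{\Gamma}_{+}\subset\{x_{1}=0\}$, gets $w\in(x_{1}^{2}\mathcal{P}_{k})^{d}$ with $\mL^{\calC_{0}}w=P$ from injectivity plus a dimension count, and then applies unique continuation to $v-w$, which has zero Cauchy data on the face, to conclude $v=w$ in $B_{1}\cap\mathfrak{C}$. Steps~2--3 and the first half of Step~4 are unnecessary, since homogeneity of degree $k+2$ comes for free from $w$; the uniqueness near the face is also best justified by extending $v-w$ by zero across the face and using analyticity, rather than Cauchy--Kovalevskaya, because $v$ is only $C^{1,\alpha}$ up to the face.
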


\begin{proof}
Since homogeneity of polynomials is preserved under orthogonal transformations, it follows from \Cref{rem:rotational-symmetry} that it suffices to prove the lemma in the case $\theta_{+}=\pi/2$, that is, 
\begin{equation*}
\tilde{\Gamma}_{+} = \{0\}\times\mR_{\ge 0} \times \mR^{d-2}. 
\end{equation*}

Let $\mathcal{P}_{k}$ denote the space of homogeneous polynomials on $\mR^d$ of degree $k$. As in \cite[Lemma~3.3]{SS25vanishingcontrast}, we observe that the operator 
\begin{equation*}
\mL^{\calC_{0}} : (x_{1}^{2}\mathcal{P}_{k})^{d} \rightarrow (\mathcal{P}_{k})^{d} 
\end{equation*}
is linear and injective (unique continuation holds for $\mL^{\calC_{0}}$ for instance by analyticity). Since it acts between finite-dimensional spaces of the same dimension, it is also surjective. It follows that there is $w \in (x_{1}^{2}\mathcal{P}_{k})^{d}$ such that $\mL^{\calC_{0}} w = P$ in $\mR^d$.

Now $u = v - w$ satisfies 
\[
\mL^{\calC_{0}} u = 0 \text{ in $B_1 \cap \mathfrak{C}$}, \qquad u|_{x_1=0} = \p_{x_1} u|_{x_1=0} = 0.
\]
Again by unique continuation $u = 0$ in $B_1 \cap \mathfrak{C}$, which proves the lemma.
\end{proof}

Having established \Cref{lem:high-dimen-poly}, we are now in a position to prove \Cref{thm:2} by adapting Federer's dimension reduction argument, following \cite[Theorem~1.9]{KSS25AnisotropicII}.

\begin{proof}[Proof of \Cref{thm:2}] 
Suppose, to the contrary, the medium does not scatter for some incident wave $u^{\rm inc}$ satisfying either $u^{\rm inc}(x_{0})\neq 0$ or $\nabla u^{\rm inc}(x_{0})\neq 0$. The first two steps are the same as in the proof of the two-dimensional case (\Cref{thm:1}): 
\begin{itemize}
\item First, one obtains the Lipschitz regularity of $u^{\rm sc}$. 
\item Next, one sees that any blowup limit $v$ satisfies (with notation as in \Cref{lem:high-dimen-poly}) 
\begin{equation*}
\mL^{\calC_{0}}v = H\chi_{\mathfrak{C}} \quad \text{in $B_{1}\subset\mR^{d}$} 
\end{equation*}
where $H\not\equiv 0$, and that, for each $i$, the component $H_{i}$ is either identically zero or a homogeneous harmonic polynomial of degree $m$, and $v=0$ in $B_{1}\setminus \mathfrak{C}$. We recall that the $C_{\rm loc}^{1}$ regularity of $v$ is a consequence of the Calder{\'o}n-Zygmund-type estimates for elliptic systems \cite[Theorem~7.3]{GM12EllipticSystems}.
\end{itemize}
In view of \Cref{rem:rotational-symmetry}, assume $\theta_{+}=0$. We now apply Federer's dimension reduction argument. Fix a point $e=e_{d}\in\tilde{\Gamma}_{+}\cap\tilde{\Gamma}_{-}$. Then, for each $i$, the component $H_{i}$ is either identically zero or has a zero at $e$ of order $0 \le k_i\le m$. Let $k = \min\{ k_i \}$. By a direct application of \Cref{lem:Lipschitz}, 
\begin{equation*}
\abs{v(z+e)}+\abs{z}\abs{\nabla v(z+e)} \le C\abs{z}^{k+2}, 
\end{equation*}
which naturally motivates the scaling function $v_{r}:=v(rz+e)/r^{k+2}$. Next, the blowup limit $w$ of $v_{r}$ satisfies \eqref{eq:blowup-in-ball} as in \Cref{lem:high-dimen-poly}, and vanishes in $B_{1}\setminus \mathfrak{C}$. Therefore, \Cref{lem:high-dimen-poly} implies that $w$ is homogeneous of order $k+2$. Arguing as in \cite[Theorem~1.10]{SS25vanishingcontrast}, we deduce that $w$ is independent of $x_{d}$. 

Repeating this argument successively shows that $w$ depends only on $x_{1}$ and $x_{2}$. Let $\mathfrak{C}_2$ be the projection of $\mathfrak{C}$ to the $(x_1, x_2)$ variables. It is straightforward to check that 
\begin{equation*}
\mu\Delta w_{i} = H_{i} \text{ in $\mathfrak{C}_2$} ,\quad w_{i}|_{\{x_{2}=0,x_1 > 0\}}=\partial_{x_{2}}w_{i}|_{\{x_{2}=0, x_1 > 0\}} = 0
\end{equation*}
for all $i=3,\cdots,d$, and 
\begin{equation*}
\left\{\begin{aligned}
& (\lambda + 2\mu)\partial_{1}^{2}w_{1} + \mu\partial_{2}^{2}w_{1} + (\lambda+\mu)\partial_{1}\partial_{2}w_{2} = H_{1}, \\ 
& \mu\partial_{1}^{2}w_{2} + (\lambda + 2\mu)\partial_{2}^{2}w_{2} + (\lambda+\mu)\partial_{1}\partial_{2}w_{1} = H_{2}, 
\end{aligned}\right. 
\end{equation*}
in $\mathfrak{C}_2$, together with the boundary conditions 
\begin{equation*}
w_{j}|_{\{x_{2}=0, x_1 > 0\}}=\partial_{x_{2}}w_{j}|_{\{x_{2}=0, x_1 > 0\}} = 0 \quad \text{for $j=1,2$.} 
\end{equation*}
If $H_{i}\not\equiv0$ for some $i\ge 3$, then the result follows by adapting the arguments of \cite{SS25vanishingcontrast}. On the other hand, if $H_{i}\not\equiv0$ for some $i\in\{1,2\}$, the remaining steps follow from the proof of \Cref{thm:1}. 
\end{proof}

\section{Proof of the main results in the nonvanishing Bernoulli case} 

Without loss of generality, we may assume that $x_{0}=0$. 
Using \eqref{eq:ui-expansion} and the analyticity of $u^{\rm inc}$, we write 
\begin{equation}
\begin{aligned} 
u^{\rm inc}(x) - u^{\rm inc}(0) &= H(x) + R_{0}(x), \\ 
\nabla\otimes u^{\rm inc}(x) &= \nabla\otimes H(x) + R_{1}(x) ,\\ 
\nabla^{\otimes2}\otimes u^{\rm inc}(x) &= \nabla^{\otimes 2}\otimes H(x) + R_{2}(x) 
\end{aligned} \label{eq:decomposition-incident}
\end{equation}
where $H\not\equiv0$ and each component $H_{i}$ vanishes identically or is a harmonic homogeneous polynomial of order $m \ge 1$ and 
\begin{equation*}
\abs{R_{0}(x)}\le C\abs{x}^{m+1} ,\quad \abs{R_{1}(x)}\le C\abs{x}^{m} ,\quad \abs{R_{2}(x)}\le C\abs{x}^{m-1}. 
\end{equation*}
Recall that $u^{\rm sc}$ satisfies \eqref{eq:nonscattering-Bernuolli}. Using \eqref{eq:decomposition-incident}, the functions $f$ and $g$ defined in \eqref{eq:fg-functions} satisfy 
\begin{equation*}
\abs{f(x)} \le C\abs{x}^{m-2} ,\quad \abs{g(x)}\le C\abs{x}^{m-1}. 
\end{equation*}
Therefore, by a direct application of \Cref{lem:Lipschitz} (with $m+2$ being replaced by $m$) we obtain 
\begin{equation}
\abs{u^{\rm sc}(x)} + \abs{x}\abs{\nabla u^{\rm sc}(x)} \le C\abs{x}^{m}, \label{eq:order-solution-bernoulli}
\end{equation}
which naturally motivates the scaling function $\tilde{u}_{r}:=u^{\rm sc}(rx)/r^{m}$, which also satisfies the estimate \eqref{eq:order-solution-bernoulli}. 

In this setting, we say that $v$ is a blowup limit of $u^{\rm sc}$ of order $m$ at $0$ if there is a sequence $r_{j}\rightarrow 0$ so that $\tilde{u}_{r_{j}}\rightarrow v$ in $(C^{0,1}(\overline{B_{1}}))^{d}$ weak-$\star$. 

\subsection{The two-dimensional piecewise \texorpdfstring{$C^{1}$}{C1} obstacle case} 

We begin with the case $d=2$. Let $\Gamma_{\pm}$, $\tilde{\Gamma}_{\pm}$ and $\nu_{\pm}$ be as introduced in \Cref{subsec:2D-case}. Any blowup limit $v\in (C^{0,1}(\overline{B_{1}}))^{2}$ of order $m$ of $u^{\rm sc}$ solves the equation 
\begin{equation*}
\mL^{\calC_{0}}v = \mT^{\tilde{\calC}(0)-\calC_{0}}H \mH^{1}\lfloor\tilde{\Gamma}_{\pm} = \sum_{\pm} \left( \nu_{\pm}\cdot(\tilde{\calC}(0)-\calC_{0}):(\nabla\otimes H) \right)\mH^{1}\lfloor\tilde{\Gamma}_{\pm} 
\end{equation*}
which can be equivalently written as 
\begin{equation}
\mL^{\calC_{0}}v=0 \text{ in $\mathfrak{C}$},\quad v|_{\partial\mathfrak{C}}=0 ,\quad \mT^{\calC_{0}}v|_{\partial\mathfrak{C}} = \mT^{\tilde{\calC}(0)-\calC_{0}}H. \label{eq:Bernoulli-equations}
\end{equation}

For simplicity, we restrict our attention to the simplest case $m=1$. 

\begin{lemma}\label{lem:nonvanishing-Bernoulli-m1}
Assume that $H$ is (harmonic) homogeneous of degree $m=1$. Let $0 < \theta_{0} < 2\pi$ and write 
\begin{equation*}
\mathfrak{C} := \left\{ re^{\bfi\theta} : r>0 , \theta \in (0,\theta_{0}) \right\}. 
\end{equation*}
The unique solution $v$ to 
\begin{equation*}
\mL^{\calC_{0}}v\equiv \mu\Delta v + (\lambda+\mu)\nabla(\nabla\cdot v) = 0 \text{ in $\mathfrak{C}$} ,\quad v|_{\theta=0}=0 ,\quad \mT^{\calC_{0}}v|_{\theta=0} = \mT^{\tilde{\calC}(0)-\calC_{0}}H,
\end{equation*}
is given by 
\begin{equation}
v(re^{\bfi\theta}) = r\sin\theta \left(\begin{array}{cc}1/\mu&0\\0&1/(\lambda+2\mu)\end{array}\right)\mT^{\tilde{\calC}(0)-\calC_{0}}H. \label{eq:soln-order1} 
\end{equation}  
\end{lemma}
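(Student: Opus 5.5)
Since $H$ is homogeneous of degree $1$, $\nabla\otimes H$ is a constant matrix. On $\{\theta=0\}$ the normal $\nu$ is constant, so the Neumann datum $\mathbf{g}:=\mT^{\tilde{\calC}(0)-\calC_{0}}H=\nu\cdot(\tilde{\calC}(0)-\calC_{0}):(\nabla\otimes H)$ is a constant vector in $\mR^{2}$. The plan is to check existence by direct substitution of \eqref{eq:soln-order1} and then to argue uniqueness by unique continuation, as in the proof of \Cref{lem:blowup-wellposedness1}.

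\emph{Existence.} The candidate $v(x)=x_{2}\,M\mathbf{g}$ with $M=\mathrm{diag}(1/\mu,1/(\lambda+2\mu))$ is linear, so $\mL^{\calC_{0}}v=0$ holds trivially. It also vanishes on $\{x_{2}=0\}=\{\theta=0\}$. For the traction, use the isotropic formula: $\calC_{0}:(\nabla\otimes v)=\lambda(\nabla\cdot v)\id+\mu(\nabla\otimes v+(\nabla\otimes v)^{\intercal})$. Here $\nabla\otimes v=e_{2}\otimes M\mathbf{g}$ and $\nabla\cdot v=g_{2}/(\lambda+2\mu)$. Contracting with the normal $\nu=e_{2}$, which points into $\mathfrak{C}$ (the paper uses the inward normal to $D$), the first component is
\[
\mu\cdot\frac{g_{1}}{\mu}=g_{1},
\]
and the second component is
\[
\lambda\frac{g_{2}}{\lambda+2\mu}+2\mu\frac{g_{2}}{\lambda+2\mu}=g_{2}.
\]
This gives $\mT^{\calC_{0}}v|_{\theta=0}=\mathbf{g}$. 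The one point to double-check is the sign convention for $\nu$, i.e.\ that the normal on $\theta=0$ is $+e_{2}$, consistent with \eqref{eq:Bernoulli-equations}.

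\emph{Uniqueness.} The intended class is solutions that are homogeneous of degree $1$, or at least Lipschitz, so the only freedom is a homogeneous solution $w$ with $\mL^{\calC_{0}}w=0$ in $\mathfrak{C}$ and zero Cauchy data $w=\mT^{\calC_{0}}w=0$ on $\{\theta=0\}$. I would extend $w$ by zero across the ray $\{\theta=0\}$ locally near a boundary point away from the origin. Vanishing Cauchy data makes the extension a weak solution of $\mL^{\calC_{0}}w=0$ in a neighborhood. By analyticity \cite{MN57AnalyticSolutionEllipticSystems}, or by the UCP for the Lamé system, $w\equiv0$ on the connected set $\mathfrak{C}$. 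Equivalently, one can use the representation of \Cref{lem:general-solution} with $H\equiv0$ (homogeneous solutions of degree $m+2=1$, i.e.\ $m=-1$; one can redo the Wirtinger computation for degree-one solutions). Every degree-one vector field is a solution, and the Cauchy data on $\theta=0$ determine all four entries of the constant matrix $\nabla w$: $w|_{\theta=0}=0$ gives $\partial_{1}w=0$, and the traction then fixes $\partial_{2}w$ through the invertible diagonal map $M^{-1}$. The main subtle point is that no condition on the other ray $\theta=\theta_{0}$ is imposed. Uniqueness therefore rests entirely on Cauchy uniqueness from the single ray $\theta=0$, which is exactly what UCP provides, and it holds for every $\theta_{0}\in(0,2\pi)$.
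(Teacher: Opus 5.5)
Your proposal is correct and follows essentially the paper's argument. The paper observes that degree-one solutions vanishing on $\{\theta=0\}$ are of the form $\bfc\,x_{2}$ (your ``alternative'' route) and uses $\mT^{\calC_{0}}v|_{\theta=0}=\mathrm{diag}(\mu,\lambda+2\mu)\,\partial_{2}v$ to identify $\bfc$, which is exactly your traction computation with $\nu=e_{2}$; it then cites analyticity/unique continuation for uniqueness, which your zero-extension argument spells out in more detail (one harmless slip: $\mathbf{g}$ lies in $\mC^{2}$, not $\mR^{2}$, since $u^{\rm inc}$ may be complex).
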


\begin{proof}
We first observe that every homogeneous solution of degree $1$ to 
\begin{equation*}
\mL^{\calC_{0}}v\equiv \mu\Delta v + (\lambda+\mu)\nabla(\nabla\cdot v) = 0 \text{ in $\mathfrak{C}$} ,\quad v|_{\theta=0}=0  
\end{equation*}
is of the form 
\begin{equation*}
v(x,y) = \bfc y ,\quad \bfc\in\mC^{2}. 
\end{equation*}
Since 
\begin{equation*}
\mT^{\calC_{0}}v|_{\theta=0} = \left(\begin{array}{cc}\mu&0\\0&\lambda+2\mu\end{array}\right)\partial_{y}v|_{y=0} = \left(\begin{array}{cc}\mu&0\\0&\lambda+2\mu\end{array}\right)\bfc, 
\end{equation*}
\eqref{eq:soln-order1} follows immediately. The uniqueness of this solution follows from either the analyticity of solutions or the unique continuation property, as discussed above. 
\end{proof}

As an immediate consequence, it is easy to see that there are no blowup solutions supported in sectors with opening angle different from $\pi$. This result is essential for the characterization of blowup solutions in two dimensions. 

\begin{corollary}\label{cor:nonvanishing-Bernoulli-m1}
Assume that all the assumptions of \Cref{lem:nonvanishing-Bernoulli-m1} are satisfied. 
If the unique solution $v$ satisfies 
\begin{equation*}
v|_{\theta=\theta_{0}} = 0 
\end{equation*}
for some $\theta_{0}\in(0,2\pi)\setminus\{\pi\}$, then $v\equiv0$. 
\end{corollary}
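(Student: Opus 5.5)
By \Cref{lem:nonvanishing-Bernoulli-m1}, the unique solution is explicit:
\begin{equation*}
v(re^{\bfi\theta}) = r\sin\theta\,\bfc, \qquad \bfc := \left(\begin{array}{cc}1/\mu&0\\0&1/(\lambda+2\mu)\end{array}\right)\mT^{\tilde{\calC}(0)-\calC_{0}}H \in\mC^{2}.
\end{equation*}
The plan is therefore to impose the extra condition $v|_{\theta=\theta_{0}}=0$ directly on this formula and read off $\bfc=0$.

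Evaluating at $\theta=\theta_{0}$ gives $v(re^{\bfi\theta_{0}}) = r\sin\theta_{0}\,\bfc$ for all $r>0$, so the hypothesis $v|_{\theta=\theta_{0}}=0$ is equivalent to $\sin\theta_{0}\,\bfc = 0$. Since $\theta_{0}\in(0,2\pi)\setminus\{\pi\}$, we have $\sin\theta_{0}\neq 0$, and hence $\bfc=0$. Substituting back into \eqref{eq:soln-order1} then gives $v\equiv 0$ in $\mathfrak{C}$.

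For later use, note what $\bfc=0$ means. The diagonal matrix $\mathrm{diag}(1/\mu,1/(\lambda+2\mu))$ is invertible because $\mu>0$ and $\lambda+2\mu>0$. So $\bfc=0$ forces $\mT^{\tilde{\calC}(0)-\calC_{0}}H=0$ on $\{\theta=0\}$, that is, the Bernoulli data vanishes. This is exactly the degeneracy that will contradict \eqref{eq:nondegenerate-Bernoulli} in the proof of \Cref{thm:3}.

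There is no real obstacle here; the only point needing care is the reliance on the uniqueness part of \Cref{lem:nonvanishing-Bernoulli-m1}. That uniqueness is what guarantees that every solution of the boundary value problem in $\mathfrak{C}$ is of the form $r\sin\theta\,\bfc$, so that the explicit formula genuinely represents the solution in the whole sector, including near the ray $\theta=\theta_{0}$.
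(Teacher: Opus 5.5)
Your proposal is correct and matches the paper's argument. The paper treats the corollary as an immediate consequence of the explicit formula \eqref{eq:soln-order1}: because $\sin\theta_{0}\neq 0$ for $\theta_{0}\in(0,2\pi)\setminus\{\pi\}$, the condition $v|_{\theta=\theta_{0}}=0$ forces the constant vector, and hence $v$, to vanish. Your follow-up remark, that invertibility of $\mathrm{diag}(1/\mu,1/(\lambda+2\mu))$ then yields $\mT^{\tilde{\calC}(0)-\calC_{0}}H=0$, is exactly how the corollary is used in the proof of \Cref{thm:3}.
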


We are now ready to prove our main result. 

\begin{proof}[Proof of \Cref{thm:3}]
Suppose, for the sake of contradiction, that the medium does not scatter for some incident wave $u^{\rm inc}$ satisfying \eqref{eq:nondegenerate-Bernoulli}. By \Cref{cor:nonvanishing-Bernoulli-m1}, the blowup limit $v$ of $u^{\rm sc}$ of order $m=1$ at $x_{0}$ must vanish identically. It then follows from \eqref{eq:Bernoulli-equations} that 
\begin{equation*}
\lim_{x\rightarrow x_{0}}\mT^{\tilde{\calC}-\calC_{0}}u^{\rm inc}(x) = \mT^{\tilde{\calC}(x_{0})-\calC_{0}}H = 0. 
\end{equation*}
This contradicts \eqref{eq:nondegenerate-Bernoulli}, thereby completing the proof. 
\end{proof}

\subsection{The high-dimensional obstacle case} 

We now extend the above argument to the case $d\ge 3$. Any blowup limit $v\in (C^{0,1}(\overline{B_{1}}))^{d}$ of order $m$ of $u^{\rm sc}$ solves the equation 
\begin{equation*}
\mL^{\calC_{0}}v = \mT^{\tilde{\calC}(0)-\calC_{0}}H \mH^{d-1}\lfloor\tilde{\Gamma}_{\pm} = \sum_{\pm} \left( \nu_{\pm}\cdot(\tilde{\calC}(0)-\calC_{0}):(\nabla\otimes H) \right)\mH^{d-1}\lfloor\tilde{\Gamma}_{\pm} 
\end{equation*}
which can be equivalently written as 
\begin{equation}
\mL^{\calC_{0}}v=0 \text{ in $\mathfrak{C}\times\mR^{d-2}$},\quad v|_{\partial\mathfrak{C}\times\mR^{d-2}}=0 ,\quad \mT^{\calC_{0}}v|_{\partial\mathfrak{C}\times\mR^{d-2}} = \mT^{\tilde{\calC}(0)-\calC_{0}}H. \label{eq:Bernoulli-equations-high-dim}
\end{equation}

For simplicity, we restrict our attention to the simplest case $m=1$. 

\begin{lemma}
Assume that $H$ is (harmonic) homogeneous of degree $m=1$. Let $0 < \theta_{0} < 2\pi$ and write 
\begin{equation*}
\mathfrak{C} := \left\{ re^{\bfi\theta} : r>0 , \theta \in (0,\theta_{0}) \right\}. 
\end{equation*}
The unique solution $v$ to 
\begin{equation*}
\mL^{\calC_{0}}v\equiv \mu\Delta v + (\lambda+\mu)\nabla(\nabla\cdot v) = 0 \text{ in $\mathfrak{C}\times\mR^{d-2}$} ,\quad v|_{\theta=0}=0 ,\quad \mT^{\calC_{0}}v|_{\theta=0} = \mT^{\tilde{\calC}(0)-\calC_{0}}H. 
\end{equation*}
is given by 
\begin{equation}
v(x_{1},x_{2},x_{3},\cdots,x_{d}) = \left[ {\rm diag}\left(\frac{1}{\mu},\frac{1}{\lambda+2\mu},\frac{1}{\mu},\cdots,\frac{1}{\mu}\right)\left.\mT^{\tilde{\calC}(0)-\calC_{0}}H\right|_{x_{2}=0} \right] x_{2}. \label{eq:soln-order1-high-dim} 
\end{equation}  
\end{lemma}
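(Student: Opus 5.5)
The plan mirrors the proof of \Cref{lem:nonvanishing-Bernoulli-m1}: first pin down the space of admissible degree-one solutions, then read off the coefficient vector from the traction condition, and finally invoke uniqueness.

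\emph{Step 1: classify the linear solutions.} Since the blowup limit $v$ is homogeneous of degree $m=1$, each component is a homogeneous polynomial of degree $1$ in $\mathfrak{C}\times\mR^{d-2}$. Write $v(x)=Mx$ for a constant matrix $M\in\mC^{d\times d}$; then $\mL^{\calC_0}v=0$ holds automatically. The Dirichlet condition $v|_{\theta=0}=0$ requires $v$ to vanish on the half-hyperplane $\{x_2=0,\ x_1>0\}\times\mR^{d-2}$. This set spans $\{x_2=0\}$, so $Mx=0$ for all $x$ with $x_2=0$. Hence $v(x)=\bfc\,x_2$ for some $\bfc\in\mC^{d}$.

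\emph{Step 2: compute the traction.} On $\{x_2=0\}$ the inward normal is $\nu=e_2$. Using \eqref{eq:isotropic}, for $v=\bfc x_2$ we have
\[
(\mT^{\calC_0}v)_{\ell}=\sum_{k,p}(C_0)_{2\ell kp}\,\partial_k v_p = \lambda\delta_{2\ell}c_2+\mu(c_\ell+\delta_{\ell 2}c_2).
\]
This gives $\mu c_\ell$ for $\ell\ne 2$ and $(\lambda+2\mu)c_2$ for $\ell=2$. In other words,
\[
\mT^{\calC_0}v={\rm diag}(\mu,\lambda+2\mu,\mu,\dots,\mu)\,\bfc .
\]
Imposing the traction condition at $x_2=0$ and inverting the diagonal matrix, which is possible because $\mu>0$ and $\lambda+2\mu>0$, yields \eqref{eq:soln-order1-high-dim}.

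\emph{Step 3: consistency of the data.} The right-hand side $\mT^{\tilde{\calC}(0)-\calC_0}H$ involves only $\nabla\otimes H$. Because $H$ is homogeneous of degree one, $\nabla\otimes H$ is constant, so the traction datum is a constant vector. This matches the constant traction produced by $\bfc x_2$, and restricting to $x_2=0$ in the formula is harmless.

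\emph{Step 4: uniqueness.} Take the difference of two solutions. Extend it by zero across $\{x_2<0\}$, locally near $\{x_2=0,\ x_1>0\}$, where both the Cauchy data (Dirichlet and traction) vanish. The extension solves $\mL^{\calC_0}w=0$ in a neighbourhood, and by analyticity \cite{MN57AnalyticSolutionEllipticSystems} (or the UCP for the Lam\'e system) we get $w\equiv0$ in the connected domain $\mathfrak{C}\times\mR^{d-2}$. This is the same argument as in \Cref{lem:blowup-wellposedness1}.

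I expect the main subtlety to lie in Step 1: justifying that the solution under consideration is homogeneous of degree one, and hence linear. I would take this from the blowup setting (order-$m$ scaling with $m=1$), exactly as in \Cref{lem:nonvanishing-Bernoulli-m1}. Uniqueness in Step 4 then makes this choice consistent with any other solution of the stated problem.
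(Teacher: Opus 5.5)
Your proposal is correct and is essentially the argument the paper intends: the paper omits this proof as straightforward, and your steps are the direct analogue of its proof of \Cref{lem:nonvanishing-Bernoulli-m1} (ansatz $\bfc\, x_2$, traction ${\rm diag}(\mu,\lambda+2\mu,\mu,\dots,\mu)\bfc$ on $\{x_2=0\}$, and uniqueness from vanishing Cauchy data via analyticity or the UCP). One caveat is that your Step~1 claim that degree-one homogeneity forces linearity is false for $d\ge 3$, and also unnecessary: for instance $x_3\theta\, e_3-k\theta\,(x_1,x_2,0,\dots,0)+\tfrac{k}{2}x_2 e_1$ with $k=\frac{\lambda+\mu}{2(\lambda+2\mu)}$ is a nonlinear degree-one Lam\'e solution vanishing on $\{\theta=0\}$, so the proof should rest only on Step~2 showing that $\bfc\, x_2$ solves the problem and Step~4 showing that any solution with the same Cauchy data coincides with it, as your closing remark already indicates.
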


The proof of the above lemma is straightforward, and we therefore omit the details. We are now ready to prove the main result.

\begin{proof}[Proof of \Cref{thm:4}] 
The proof follows essentially the same arguments as the proof of \Cref{thm:3}. 
The key observation is that the expression \eqref{eq:soln-order1-high-dim} is independent of $x_{3},\cdots,x_{d}$. Consequently, the main result can be established without invoking Federer's dimension reduction argument.
\end{proof}

\appendix 
\crefalias{section}{appendix}

\section{Preliminaries on elastic scattering theory\label{appendix}}

For the reader's convenience, we summarize here some preliminaries on elastic scattering theory. For further details, we refer to \cite{Cha02elasticITP}, as well as the classical monograph \cite{KGBB79elastic} and related works \cite{BP08elastic,CGK02LSM,CKAGK07Factorization,KW21CharacterizeNonradiating}.

We begin with a consequence of the standard Helmholtz decomposition, see e.g.\ \cite[Theorem~III.2.2.5, page~123]{KGBB79elastic} or \cite[Lemma~2.1]{KW21CharacterizeNonradiating}. 
In any dimension $d \geq 2$, we define the curl of a vector field $\bfu$ as the skew-symmetric matrix 
\begin{equation*}
(\curl\bfu)_{ij} = \frac{1}{\sqrt{2}}(\partial_{i}u_{j}-\partial_{j}u_{i}). 
\end{equation*}

\begin{lemma}\label{lem:decompose}
Let $\Omega\subset\mR^{d}$ be an open set with $d\ge 2$, and let $\mu>0$ and $\lambda+2\mu>0$. If $u$ is a (smooth) solution to 
\begin{equation}
(\mL^{\lambda,\mu}+\kappa^{2})u=0 \quad \text{in $\Omega$,} \label{eq:Kupradze1}
\end{equation}
for some $\kappa > 0$, then $u$ can be decomposed into compression and shear components $u=u^{(p)}+u^{(s)}$, where 
\begin{equation*}
\left\{\begin{aligned}
& (\Delta + \kappa_{p}^{2})u^{(p)} = 0 ,&& \curl\, u^{(p)} = 0 \\ 
& (\Delta + \kappa_{s}^{2})u^{(s)} = 0 ,&& \nabla\cdot u^{(s)} = 0
\end{aligned}\right.
\end{equation*}
in $\Omega$, where $\kappa_{p}^{2}=\kappa^{2}/(\lambda+2\mu)$ and $\kappa_{s}^{2} = \kappa^{2}/\mu$. 
\end{lemma}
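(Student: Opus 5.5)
The plan is the classical Helmholtz decomposition, done directly at the level of the PDE. Since $\mu>0$ and $\lambda+2\mu>0$, we may divide by $\kappa^{2}$. Apply the operator $\mL^{\lambda,\mu}=\mu\Delta+(\lambda+\mu)\nabla(\nabla\cdot)$ to \eqref{eq:Kupradze1} and use $\mL^{\lambda,\mu}u=-\kappa^{2}u$. For smooth $u$ we have the identity $\Delta u=\nabla(\nabla\cdot u)-\sqrt{2}\,\nabla\cdot(\curl u)$ (up to the sign and normalization convention for the matrix curl). Equivalently, $\mL^{\lambda,\mu}u=(\lambda+2\mu)\nabla(\nabla\cdot u)-\mu\,(\Delta u-\nabla(\nabla\cdot u))$.

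\emph{Step 1: divergence and curl.} Taking the divergence of \eqref{eq:Kupradze1} gives $(\lambda+2\mu)\Delta(\nabla\cdot u)+\kappa^{2}\nabla\cdot u=0$, that is, $(\Delta+\kappa_{p}^{2})(\nabla\cdot u)=0$. Taking the antisymmetrized derivative $\partial_{i}(\cdot)_{j}-\partial_{j}(\cdot)_{i}$ kills the gradient term, so $(\Delta+\kappa_{s}^{2})\curl u=0$.

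\emph{Step 2: define the pieces.} Set
\begin{equation*}
u^{(p)}:=-\kappa_{p}^{-2}\nabla(\nabla\cdot u),\qquad u^{(s)}:=u-u^{(p)}.
\end{equation*}
Then $\curl u^{(p)}=0$ trivially. By Step 1,
\begin{equation*}
(\Delta+\kappa_{p}^{2})u^{(p)}=-\kappa_{p}^{-2}\nabla\bigl((\Delta+\kappa_{p}^{2})\nabla\cdot u\bigr)=0.
\end{equation*}
For $u^{(s)}$, compute its divergence: $\nabla\cdot u^{(s)}=\nabla\cdot u+\kappa_{p}^{-2}\Delta(\nabla\cdot u)=\kappa_{p}^{-2}(\Delta+\kappa_{p}^{2})\nabla\cdot u=0$.

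\emph{Step 3: Helmholtz equation for $u^{(s)}$.} Rewrite \eqref{eq:Kupradze1} as $\mu\Delta u+\kappa^{2}u=-(\lambda+\mu)\nabla(\nabla\cdot u)$. Then
\begin{equation*}
\mu(\Delta+\kappa_{s}^{2})u^{(s)}=\mu\Delta u+\kappa^{2}u-\mu(\Delta+\kappa_{s}^{2})u^{(p)}.
\end{equation*}
Since $\Delta u^{(p)}=-\kappa_{p}^{2}u^{(p)}=\nabla(\nabla\cdot u)$, the last term equals $\mu(-\kappa_{p}^{2}+\kappa_{s}^{2})u^{(p)}$. Using $\kappa^{2}=\mu\kappa_{s}^{2}=(\lambda+2\mu)\kappa_{p}^{2}$, this becomes
\begin{equation*}
(\mu\kappa_{s}^{2}-\mu\kappa_{p}^{2})\kappa_{p}^{-2}\nabla(\nabla\cdot u)=(\lambda+2\mu-\mu)\nabla(\nabla\cdot u)=(\lambda+\mu)\nabla(\nabla\cdot u).
\end{equation*}
Hence $\mu(\Delta+\kappa_{s}^{2})u^{(s)}=-(\lambda+\mu)\nabla(\nabla\cdot u)+(\lambda+\mu)\nabla(\nabla\cdot u)=0$, which is what we need.

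The only delicate point is bookkeeping in Step 3, in particular the sign conventions. Notably, the curl computation from Step 1 is not actually needed once $u^{(p)}$ is defined explicitly. Smoothness of $u$ holds by interior elliptic regularity, so all derivatives are classical.
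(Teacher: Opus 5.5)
Your proof is correct and is essentially the paper's argument: your $u^{(p)}=-\kappa_p^{-2}\nabla(\nabla\cdot u)$ is exactly the paper's $-\nabla\psi$ with $\psi=\kappa_p^{-2}\nabla\cdot u$, and your divergence and Helmholtz checks for $u^{(s)}$ are the same computation rearranged. There are only cosmetic slips, none of which affect the conclusion. In Step~3, $\mu(\Delta+\kappa_s^2)u^{(p)}$ equals $-(\lambda+\mu)\nabla(\nabla\cdot u)$; you dropped the sign of $u^{(p)}$ in the middle line, though your final line is right. Also, the unused identity in your opening paragraph should read $\mL^{\lambda,\mu}u=(\lambda+2\mu)\nabla(\nabla\cdot u)+\mu(\Delta u-\nabla(\nabla\cdot u))$.
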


\begin{remark*}
The Helmholtz decomposition states that every vector field $v\in(L^{q}(\Omega))^{d}$, $1<q<\infty$, admits a decomposition $v=v^{(p)}+v^{(s)}$ where $v^{(p)},v^{(s)}\in (L^{q}(\Omega))^{d}$ satisfies $\curl\, v^{(p)}=0$ and $\nabla\cdot v^{(s)} = 0$, see, for example, \cite{FKS07Helmholtz} and the references therein. However, the Helmholtz decomposition generally fails when $\Omega\subset\mR^{2}$ is an infinite cone with `smooth vertex' at the origin and of opening angle larger than $\pi$, see the counterexamples in \cite{Bogovskii1986Helmholtz,MB1986Helmholtz}. The Hodge theorem gives similar decomposition results for differential $k$-forms on oriented compact $d$-dimensional smooth manifolds, see, for example, \cite{Malakhaltsev2008DeRham} and the references therein. 
\end{remark*}

\begin{proof}[Proof of \Cref{lem:decompose}]
First, taking the divergence of \eqref{eq:Kupradze1}, we find that the function $\psi:=\kappa_{p}^{-2}\nabla\cdot u$ satisfies 
\begin{equation*}
(\Delta + \kappa_{p}^{2}) \psi=0 \quad \text{in $\Omega$.}
\end{equation*}
Define $u^{(p)}:=-\nabla\psi$ and $u^{(s)}:=u-u^{(p)}$. 
Then $\curl\, u^{(p)}=0$, and 
\begin{equation*}
\nabla\cdot u^{(s)} = \nabla\cdot u + \Delta\psi = \kappa_{p}^{2}\psi + \Delta\psi = 0 \quad \text{in $\Omega$.} 
\end{equation*}
Finally, we compute 
\begin{equation*}
\begin{aligned}
& \Delta u^{(s)} = \Delta u - \Delta u^{(p)} = -\frac{\lambda+\mu}{\mu}\nabla(\nabla\cdot u) - \kappa_{s}^{2}u +\kappa_{p}^{2} u^{(p)} \\
&\quad = -\frac{\lambda+\mu}{\mu}\kappa_{p}^{2}\nabla\psi - \kappa_{p}^{2} \nabla\psi - \kappa_{s}^{2}u = -\frac{\lambda+2\mu}{\mu}\kappa_{p}^{2}\nabla\psi - \kappa_{s}^{2}u \\
&\quad = \kappa_{s}^{2} u^{(p)} - \kappa_{s}^{2}u = -\kappa_{s}^{2}u^{(s)} \quad \text{in $\Omega$}, 
\end{aligned}
\end{equation*}
which completes the proof. 
\end{proof}

\begin{remark}[curl-curl identity and the uniqueness of the Helmholtz decomposition]\label{rem:Helmholtz-decomposition}
We define the formal transpose of the curl operator by 
\begin{equation*}
(\curl^{\intercal} A)_{i} := \frac{1}{\sqrt{2}}\sum_{j}\partial_{j}(A_{ij}-A_{ji})
\end{equation*}
for any matrix field $A=(A_{ij})$. With these definitions, the following curl-curl identity holds: 
\begin{equation}
\Delta\bfu = \nabla(\nabla\cdot\bfu) - \curl^{\intercal}\curl\,\bfu \label{eq:curl-curl}
\end{equation}
see, for instance, \cite[Section~3]{IKS23UCPMRT} for a generalization to symmetric tensors via Saint-Venant operators. See also \cite{KL19LandisNS,MWZ20LandisNS} for an application to the stationary Navier-Stokes equations, and \cite{FW21magnetohydrodynamic} for an application to the stationary magnetohydrodynamic equations. As an immediate consequence, the decomposition in \Cref{lem:decompose} is unique.
\end{remark}

We are now ready to give a precise definition of the Kupradze radiation condition, following \cite[Definition~III.2.2.6, p.~124]{KGBB79elastic}: 

\begin{definition} \label{def:Kupradze}
Let $R>0$, $\kappa>0$ and assume $\mu>0$ and $\lambda+2\mu>0$. A solution $u$ to \eqref{eq:Kupradze1} in $\Omega=\mR^{d}\setminus B_{R}$ is said to satisfy the \emph{Kupradze radiation condition at infinity} if 
\begin{equation*}
\begin{aligned}
& \lim_{\abs{x}\rightarrow\infty} u^{(p)}(x)=0 , && \lim_{\abs{x}\rightarrow\infty} \abs{x}^{\frac{d-1}{2}}\left( \partial_{\abs{x}}u^{(p)}(x) - \bfi \kappa_{p} u^{(p)}(x) \right) = 0, \\ 
& \lim_{\abs{x}\rightarrow\infty} u^{(s)}(x)=0 , && \lim_{\abs{x}\rightarrow\infty} \abs{x}^{\frac{d-1}{2}}\left( \partial_{\abs{x}}u^{(s)}(x) - \bfi \kappa_{s} u^{(s)}(x) \right) = 0, 
\end{aligned}
\end{equation*}
where $\partial_{\abs{x}}=\hat{x}\cdot\nabla$. 
\end{definition}

\begin{remark}[Far-field pattern] 
The far-field patterns of $u^{(p)}$ and $u^{(s)}$ are defined by 
\begin{equation*}
\begin{aligned}
u^{\infty,(p)}(\hat{x}) &:= \lim_{\abs{x}\rightarrow\infty}\gamma_{d,\kappa_{p}}^{-1}\abs{x}^{\frac{d-1}{2}}e^{-\bfi\kappa_{p}\abs{x}}u^{(p)}(x), \\
u^{\infty,(s)}(\hat{x}) &:= \lim_{\abs{x}\rightarrow\infty}\gamma_{d,\kappa_{s}}^{-1}\abs{x}^{\frac{d-1}{2}}e^{-\bfi\kappa_{s}\abs{x}}u^{(s)}(x), 
\end{aligned}
\end{equation*}
for all $\hat{x}=x/\abs{x}\in\calS^{d-1}$, respectively, where we choose $\gamma_{d,\kappa}:=\frac{e^{-\bfi\pi\frac{d-3}{4}}}{2(2\pi)^{\frac{d-1}{2}}}\kappa^{\frac{d-3}{2}}$ with $\kappa=\kappa_{p},\kappa_{s}$ as in \cite[Section~1.2.3]{Yaf10ScatteringAnalyticTheory}, see also \cite[Section~2]{KSS23Minimization}. By the Rellich uniqueness theorem \cite{CK19scattering,Hormander_rellich} and the uniqueness of the Helmholtz decomposition (cf.\ \Cref{rem:Helmholtz-decomposition}), we have that $u=0$ in $\Omega$ if and only if $(u^{\infty,(p)},u^{\infty,(s)})=(0,0)$. 
\end{remark}

\section*{Acknowledgments}

Kow was supported by the National Science and Technology Council of Taiwan, NSTC 112-2115-M-004-004-MY3 and NSTC 115-2628-M-004-001, and by the National Center for Theoretical Sciences of Taiwan. 
Salo was partly supported by the Research Council of Finland (Centre of Excellence in Inverse Modelling and Imaging and FAME Flagship, grants 353091 and 359208). 
Shahgholian was supported by Swedish Research Council (grant no. 2025-03740).

\section*{Declarations}

\noindent {\bf  AI disclosure:} GPT 5.6 was used for verifying certain computations and for proofreading. This article does not contain AI-written text.

\medskip

\noindent {\bf  Data availability statement:} All data needed are contained in the manuscript.

\medskip
\noindent {\bf  Funding and/or Conflicts of interests/Competing interests:} The authors declare that there are no financial, competing or conflict of interests. 

\end{sloppypar}

\bibliographystyle{custom}
\bibliography{ref}
\end{document}